\newtheorem{theorem}{Theorem}[section]
\newcommand{\MakeTheoremAndCounter}[2]{\newaliascnt{#1}{theorem}
\newtheorem{#1}[#1]{#2}
\aliascntresetthe{#1}
\expandafter\providecommand\csname#1autorefname\endcsname{#2}}
\newcommand{\thistheoremname}{}
\newtheorem*{genericthm*}{\thistheoremname}
\newenvironment{namedthm*}[1]
{\renewcommand{\thistheoremname}{#1}%
	\begin{genericthm*}}
	{\end{genericthm*}}
\theoremstyle{definition}
\newtheorem*{remark}{Remark}
\numberwithin{equation}{section}
\newcommand{\HomeoId}{\Homeo([0,1]^d)}
\DeclareMathOperator{\inter}{int}
\DeclareMathOperator{\range}{range}
\DeclareMathOperator{\length}{length}
\DeclareMathOperator{\Lip}{Lip}
\DeclareMathOperator{\graph}{graph}
\DeclareMathOperator{\pr}{pr}
\DeclareMathOperator{\Homeo}{Homeo}
\DeclareMathOperator{\id}{id}
\DeclareMathOperator{\diam}{diam}
\newcommand{\R}{\mathbb{R}}
\newcommand{\N}{\mathbb{N}}
\newcommand{\eps}{\varepsilon}
\newcommand{\iB}{\mathcal{B}}
\newcommand{\iK}{\mathcal{K}}
\newcommand{\iI}{\mathcal{I}}
\newcommand{\iF}{\mathcal{F}}
\newcommand{\iG}{\mathcal{G}}
\newcommand{\iH}{\mathcal{H}}
\newcommand{\defeq}{\stackrel{\text{def}}{=}}
\newcommand{\titleofthepaper}{Singularity of maps of several variables and a problem of Mycielski concerning prevalent homeomorphisms}
\begin{document}
\title[Singularity of functions of several variables]{\titleofthepaper}

\author{Rich\'ard Balka}
\address{Alfr\'ed R\'enyi Institute of Mathematics, Re\'altanoda u.~13--15, H-1053 Budapest, Hungary}
\email{balka.richard@renyi.hu}

\author{M\'arton Elekes}
\address{Alfr\'ed R\'enyi Institute of Mathematics, Re\'altanoda u.~13--15, H-1053 Budapest, Hungary AND E\"otv\"os Lor\'and University, Institute of Mathematics, P\'azm\'any P\'eter s. 1/c, 1117 Budapest, Hungary}
\email{elekes.marton@renyi.hu}
\urladdr{http://www.renyi.hu/$\sim$emarci}

\author{Viktor Kiss}
\address{Alfr\'ed R\'enyi Institute of Mathematics, Re\'altanoda u.~13--15, H-1053 Budapest, Hungary}
\email{kiss.viktor@renyi.hu}

\author{M\'ark Po\'or}
\address{E\"otv\"os Lor\'and University, Institute of Mathematics, P\'azm\'any P\'eter s.~1/c, 1117 Budapest, Hungary, and Einstein Institute of Mathematics Edmond J. Safra Campus, Givat Ram The Hebrew University of Jerusalem, 91904 Jerusalem, Israel}
\email{sokmark@gmail.com}

\thanks{The authors were supported by the National Research, Development and Innovation Office -- NKFIH, grants no.~113047, 129211 and 124749. The first author was supported by the MTA Premium Postdoctoral Research Program. The third author was supported by the National Research, Development and Innovation Office -- NKFIH, grant no.~128273.\\
\includegraphics[height=1cm]{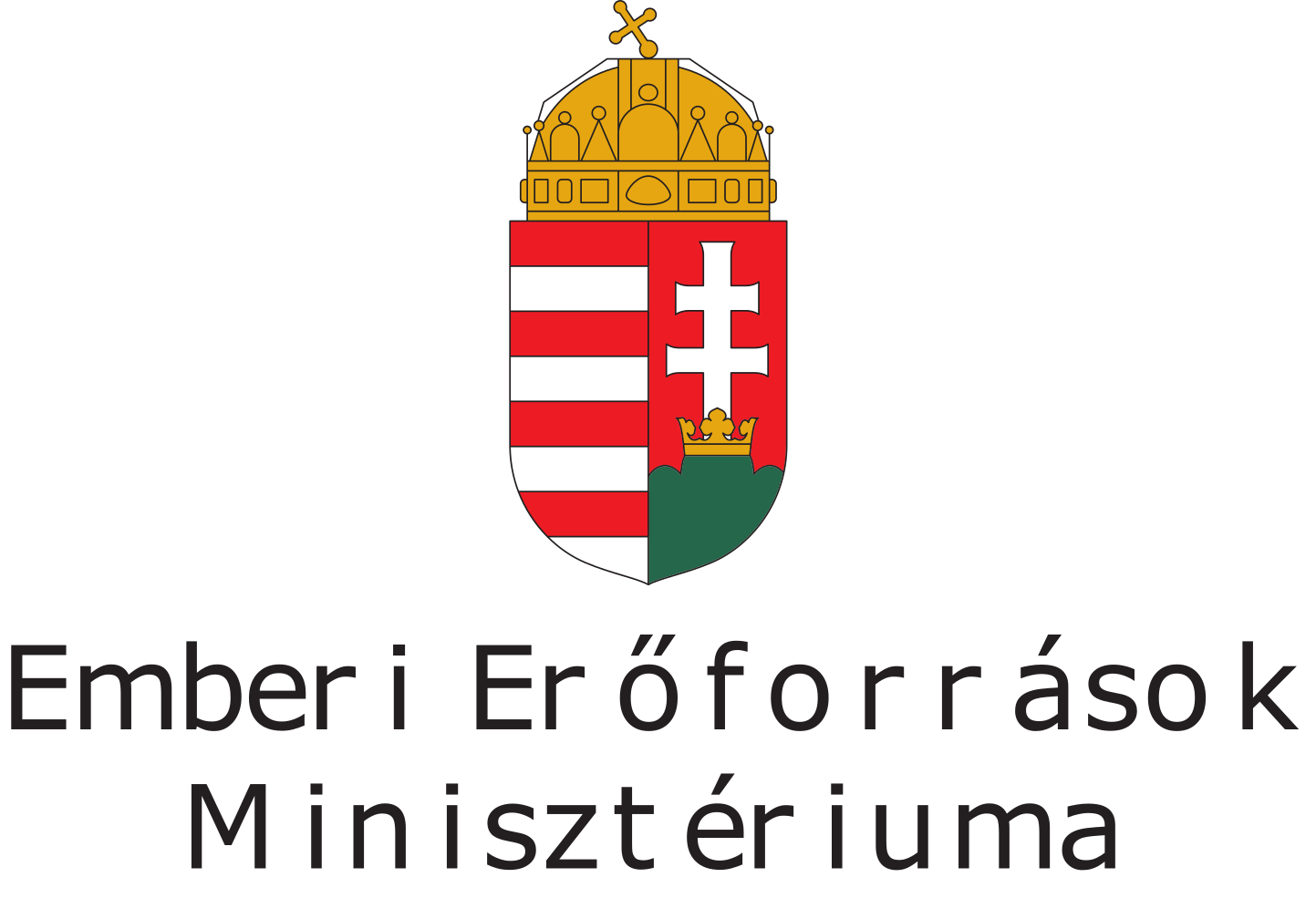} \raisebox{0.5cm}{\parbox[c]{11cm}{The fourth author was supported through the New National Excellence Program of the Ministry of Human Capacities.}}}

\subjclass[2010]{Primary 28A75, 28C10; Secondary 46E15, 54E52, 57S05, 60B05}

\keywords{homeomorphism, Haar null, prevalent, generic, typical, singular map}

\begin{abstract}
S.~Banach pointed out that the graph of the generic (in the sense of Baire category) element of $\Homeo([0,1])$ has length $2$. J.~Mycielski asked if the measure theoretic dual holds, i.e., if the graph of all but Haar null many (in the sense of Christensen) elements of $\Homeo([0,1])$ have length $2$. We answer this question in the affirmative. 

We call $f \in \HomeoId$ singular if it takes a suitable set of full measure to a nullset, and strongly singular if it is almost everywhere differentiable with singular derivative matrix. 
Since the graph of $f \in \Homeo([0,1])$ has length $2$ iff $f$ is singular iff $f$ is strongly singular, the following results are the higher dimensional analogues of Banach's observation and our solution to Mycielski's problem.

We show that for $d \ge 2$ the graph of the generic element of $\Homeo([0,1]^d)$ has infinite $d$-dimensional Hausdorff measure, contrasting the above result of Banach. The measure theoretic dual remains open, but we show that the set of elements of $\Homeo([0,1]^d)$ with infinite $d$-dimensional Hausdorff measure is not Haar null. We show that for $d \ge 2$ the generic element of $\Homeo([0,1]^d)$ is singular but not strongly singular. We also show that for $d \ge 2$ almost every element of $\Homeo([0,1]^d)$ is singular, but the set of strongly singular elements form a so called Haar ambivalent set (neither Haar null, nor co-Haar null).

Finally, in order to clarify the situation, we investigate the various possible definitions of singularity for maps of several variables, and explore the connections between them.
\end{abstract}

\maketitle

\tableofcontents

\section{Introduction}

Let $\HomeoId$ denote the group of homeomorphisms of the $d$-dimensional cube $[0,1]^d$, and let us equip this group with the maximum metric. It is well known that this is a Polish group \cite[9.B.8]{Ke}, i.e., a completely metrizable separable topological group, hence we can apply Baire category arguments. (Note that the maximum metric itself is not complete, but instead the topology it generates is completely metrizable.)

According to J.~Mycielski~\cite{My}, the following interesting but easy fact is due to S.~Banach.

\begin{fact}[S.~Banach]
\label{f:Banach}
The graph of the generic (in the sense of Baire category) element of $\Homeo([0,1])$ is of length $2$. 
\end{fact}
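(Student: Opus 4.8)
The plan is to obtain the generic statement from a countable intersection of open dense sets. For $n\in\N$ put
\[
A_n=\{f\in\Homeo([0,1]) : \length(\graph f)>2-1/n\}.
\]
I will show that each $A_n$ is open and dense; then $\bigcap_n A_n$ is a dense $G_\delta$ set, and every $f$ in it satisfies $\length(\graph f)\ge 2$. For the matching upper bound, note that every $f\in\Homeo([0,1])$ is strictly monotone, so for any partition $0=x_0<\dots<x_k=1$ the triangle inequality gives $\sum_i\sqrt{(x_{i+1}-x_i)^2+(f(x_{i+1})-f(x_i))^2}\le\sum_i(x_{i+1}-x_i)+\sum_i|f(x_{i+1})-f(x_i)|=1+1=2$, the second sum telescoping by monotonicity. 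Hence $\length(\graph f)\le 2$ for \emph{every} $f$, so the generic value is exactly $2$.

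Openness of $A_n$ is just lower semicontinuity of the length functional: for each fixed partition $P$ the polygonal sum $\ell_P(f)$ appearing above depends continuously on $f$ in the maximum metric, and $\length(\graph f)=\sup_P\ell_P(f)$ is a supremum of continuous functions, so its superlevel sets $A_n$ are open.

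For density, fix $f\in\Homeo([0,1])$ and $\eps>0$; by symmetry (replace $f$ by $1-f$) I may assume $f$ is increasing. Using uniform continuity, choose a partition $0=a_0<\dots<a_N=1$ with $a_{j+1}-a_j<\eps$ and $f(a_{j+1})-f(a_j)<\eps$ for all $j$, and fix $\eta>0$ so small that $\eta<\min_j(a_{j+1}-a_j)$, $\eta<\min_j(f(a_{j+1})-f(a_j))$, and $2N\eta<1/n$. Let $g\in\Homeo([0,1])$ be the piecewise linear map that on each $[a_j,a_{j+1}]$ first rises slowly from $f(a_j)$ to $f(a_j)+\eta$ on $[a_j,a_{j+1}-\eta]$ and then steeply from $f(a_j)+\eta$ to $f(a_{j+1})$ on $[a_{j+1}-\eta,a_{j+1}]$; one checks that $g$ is a strictly increasing homeomorphism fixing $0$ and $1$. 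Since on each strip $[a_j,a_{j+1}]$ both $f$ and $g$ take values in $[f(a_j),f(a_{j+1})]$, an interval of length $<\eps$, we get $\|f-g\|_\infty<\eps$. Finally, over $[a_j,a_{j+1}-\eta]$ the graph of $g$ has length at least its horizontal extent $a_{j+1}-a_j-\eta$, while over $[a_{j+1}-\eta,a_{j+1}]$ it is a segment of length at least its vertical extent $f(a_{j+1})-f(a_j)-\eta$; summing over $j$ yields $\length(\graph g)\ge(1-N\eta)+(1-N\eta)=2-2N\eta>2-1/n$, so $g\in A_n$. This shows $A_n$ is dense and completes the argument.

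The argument is elementary, consistent with the remark that the fact is easy; the only points requiring a little care are the lower semicontinuity of length (routine) and arranging the zig-zag approximant $g$ to be a genuine homeomorphism with the correct endpoints while remaining uniformly close to $f$ — which is exactly what the thin-strip observation above provides. I do not anticipate a substantial obstacle.
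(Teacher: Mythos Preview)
Your proof is correct. The paper does not actually give a direct proof of this fact: it is stated as a known observation attributed to Banach (via Mycielski), with no argument supplied. Your approach --- the universal upper bound $\length(\graph f)\le 2$ from monotonicity, lower semicontinuity of length (as a supremum of continuous polygonal-sum functionals) for openness of $A_n$, and a piecewise-linear staircase approximant for density --- is the natural elementary proof and is presumably close to what Banach had in mind. The details all check: the conditions on $\eta$ guarantee both pieces of $g$ on each $[a_j,a_{j+1}]$ have positive slope, and the thin-strip observation gives the uniform estimate.

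The paper's later machinery does furnish an alternative, more circuitous route: Theorem~\ref{t:gensing1} (via Theorem~\ref{t:gen_sing}) shows that the generic $f\in\Homeo([0,1])$ is singular, and Theorem~\ref{t:length} establishes that for $f\in\Homeo([0,1])$ singularity is equivalent to $\length(\graph f)=2$. That path is more structural --- it situates the length-$2$ phenomenon inside the singularity framework that drives the whole paper --- but it is considerably longer than your self-contained argument. Your proof is exactly the kind of short Baire-category computation one would expect for a result the paper calls ``easy.''
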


Length here can be considered as the arclength of the planar curve $x \mapsto (x, f(x))$, but it is well known that for injective continuous curves arclength of a curve is the same as the 1-dimensional Hausdorff measure of the range of the curve, i.e., in this case $\mathcal{H}^1(\graph(f))$. (See the next section for the definitions.)

Mycielski \cite{My} raised the question if the measure theoretic dual of Fact~\ref{f:Banach} holds. However, since $\Homeo([0,1])$ carries no natural invariant measure, first we need to clarify what he meant by almost every homeomorphism. The following notion was  introduced by J.~P.~R.~Christensen \cite{Ch}.

\begin{definition}[J.~P.~R.~Christensen] 
A subset $X$ of a Polish group $G$ is \emph{Haar null} if there exists a Borel probability measure $\mu$ and a Borel set $B$ containing $X$ such that $\mu(gBh)=0$ for every $g,h \in G$. The complement of a Haar null set is called \emph{prevalent}.
\end{definition}

Christensen proved that Haar null sets form a proper $\sigma$-ideal which coincides with the family of sets of Haar measure zero if $G$ is locally compact. This notion turned out to be very useful in various branches of mathematics, see e.g.~the survey paper \cite{EN}. 

Now we are able to formulate Mycielski's question \cite{My}.

\begin{question}[J.~Mycielski]
\label{q:Mycielski} What is the length of the graph of the prevalent element of $\Homeo([0,1])$?
\end{question}

One of the goals of the present paper is to answer this question. 

\begin{namedthm*}{Corollary~\ref{c:Mycielski}}
The graph of the prevalent $f\in \Homeo([0, 1])$ is of length $2$.
\end{namedthm*}

It turns out that this result is closely related to the following notions. 

\begin{definition} We say that $f\in \HomeoId$ is \emph{singular} if there exists a Borel set $F \subseteq [0,1]^d$ such that $\lambda^d(F)=1$ and $\lambda^d(f(F)) = 0$, and  \emph{strongly singular} if $f$ is differentiable at $x$ and $\det f'(x) = 0$ for almost every $x \in [0,1]^d$. 
\end{definition} 

In Theorem \ref{t:length} below we will prove that the graph of $f \in \Homeo([0,1])$ has length $2$ iff $f$ is singular iff $f$ is strongly singular. Therefore the following problems are all natural generalizations of Banach's observation and Mycielski's problem.

\begin{question}
\label{q:Hd}
Let $d \ge 2$ be an integer. What is the $d$-dimensional Hausdorff measure of the graph of the generic/prevalent element of $\Homeo([0,1]^d)$?
\end{question}

\begin{question}
\label{q:sing}
Let $d \ge 2$ be an integer. Is the generic/prevalent element of $\Homeo([0,1]^d)$ singular? 
\end{question}

\begin{question}
\label{q:strong_sing}
Let $d \ge 2$ be an integer. Is the generic/prevalent element of $\Homeo([0,1]^d)$ strongly singular?
\end{question}

In Theorem~\ref{t:inf} we will answer Question \ref{q:Hd} in the generic case by showing that for $d \ge 2$ the graph of the generic element of $\Homeo([0,1]^d)$ has infinite $d$-dimensional Hausdorff measure, contrasting the above result of Banach. The prevalent case remains open, but we show in Theorem~\ref{t:ccinf} the partial result that the set of elements of $\Homeo([0,1]^d)$ with infinite $d$-dimensional Hausdorff measure is not Haar null. We also answer the remaining two questions in Sections \ref{s:gen_sing} and \ref{s:prev_sing}: We show that for $d \ge 2$ the generic element of $\Homeo([0,1]^d)$ is singular but not strongly singular, and also that for $d \ge 2$ the prevalent element of $\Homeo([0,1]^d)$ is singular, but the set of strongly singular elements form a so called Haar ambivalent set (neither Haar null, nor prevalent).

Moreover, in Section~\ref{s:sing}, in order to clarify the situation, we investigate the various possible definitions of singularity for maps of several variables, and explore the connections between them.

Finally, in the Appendix we prove an approximation result communicated to us by J.~Luukkainen stating that the set of somewhere smooth homeomorphisms is dense in $\Homeo([0,1]^d)$. These types of problems are really delicate, dimension $4$ is particularly difficult, since for example there exists a homeomorphism $f$ of $(0,1)^4$ into $\R^4$ that cannot be uniformly approximated even by bi-Lipschitz homeomorphisms, let alone smooth ones \cite[page~183]{DS}. Moreover, J.~Luukkainen \cite{Lu} pointed out to us that we can even find such an $f\in \Homeo([0,1]^4)$. 

We note here that several problems we consider in this paper have well-known analogues for continuous maps instead of homeomorphisms. For example it is classical that the generic $f\in C([0, 1]^d, \R^d)$ is nowhere differentiable, (the case of $d=1$ dates back to Banach \cite{B}, for the general case see e.g.~Claim~\ref{c:dif2}), hence not strongly singular, but singular (see Theorem~\ref{t:versions of singularity}). The prevalent $f\in C([0,1],\R)$ is nowhere differentiable \cite{H}, hence not strongly singular, and also not singular, since quite the opposite holds, namely, the occupation measure $\lambda\circ f^{-1}$ is absolutely continuous \cite[Theorem~4.2]{BDE}. (It will be shown in Theorem~\ref{t:versions of singularity} that $f$ is singular iff the occupation measure is a singular measure.)
But it should be noted that the case of homeomorphisms is very different and much more difficult. First, as we have just mentioned above, the problem of approximating homeomorphism with more regular homeomorphisms is very subtle. Second, $\HomeoId$ is not commutative,
and the theory of Haar null sets is notoriously complex for non-abelian groups (the group structure on $C([0, 1]^d, \R^d)$ is pointwise addition, hence commutative).

As our final remark, we mention that there is a completely different approach to define random homeomorphisms, see Graf, Mauldin, and Williams~\cite{GMW}.

\section{Preliminaries and notations}

Let $G$ be a Polish group. A set $A\subseteq G$ is called \emph{compact catcher} if for every compact set $K\subseteq G$ there exist $g,h\in G$ such that $gKh\subseteq A$. The following fact is well known, see e.g.~\cite[Lemma~6.6.1]{EN}. 

\begin{fact} If $A$ is compact catcher then it is not Haar null.
\end{fact}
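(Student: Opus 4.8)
The plan is to argue by contradiction, exploiting tightness of Borel probability measures on Polish spaces. Suppose $A$ is Haar null. Then by definition there is a Borel probability measure $\mu$ on $G$ and a Borel set $B$ with $A \subseteq B$ and $\mu(gBh) = 0$ for all $g, h \in G$. Since $G$ is a Polish space, $\mu$ is automatically tight (this is Ulam's theorem), so for every $\eps > 0$ there is a compact set $K \subseteq G$ with $\mu(K) > 1 - \eps$; in particular we may fix a compact $K \subseteq G$ with $\mu(K) > 0$.

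Next I would feed this particular compact set $K$ into the compact catcher hypothesis: there exist $g, h \in G$ such that $gKh \subseteq A \subseteq B$, which rearranges to $K \subseteq g^{-1} B h^{-1}$. Because left and right translations $x \mapsto g^{-1} x h^{-1}$ are homeomorphisms of $G$, the set $g^{-1} B h^{-1}$ is again Borel, so $\mu$ evaluates on it, and by monotonicity $\mu(g^{-1} B h^{-1}) \ge \mu(K) > 0$. This contradicts the assumption that $\mu$ vanishes on every two-sided translate of $B$, so $A$ cannot be Haar null.

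There is essentially no serious obstacle here; the one point deserving explicit mention is the use of tightness, since the definition of Haar null only requires the witnessing measure $\mu$ to be Borel, not Radon a priori — but on a Polish space every Borel probability measure is inner regular with respect to compact sets, which is exactly what is needed to produce the compact set $K$ of positive measure.
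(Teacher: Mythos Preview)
Your argument is correct and is the standard proof of this well-known fact. The paper does not actually supply a proof of this statement; it merely records it as known and cites \cite[Lemma~6.6.1]{EN}, so there is nothing to compare against beyond noting that your tightness-plus-translation argument is exactly the usual one.
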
 

We use $\lambda^d$ to denote the $d$-dimensional Lebesgue measure on $\mathbb{R}^d$, and we simply write $\lambda$ instead of $\lambda^1$. Let $|\cdot|$ stand for the Euclidean norm on $\R^d$. Let $B(x,r)$ and $U(x,r)$ denote the closed and open balls of radius $r$ around $x$, respectively. We say that $K\subseteq \R^d$ is a \emph{Cantor set} if it is perfect and totally disconnected. For a set $A$ let $\diam A$, $\partial A$, and $\inter A$ stand for the diameter, the boundary, and the interior of $A$, respectively. For a sequence of sets $\{A_n\}_{n\geq 1}$ let 
\begin{equation*}
\liminf_n A_n=\bigcup_{n=1}^{\infty} \bigcap_{k=n}^{\infty} A_k     \quad \text{and} \quad \limsup_n A_n=\bigcap_{n=1}^{\infty}  \bigcup_{k=n}^{\infty} A_k.
\end{equation*}
Let $\HomeoId$ denote the group of homeomorphisms of the $d$-dimensional cube $[0,1]^d$ (under composition, as the group operation), and let us equip this group with the maximum metric. It is well known that this is a Polish group \cite[9.B.8]{Ke}, i.e., a completely metrizable separable topological group. Note that the maximum metric itself is not complete, but instead the topology it generates can also be generated by the complete metric $\varrho(f, g)=\|f -g\| + \|f^{-1} - g^{-1}\|$, where $\|\cdot \|$ denotes the maximum norm. We will also often work with the Polish space of continuous functions $f\colon [0,1]^d\to \R^d$ endowed with the maximum metric as well. Let $B(f,r)$ denote the closed ball of radius $r$ around $f$ with respect to the maximum metric.

We use $\mathcal{B}(X)$ to denote the Borel sets of a topological space $X$ and $\mathbf{P}(X)$ to denote the space of the Borel probability measures on a separable metrizable space $X$. The classical result \cite[Theorem~17.19]{Ke} states that $\mathbf{P}(X)$ is also a separable metrizable space when equipped with the weak topology.

For $A\subseteq \mathbb{R}^{d}$ and $s \ge 0$ we define
\emph{$s$-dimensional Hausdorff measure} as
\begin{align*}
\mathcal{H}^{s}(A)&=\lim_{\delta\downarrow  0}\mathcal{H}^{s}_{\delta}(A),
\text{ where}\\
\mathcal{H}^{s}_{\delta}(A)&=\inf \left\{ \sum_{i=1}^\infty (\diam
A_{i})^{s}: A \subseteq \bigcup_{i=1}^{\infty} A_{i},~
\forall i \diam A_i \le \delta \right\}.
\end{align*}

The \emph{length of a curve} $ \gamma \colon [0,1]\to \mathbb{R}^{d}$ is defined as
\begin{equation*} \length (\gamma)=\sup \left\{\sum_{i=1}^{n} |\gamma(x_{i})-\gamma(x_{i-1})|:
n\in \mathbb{N}^{+},~ 0=x_0<\dots<x_n=1 \right\}.
\end{equation*}

The 
\emph{length of the graph} of a function $f \colon [0,1] \to [0, 1]$ is defined as 
\begin{equation*}
\length(\graph(f)) = \length(\gamma),
\end{equation*}
where $\gamma \colon [0, 1] \to [0, 1]^2$ is defined as $\gamma(x) = (x, f(x))$. Since $\gamma$ is one-to-one, the following well-known fact follows e.g.~from \cite[Theorem~2.6.2]{BBI}. 
\begin{fact} \label{f:wk} 
For each continuous function $f\colon [0,1]\to [0,1]$ we have 
\begin{equation*}
\length(\graph(f))=\iH^1(\graph(f)).    
\end{equation*}
\end{fact}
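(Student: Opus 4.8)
The plan is to prove the two inequalities separately, using that $\gamma(x)=(x,f(x))$ is a continuous injection of $[0,1]$ onto $\graph(f)$; this is the classical identification of the length of an injective curve with the $\iH^1$-measure of its range.

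For $\iH^1(\graph(f))\le\length(\graph(f))$ I would first note that the case $\length(\gamma)=\infty$ is trivial, and otherwise set $L\defeq\length(\gamma)<\infty$. Given $\delta>0$, uniform continuity of $\gamma$ yields a partition $0=x_0<\dots<x_n=1$ with $\diam\gamma([x_{i-1},x_i])\le\delta$ for all $i$; since the arcs $\gamma([x_{i-1},x_i])$ cover $\graph(f)$ and $\diam\gamma([x_{i-1},x_i])\le\length(\gamma|_{[x_{i-1},x_i]})$, additivity of length over subintervals gives $\iH^1_\delta(\graph(f))\le L$, and letting $\delta\downarrow 0$ finishes this direction.

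For $\length(\graph(f))\le\iH^1(\graph(f))$ the key input is the elementary fact that a connected set $E\subseteq\R^2$ containing two points $a,b$ satisfies $\iH^1(E)\ge|a-b|$ (project orthogonally onto the line through $a$ and $b$: this map is $1$-Lipschitz, so it does not increase $\iH^1$, and its image is connected and contains $a$ and $b$, hence the whole segment $[a,b]$). Fixing a partition $0=x_0<\dots<x_n=1$, the arcs $\gamma([x_{i-1},x_i])$ are compact, connected, cover $\graph(f)$, and — crucially using injectivity of $\gamma$ — overlap only in single points; hence $\iH^1(\graph(f))=\sum_{i=1}^n\iH^1(\gamma([x_{i-1},x_i]))\ge\sum_{i=1}^n|\gamma(x_i)-\gamma(x_{i-1})|$, and taking the supremum over partitions gives the inequality.

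The only delicate point is the finite additivity of $\iH^1$ over the covering arcs in the second part: one needs the arcs to be $\iH^1$-measurable, which holds since they are compact, and their pairwise intersections to be $\iH^1$-null, which holds since they are single points; then $\iH^1$ restricted to $\graph(f)$ is a Borel measure and the additivity is immediate. Everything else is routine, and in fact the whole statement can be quoted from \cite[Theorem~2.6.2]{BBI} as indicated above.
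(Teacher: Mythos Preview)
Your argument is correct and is exactly the standard proof of this classical identification. Note, however, that the paper does not actually prove Fact~\ref{f:wk}: it simply records it as a well-known consequence of \cite[Theorem~2.6.2]{BBI}, relying on the injectivity of $\gamma(x)=(x,f(x))$. Your write-up therefore supplies precisely the details that the paper elects to cite rather than spell out, and your closing remark that the whole statement can be quoted from \cite[Theorem~2.6.2]{BBI} is in fact all the paper does.
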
 
    
Let $X$ be a completely metrizable topological space. A subset of $X$ is \emph{somewhere dense} if it is dense in a non-empty open set, otherwise it is called \emph{nowhere dense}. We say that $M \subseteq X$ is
\emph{meager} if it is a countable union of nowhere dense sets, and a set is called \emph{co-meager} if its complement is meager. It is not difficult to show that a set is co-meager iff it contains a dense
$G_\delta$ set. We say that the generic element $x \in X$ has property $\mathcal{P}$ if $\{x \in X : x \textrm{ has property }
\mathcal{P} \}$ is co-meager. See e.g.~\cite{Ke} for more on these concepts.

Let $H\subseteq \R^d$ and $f\colon H\to \R^d$. Assume that $x_0\in H$ and $x_0$ is a limit point of $H$. We call $f$ \emph{differentiable at $x_0$}  if there is a $d\times d$ matrix $L$ such that
\begin{equation} \label{e:diff} 
\lim_{x\to x_0, \, x\in H} \frac{|f(x)-f(x_0)-L(x-x_0)|}{|x-x_0|}=0. 
\end{equation}
The above matrix $L$ is not necessarily unique. We say that $f$ is \emph{not differentiable at $x_0$} if there exists no $L$ satisfying \eqref{e:diff}. Actually, we will always prove in this case the stronger property that
\begin{equation*}
\limsup_{x\to x_0, \, x\in H} \frac{|f(x)-f(x_0)|}{|x-x_0|}=\infty.
\end{equation*} 
Note that if $f$ is differentiable at $x_0$ and either $H=[0,1]^d$ or $x_0$ is a Lebesgue density point of $H$ then there is a unique $L$ satisfying \eqref{e:diff}, in which case we define $f'(x_0)=L$.

\section{Versions of singularity}
\label{s:sing}

In this section we explore the connections between the various possible definitions of singularity of maps. The special cases when the map is one-to-one, or differentiable almost everywhere, or $d = 1$ are also examined. 

\begin{definition} \label{d:sing} Let $d \ge 1$, $H \subseteq \R^d$ Borel, and $f \colon H \to \R^d$ be Borel measurable. We say that $f$ is \emph{strongly singular} if $f$ is differentiable at $x$ and $\det f'(x) = 0$ for almost every $x \in H$, and $f$ is \emph{singular} if there exists a Borel set $F \subseteq H$ such that $\lambda^d(H \setminus F)=0$ and $\lambda^d(f(F)) = 0$. \end{definition} 

 \begin{theorem}
\label{t:versions of singularity}
		Let $d \ge 1$, $H \subseteq \R^d$ be Borel, and $f \colon H \to \R^d$ be Borel measurable. Consider the following properties of $f$:
		\begin{enumerate}
\item\label{f:derivative}  $f$ is strongly singular,
\item\label{f:limsup image} $\lim_{r \downarrow 0} \frac{\lambda^d(f(B(x, r)\cap H))}{\lambda^d(B(x, r))} = 0$ for almost every $x \in H$,
\item\label{f:limsup preimage} $\lim_{r \downarrow  0} \frac{\lambda^d(f^{-1}(B(y, r)))}{\lambda^d(B(y, r))} = 0$ for almost every $y \in \R^d$,
\item\label{f:pushforward is singular to lebesgue} the pushforward measure $\lambda^d \circ f^{-1}$ is singular with respect to $\lambda^d$, 
\item\label{f:full to null} $f$ is singular. 
\end{enumerate}
Then \eqref{f:derivative} $\Rightarrow$ \eqref{f:limsup image} $\Rightarrow$ \eqref{f:limsup preimage} $\Leftrightarrow$ \eqref{f:pushforward is singular to lebesgue} $\Leftrightarrow$ \eqref{f:full to null}. If $f$ is one-to-one then \eqref{f:limsup image} $\Leftrightarrow$ \eqref{f:limsup preimage}. If $f$ is differentiable at almost every $x \in H$ then \eqref{f:full to null}
$\Rightarrow$ \eqref{f:derivative}, therefore
\eqref{f:derivative} $\Leftrightarrow$ \eqref{f:limsup image} $\Leftrightarrow$ \eqref{f:limsup preimage} $\Leftrightarrow$ \eqref{f:pushforward is singular to lebesgue} $\Leftrightarrow$ \eqref{f:full to null}. 
\noindent
For each $d \ge 1$ the generic continuous map $f \colon [0, 1]^d \to \R^d$ witnesses \eqref{f:limsup preimage} $\not \Rightarrow$ \eqref{f:limsup image}. For each $d\ge 1$ and Cantor set $K \subseteq \R^d$ with $\lambda^d(K)>0$ the generic continuous map $f\colon K\to \R^d$ is one-to-one and witnesses \eqref{f:limsup image} $\not \Rightarrow$ \eqref{f:derivative}.  
\end{theorem}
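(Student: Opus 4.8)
I would prove the statement in four stages: the chain of implications, the one-to-one case, the differentiable-a.e.\ case, and the two genericity claims. For the chain, \eqref{f:derivative}$\Rightarrow$\eqref{f:limsup image} is a pointwise computation: where $f$ is differentiable with $\det f'(x)=0$, the set $f(B(x,r)\cap H)$ lies within distance $o(r)$ of the affine subspace $f(x)+f'(x)(\R^d)$, which has dimension $<d$, so $\lambda^d(f(B(x,r)\cap H))=o(r^d)$, and this holds for a.e.\ $x\in H$. For \eqref{f:limsup image}$\Rightarrow$\eqref{f:full to null} I would use the Vitali covering theorem: after reducing to $H$ bounded, for each $k$ cover the full-measure set where \eqref{f:limsup image} holds, up to a null set, by a disjoint family $\{\overline{B(x_i,r_i)}\}$ with $\lambda^d(f(B(x_i,r_i)\cap H))\le 2^{-k}\lambda^d(B(x_i,r_i))$; then $G_k:=H\cap\bigcup_i\overline{B(x_i,r_i)}$ is Borel, full measure in $H$, with $\lambda^d(f(G_k))\le 2^{-k}\sum_i\lambda^d(B(x_i,r_i))\le 2^{-k}\cdot\mathrm{const}$, so $F:=\bigcap_kG_k$ witnesses singularity (images of Borel sets under Borel maps are analytic, hence Lebesgue measurable). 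The equivalence \eqref{f:full to null}$\Leftrightarrow$\eqref{f:pushforward is singular to lebesgue} is bookkeeping with the null set carrying $\lambda^d\circ f^{-1}$, and \eqref{f:pushforward is singular to lebesgue}$\Leftrightarrow$\eqref{f:limsup preimage} is the Lebesgue--Besicovitch differentiation theorem applied to $\mu:=\lambda^d\circ f^{-1}$ (after a routine localisation to finite measures): the symmetric derivative $D\mu$ vanishes a.e.\ iff $\mu_{\mathrm{ac}}=0$ iff $\mu\perp\lambda^d$.

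If $f$ is one-to-one, Lusin--Souslin gives that $f(H)$ is Borel and $g:=f^{-1}\colon f(H)\to H$ is Borel measurable; since $g^{-1}(B(x,r))=f(B(x,r)\cap H)$, property \eqref{f:limsup image} for $f$ is exactly \eqref{f:limsup preimage} for $g$, and a short computation shows $\lambda^d\circ f^{-1}$ is singular iff $\lambda^d\circ g^{-1}$ is (if $\lambda^d\circ f^{-1}$ is concentrated on a null set $N$, then $\lambda^d\circ g^{-1}$ is concentrated on the null set $H\setminus f^{-1}(N)$; apply this to $g$ for the reverse), so by the equivalences applied to $g$ we obtain \eqref{f:limsup preimage}$\Rightarrow$\eqref{f:limsup image}. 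If $f$ is differentiable a.e., then \eqref{f:full to null}$\Rightarrow$\eqref{f:derivative} follows by contraposition: if $\det f'\neq 0$ on a positive-measure set, then partitioning that set by $\|f'(x)^{-1}\|$ and by the scale at which the differentiability estimate holds (the pieces are co-analytic, hence measurable) yields a positive-measure set $A$ and $\kappa,\delta>0$ with $|f(y)-f(x)|\ge\kappa|y-x|$ for $x\in A$, $y\in H$, $|y-x|<\delta$; intersecting $A$ with a ball of radius $<\delta/2$ makes $f|_A$ injective with $\kappa^{-1}$-Lipschitz inverse, so $\lambda^d(f(A))\ge\kappa^d\lambda^d(A)>0$, and intersecting further with the full-measure set $F$ from singularity contradicts $\lambda^d(f(F))=0$.

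For the genericity of $f\colon[0,1]^d\to\R^d$: the generic $f$ is singular, since the sets $W_n=\{f:\exists\text{ compact }C\subseteq[0,1]^d\text{ with }\lambda^d([0,1]^d\setminus C)<2^{-n}\text{ and }f(C)\text{ contained in an open set of measure}<2^{-n}\}$ are open and dense (on a fine cubic grid make $g$ constant on a large concentric subcube of each cube, interpolating with $f_0$ near the boundaries) and every $f\in\bigcap_nW_n$ is singular; hence \eqref{f:limsup preimage} holds generically. On the other hand the generic $f$ fails \eqref{f:limsup image} drastically: the sets $V_m=\{f:\forall x\ \exists r<1/m\text{ with }\lambda^d(f(B(x,r)\cap[0,1]^d))>m\lambda^d(B(x,r))\}$ are open (compactness of $[0,1]^d$ together with inner regularity of $f(U(x,r))$ by compact subsets and a stability estimate) and dense (add to $f_0$ a perturbation equal, on a fine grid of cubes of side $\ell$, to $a\Phi((\,\cdot\,-z_i)/\ell)$, where $\Phi$ is a fixed map vanishing on the boundary of the unit cube whose image has positive measure, $a$ is small, and $\ell\ll a/m^{1/d}$; a degree-theoretic argument then gives $\lambda^d(g(Q))\gtrsim a^d$ on each grid cube $Q$), so every $f\in\bigcap_mV_m$ satisfies $\limsup_{r\downarrow0}\lambda^d(f(B(x,r)\cap[0,1]^d))/\lambda^d(B(x,r))=\infty$ for all $x$. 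The (co-meager, hence non-empty) intersection of these two sets witnesses \eqref{f:limsup preimage}$\not\Rightarrow$\eqref{f:limsup image}.

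For a Cantor set $K$ with $\lambda^d(K)>0$: genericity of injectivity is the classical argument using the $0$-dimensionality of $K$, genericity of singularity is as above (make $g$ constant on each piece of a fine clopen partition), so --- being one-to-one --- the generic $f$ satisfies \eqref{f:limsup image}. The substantive point is that the generic $f$ is \emph{not} strongly singular. I would show $\{f:f\text{ strongly singular}\}\subseteq\bigcup_{n,k}\hat D_{n,k}$ with $\hat D_{n,k}=\{f:\exists\text{ Borel }E\subseteq K,\ \lambda^d(E)\ge\tfrac12\lambda^d(K),\ |f(y)-f(x)|\le n|y-x|\ \forall x\in E,\ y\in K,\ |y-x|<1/k\}$ --- the inclusion holds because differentiability a.e.\ makes the local ratios $g_m(x)=\sup\{|f(y)-f(x)|/|y-x|:y\in K,\ 0<|y-x|<1/m\}$ finite a.e.\ and Egorov's theorem makes $g_m$ converge uniformly on a set of measure $>\tfrac12\lambda^d(K)$. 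Each $\hat D_{n,k}$ is closed (take $\limsup$ of the witness sets), so it suffices to see it has empty interior: given $f_0,\varepsilon$, add to $f_0$ a one-coordinate sawtooth of small amplitude $a$ and very short period; since $\lambda^d(K)>0$, Lebesgue density furnishes a set of measure $>\tfrac34\lambda^d(K)$ on which some fixed small radius $r^\ast$ has $\lambda^d(K\cap B(x,r^\ast))\ge(1-\tfrac1{16})\lambda^d(B(x,r^\ast))$, and choosing the grid fine and $r^\ast$ small enough that $r^\ast<a/(8n)$ (possible for the relevant radius, irrespective of $k$) one finds for each such $x$ a point $y\in K$ with $0<|y-x|<r^\ast<1/k$ and $|g(y)-g(x)|>n|y-x|$; since every half-measure $E$ meets that set, $g\notin\hat D_{n,k}$. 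Hence the strongly singular maps are meager and the generic $f$ witnesses \eqref{f:limsup image}$\not\Rightarrow$\eqref{f:derivative}. I expect this last point to be the main obstacle: one must reconcile the Cantor structure of $K$ (clopen pieces are locally monochromatic, so naive piecewise-constant perturbations cannot create steep difference quotients at typical points) with the positivity of $\lambda^d(K)$ (Lebesgue density lets a genuinely oscillating perturbation be ``caught'' inside $K$ at a definite small scale), and the Egorov reduction must be organised so that a single perturbation defeats all half-measure subsets simultaneously.
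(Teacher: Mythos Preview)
Your arguments for the chain of implications, the one-to-one reduction via Lusin--Souslin, and the contrapositive in the differentiable-a.e.\ case all match the paper's (the paper uses the $5r$-covering theorem where you invoke Vitali, and partitions the bad set into pieces $A_{n,k}$ indexed by a countable dense subset rather than by $\|f'(x)^{-1}\|$, but these are cosmetic). For the generic $f\colon[0,1]^d\to\R^d$ the paper's argument is again close to yours, with one caveat: your sets $V_m$ are probably \emph{not} open as stated --- if $f(C)$ happens to be, say, a fat Cantor set, a nearby $g$ can collapse $C$ to a null set, so the ``stability estimate'' you invoke does not hold in general. The paper avoids this by requiring instead that $B(y,nr)\subseteq f(B(x,r))$, a condition that \emph{is} stable under small perturbations by a Brouwer fixed-point argument (Fact~\ref{f:1}). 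Your degree-theoretic density construction in fact already produces a whole neighbourhood inside $V_m$, so what you actually prove is that $V_m$ \emph{contains} a dense open set, which suffices; the issue is one of exposition rather than substance.

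The genuine divergence is in the last step. For the generic $f\colon K\to\R^d$, the paper does not go through Egorov, half-measure witness sets, and sawtooth perturbations. Instead it proves the much stronger fact (Claim~\ref{c:dif2}) that the generic $f$ satisfies $\limsup_{y\to x}|f(y)-f(x)|/\rho(x,y)=\infty$ for \emph{every} $x\in K$, and this holds on any compact perfect metric space --- the hypothesis $\lambda^d(K)>0$ is used only to make \eqref{f:limsup image} a nonvacuous condition, not in the nondifferentiability argument. The sets $\iF_{k,n}=\{f:\exists m\ge n\text{ with }\diam f(B(x,1/m))>k/m\text{ for all }x\in K\}$ have closed complement by compactness of $K$, and density is obtained by choosing a finite $1/(2m)$-net $\{x_i\}$, picking nearby distinct points $z_i\in B(x_i,1/(2m))$, and using Tietze to perturb $g$ on the finite set $\{x_i,z_i\}$ so that $|f(x_i)-f(z_i)|=\varepsilon>k/m$. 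Your approach can be pushed through along the lines you sketch (the Lebesgue-density count does work: the ``bad'' slab where $|\varphi(y_1)-\varphi(x_1)|$ is small has density strictly below $15/16$ in $B(x,r^\ast)$ once the period is much smaller than $r^\ast$), but it is considerably more laborious and yields only that the strongly singular maps form a meager set, rather than the pointwise nowhere-differentiability the paper obtains.
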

	
\begin{proof} We first show the directions \eqref{f:derivative} $\Rightarrow$ \eqref{f:limsup image}, \eqref{f:limsup image} $\Rightarrow$ \eqref{f:full to null}, \eqref{f:limsup preimage} $\Leftrightarrow$ \eqref{f:pushforward is singular to lebesgue}, \eqref{f:pushforward is singular to lebesgue} $\Leftrightarrow$ \eqref{f:full to null}, \eqref{f:limsup image} $\Leftrightarrow$ \eqref{f:limsup preimage} if $f$ is one-to-one, \eqref{f:full to null} $\Rightarrow$ \eqref{f:derivative} if $f$ is differentiable at almost every $x \in H$, and then construct the counterexamples. 
		
\eqref{f:derivative} $\Rightarrow$ \eqref{f:limsup image}:  
Assume that $f'$ exists at $x\in H$ and $\det f'(x) = 0$ and let
\begin{equation*}
R = \sup\{ |f'(x)y|: |y|\leq 1\}.  
\end{equation*} 
For $\varepsilon > 0$ fixed, let $r_0 > 0$ be small enough so that 
\begin{equation*}
 |f(y) - f(x) - f'(x) (y - x)| \le \varepsilon |y - x|   
\end{equation*} for all $y \in B(x, r_0) \cap H$. Since $\det f'(x) = 0$, the set $\{f'(x)y: y\in \R^d\}$ is contained in a $(d-1)$-dimensional hyperplane. Hence $f(B(x, r)\cap H)$ is contained in the $\varepsilon r$-neighborhood of a $(d-1)$-dimensional ball of radius $Rr$ for any $r < r_0$. Using the formula for the volume of the $d$-dimensional ball, $\lambda^d(B(\mathbf{0}, r)) = C_dr^d$, it follows that 
\begin{equation*}  
\frac{\lambda^d(f(B(x, r)\cap H))}{\lambda^d(B(x, r))} \le \frac{C_d ((R + \varepsilon) r)^{d - 1} 2\varepsilon r}{C_d r^{d}} \le c_x \varepsilon
\end{equation*}
for all $r < r_0$, where $c_x$ does not depend on $r$, $r_0$ or $\varepsilon$. Therefore the $\limsup$ in question is indeed $0$.
		
\eqref{f:limsup image} $\Rightarrow$ \eqref{f:full to null}: Let 
\begin{equation*}
F = \left\{x \in H : \limsup_{r \downarrow 0} \frac{\lambda^d(f(B(x, r)\cap H))}{\lambda^d(B(x, r))} = 0\right\}.   
\end{equation*}  
Using \eqref{f:limsup image}, $\lambda^d(H \setminus F) = 0$. To show that $\lambda^d(f(F)) = 0$, we use the $5r$-covering Theorem \cite[Theorem~2.1]{Ma}. By partitioning $F$ into countably many pieces, without loss of generality, it is enough to show that $\lambda^d(f(F')) = 0$, where $F' = F \cap [0, 1]^d$.

For a fixed $\varepsilon > 0$, let 
\begin{equation*} 
\mathcal{B} = \left\{B(x, r) : x \in F'\, , r \le 1, \text{ and } \frac{\lambda^d\left(f\left(B(x, 5r) \cap H\right)\right)}{\lambda^d(B(x, 5r))} \le \varepsilon\right\}.
\end{equation*}
Clearly, $F'\subseteq \bigcup \mathcal{B}$. The $5r$-covering theorem implies that there is a countable family $\mathcal{B}' \subseteq \mathcal{B}$ consisting of pairwise disjoint balls such that $5\mathcal{B}' \defeq \bigcup \{5B : B \in \mathcal{B}'\}$ covers $\bigcup \mathcal{B}$, where $5B = B(x, 5r)$ for a ball $B = B(x, r)$. 
Then
\begin{equation*}
\lambda^d(f(F')) \le \lambda^d\left(f\left(5\mathcal{B}'\right)\right) \le \varepsilon 5^d\lambda^d\left(\bigcup \mathcal{B}'\right) \le \varepsilon 5^d 3^d,
\end{equation*}
where we used the disjointness of the balls in $\mathcal{B}'$ and the fact that $\bigcup \mathcal{B}' \subseteq [-1, 2]^d$. It follows that $\lambda^d(F') = 0$, finishing the proof. 
		
\eqref{f:limsup preimage} $\Leftrightarrow$ \eqref{f:pushforward is singular to lebesgue}: The equivalence of a measure, in this case $\lambda^d \circ f^{-1}$ being singular with respect to $\lambda^d$ and $\limsup_{r \downarrow 0} \frac{\lambda^d(f^{-1}(B(y, r))}{\lambda^d(B(y, r))} = 0$ almost everywhere is a well-known fact in the theory of differentiation of measures. In particular, it follows from \cite[Theorem~7.14]{Rudin}. 
		
\eqref{f:pushforward is singular to lebesgue} $\Leftrightarrow$ \eqref{f:full to null}: If $\lambda^d \circ f^{-1}$ is singular with respect to $\lambda^d$ then there is a Borel set $N \subseteq \R^d$ with $\lambda^d(N) = 0$ and 
\begin{equation*}
 \lambda^d(H \setminus f^{-1}(N)) = \lambda^d(f^{-1}(\R^d) \setminus f^{-1}(N)) = \lambda^d(f^{-1}(\R^d\setminus N)) = 0. 
\end{equation*} 
Hence \eqref{f:full to null} is satisfied with $F = f^{-1}(N)$. Conversely, if $F$ is a Borel set witnessing \eqref{f:full to null}, then $f(F)$ is a Lebesgue measurable set with $\lambda^d(f(F)) = 0$ and 
\begin{equation*}
\lambda^d(f^{-1}(\R^d \setminus f(F))) = \lambda^d(H \setminus f^{-1}(f(F))) \le \lambda^d(H \setminus F) = 0,    
\end{equation*}
showing that $\lambda^d \circ f^{-1}$ is singular with respect to $\lambda^d$.
		
\eqref{f:limsup image} $\Leftrightarrow$ \eqref{f:limsup preimage} if $f$ is one-to-one: We first note that since $f$ is one-to-one, using \cite[Corollary~15.2]{Ke}, $f(B)$ is Borel for every Borel set $B \subseteq H$, and $f^{-1} \colon f(H) \to \R^d$ is a Borel measurable function. We need to prove that  \eqref{f:limsup preimage} $\Rightarrow$ \eqref{f:limsup image}. Indeed, \eqref{f:limsup preimage} implies \eqref{f:full to null}, and using that $f(F)$ is Borel, \eqref{f:full to null} holds for $f^{-1}$ as well. Then \eqref{f:full to null} yields \eqref{f:limsup preimage} for $f^{-1}$, which means that \eqref{f:limsup image} holds for $f$. 
		 
 \eqref{f:full to null}
$\Rightarrow$ \eqref{f:derivative} if $f$ is differentiable at almost every $x \in H$: We note first that the sets defined throughout the proof could be proved to be Borel sets using standard methods. However, as the proof works without using that the sets in question are Borel, the reader is encouraged to think of $\lambda^d$ as an outer measure. 
		 
Using \eqref{f:full to null}, there is a Borel set $F\subseteq H$ such that $\lambda^d(H\setminus F)=0$ and $\lambda^d(f(F))=0$. Assume to the contrary that 
\begin{equation*}
 A=\{x\in F: x \text{ is a Lebesgue density point of $H$}, \exists f'(x) \text{ and } \det f'(x)\neq 0 \}    
\end{equation*} satisfies $\lambda^d (A)>0$. Let $Q = \{ q_1, q_2, \dots\}$ be a countable dense set in $A$ and for all $n,k\in \N^+$ let 
\begin{equation*}
 \textstyle A_{n, k} = \left\{x \in A \cap B(q_k, \frac{1}{n}) : |f(x) - f(y)| \ge \frac{1}{n} |x - y| \text{ for all } y \in B(q_k, \frac{1}{n}) \;\right\}.   
\end{equation*} Using that for each $x \in A$ the derivative of $f$ at $x$ exists with non-zero determinant, it is easy to check  that $\bigcup_{n=1}^{\infty} \bigcup_{k=1}^{\infty} A_{n, k} = A$. It follows that $\lambda^d(A_{n, k}) > 0$ for some $n$ and $k$. Let us fix such an $n$ and $k$. Using the definition of $A_{n, k}$, one easily checks that $f$ is one-to-one on $A_{n, k}$ and that $f^{-1}|_{f(A_{n,k})}$ is Lipschitz with Lipschitz constant at most $n$. Using also that $A_{n,k} \subseteq F$, we obtain that 
\begin{equation*}
 \lambda^d(A_{n, k}) = \lambda^d(f^{-1}(f(A_{n, k}))) \le n\lambda^d(f(A_{n, k})) \le n \lambda^d(f(F)) = 0,   
\end{equation*} a contradiction.

\eqref{f:limsup preimage} $\not \Rightarrow$ \eqref{f:limsup image}: To show that the generic continuous function satisfies \eqref{f:limsup preimage}, it is enough to show that it satisfies \eqref{f:full to null}. It is probably widely known, but we include its short proof for the sake of completeness.

\begin{claim} \label{c:generic continuous function nowhere dense set}
There is an $F_{\sigma}$ set $F\subseteq [0,1]^d$ with $\lambda^d(F)=1$ such that the generic continuous map $f \in C([0, 1]^d, \R^d)$ satisfies $\lambda^d(f(F))=0$.
\end{claim}
\begin{proof} Take a union of Cantor sets $F=\bigcup_{i=1}^{\infty} C_i\subseteq [0, 1]^d$ such that $\lambda^d(F)=1$. Fix $i,n\in \N^+$ arbitrarily. It is enough to prove that 
\begin{equation*}
\iF_{i,n}=\{f\in C([0,1]^d,\R^d): \lambda^d(f(C_i))<1/n\}
\end{equation*}	
is dense open, since then $\bigcap_{i=1}^{\infty}\bigcap_{n=1}^{\infty} \iF_{i,n}$ gives our desired co-meager set. The set $\iF_{i,n}$ is clearly open by the outer regularity of Lebesgue measure, so it suffices to check that it is dense. Let $g \in C([0, 1]^d,\R^d)$ and $\eps > 0$ be given, we need to find $f\in \iF_{i,n}\cap B(g,\eps)$. By the uniform continuity of $g$ there exists $\delta>0$ such that $|g(x) - g(y)| < \eps$ whenever $|x - y| \le \delta$. As $C_i$ is totally disconnected, there is a finite family of pairwise disjoint open sets $\{U_1, \dots, U_k\}$ such that $\diam(U_j)<\delta$ for all $1\leq j\leq k$ and $C_i\subseteq \bigcup_{j=1}^k U_j$. We can define $f\in B(g,\eps)$ such that $f|_{C_i\cap U_j}$ is constant for all $1\leq j\leq k$. Indeed, it is enough that $f|_{C_i}\in B(g|_{C_i},\eps)$, then Tietze Extension Theorem guarantees the existence of a continuous extension $f\in B(g,\eps)$. Clearly $f\in \iF_{i,n}$, and the proof is complete.  
\end{proof}
		
Now we prove that the generic continuous map $f\colon [0, 1]^d \to \R^d$ does not satisfy \eqref{f:limsup image}. Moreover, we prove the following more general claim. First we need a known fact.

\begin{fact} \label{f:1} Assume that $0<\alpha<\beta<\infty$ and  $B=B(\mathbf{0},\beta)$ is a closed ball in $\R^d$. Let $f\colon B\to \R^d$ be a continuous map such that $f\in B(\id, \alpha)$. Then $B(\mathbf{0},\beta-\alpha)\subseteq f(B)$.  
\end{fact}
		
\begin{proof} Let $y_0\in B(\mathbf{0},\beta-\alpha)$ be arbitrary, it is enough to show $y_0\in f(B)$. Set $g(x)=x-f(x)+y_0$, then $g$ maps $B$ into itself. By Brouwer Fixed Point Theorem~\cite[Proposition~4.4]{Fu} there is a point $x_0\in B$ such that $g(x_0)=x_0$, so $f(x_0)=y_0$. \end{proof}

\begin{claim} \label{c:lims}
For the generic continuous $f\in C([0,1]^d,\R^d)$ for any $x \in [0,1]^d$,
\begin{equation*}
\limsup_{r\downarrow 0} \frac{\lambda^d(f(B(x, r)\cap [0,1]^d))}{\lambda^d(B(x, r))} = \infty.
\end{equation*}
\end{claim}
		
	\begin{proof} For the sake of simplicity we denote $f(B(x, r)\cap [0,1]^d))$ by $f(B(x,r))$.  Fix $n\in \N^+$, it is enough to show that 
	\begin{equation*} 
	\begin{split}
	\iF_n=\{f\in C([0,1]^d, \R^d): \forall x\in [0,1]^d ~ \exists  y\in [0,1]^d ~ \exists  r<1/n \\
	\text{ such that } B(y,nr)\subseteq f(B(x,r))\}
	\end{split}
	\end{equation*}
	contains a dense open set, since then $\bigcap_{n=1}^{\infty} \iF_n$ will give our desired co-meager set. Assume that $g\in C([0,1]^d, \R^d)$ and $\eps>0$ are given, we will find $f\in B(g,2\eps)$  such that $B(f,\theta)\subseteq \iF_n$ for some $\theta>0$. By the uniform continuity of $g$ we can choose a positive $\delta<\eps/(6n)$ such that $|g(x)-g(y)|\leq \eps$ whenever $|x-y|\leq \delta$. Let $\{x_1,\dots,x_k\}$ be points in $[0,1]^d$ such that $\{B(x_i,\delta)\}_{1\leq i\leq k}$ are pairwise disjoint and $\bigcup_{i=1}^k B(x_i,2\delta)=[0,1]^d$. We show that there is a continuous map $f\in B(g,2\eps)$ such that for all $1\leq i\leq k$ and $x\in B(x_i,\delta)$ we have $f(x)=g(x_i)+6n(x-x_i)$. Let $C=\bigcup_{i=1}^k B(x_i,\delta)$, by Tietze Extension Theorem it is enough to prove that $f|_{C}\in B(g|_C,2\eps)$. Indeed, for all $i$ and $x\in B(x_i,\delta)$ we have 
	\begin{equation*}
		|f(x)-g(x)|\leq |g(x)-g(x_i)|+6n|x-x_i|\leq \eps+6n\delta<2\eps.
	\end{equation*} 
	Let $\theta=3n\delta$. Finally, we need to show that $B(f,\theta)\subseteq \iF_n$. Let $r=3\delta$ and fix arbitrary $x\in [0,1]^d$ and $h\in B(f,\theta)$. Choose $i\in \{1,\dots,k\}$ such that $x\in B(x_i,2\delta)$. Then $B(x_i,\delta)\subseteq B(x,3\delta)=B(x,r)$. Composing $f$ with a homothety of ratio $\frac{1}{6n}$ and applying Fact~\ref{f:1} to it with $\beta=\delta$ and $\alpha=\frac{\delta}{2}$ yields $B(g(x_i),3n\delta) \subseteq h(B(x_i,\delta))$. Thus $y=g(x_i)$ satisfies
		\begin{equation*} 
		B(y,nr)=B(g(x_i),3n\delta) \subseteq h(B(x_i,\delta)) \subseteq h(B(x,r)).
		\end{equation*} 
		Therefore $h\in \iF_n$, and the proof is complete.
		\end{proof} 
		
	\eqref{f:limsup image} $\not \Rightarrow$ \eqref{f:derivative}:  Let $K\subseteq \R^d$ be a Cantor set with $\lambda^d(K)>0$. Consider the generic continuous map $f\in C(K,\R^d)$. Then we have $\lambda^d(f(K))=0$, for the well-known argument see the proof of Claim~\ref{c:generic continuous function nowhere dense set}, so $f$ automatically satisfies \eqref{f:limsup image}. As $K$ is totally disconnected, it is widely known that the generic $f\in C(K,\R^d)$ is one-to-one, see e.g.~\cite[Lemma~2.6]{BBE}. Hence $f$ is a homeomorphism from $K$ to $f(K)$. Therefore, it is enough to prove that the generic $f\in C(K,\R^d)$ is nowhere differentiable. We prove the following more general statement. 
		
\begin{claim} \label{c:dif2} Let $(K,\rho)$ be a compact, perfect metric space. Then for the generic continuous map $f\in C(K,\R^d)$ for all $x\in K$ we have 
\begin{equation} \label{e:rho}
\limsup_{y\to x} \frac{|f(x)-f(y)|}{\rho(x,y)}=\infty.
\end{equation}
\end{claim}
\begin{proof}
For all $k,n\in \N^+$ let 
\begin{equation*}
	 \iF_{k,n}=\{f: \exists m\geq n \text{ such that }  \diam f(B(x,1/m))>k/m \text{ for all } x\in K\}.   
\end{equation*}
Clearly the functions in $\bigcap_{k=1}^{\infty} \bigcap_{n=1}^{\infty} \iF_{k,n}$ satisfy \eqref{e:rho}, so it is enough to prove that the $\iF_{k,n}$ are dense open sets. 
		
Fix $k,n\in \N^+$. First we prove that $\iF_{k,n}$ is open. Assume that $f_i\in \iF_{k,n}^c$ and $f_i\to f$ uniformly as $i\to \infty$, we need to prove that $f\in \iF_{k,n}^c$. Fix $m\geq n$ arbitrarily. Then there exist $x_{i,m} \in K$ for all $i\in \N^+$ such that $\diam f_i(B(x_{i,m},1/m)) \leq k/m$. As $K$ is compact, by choosing a subsequence we may assume that $x_{i,m}\to x_m$ as $i\to \infty$. Then clearly $\diam f(B(x_m,1/m))\leq k/m$. As this holds for all $m\geq n$, we obtain that $f\in \iF_{k,n}^c$. 
		
Finally, we prove that $\iF_{k,n}$ is dense. Let $\eps>0$ and a continuous $g\colon K\to \R^d$ be given. We will construct an $f\in B(g,2\eps)\cap \iF_{k,n}$. Choose $m\geq n$ with $k/m<\eps$ and finitely many distinct points $\{x_1,\dots,x_N\}$ in $K$ such that $\bigcup_{i=1}^N B(x_i,1/(2m))=K$. For all $1\leq i\leq N$ choose distinct points $z_i\in B(x_i, 1/(2m))\setminus \{x_1, \dots ,x_N\}$ such that  $|g(x_i)-g(z_i)|\leq \eps$. Define $f(x_i)=g(x_i)$ and $f(z_i)=y_i$ so that $|g(x_i)-y_i|=\eps$. Then $|f(z_i)-g(z_i)|\leq 2\eps$, so applying Tietze Extension Theorem for the finite set $\{x_i,z_i: i\leq N\}$ we obtain a continuous map $f\in B(g,2\eps)$ with the above property. We need to check that $f\in \iF_{k,n}$. Indeed, fix $x\in K$ arbitrarily. Then there exists $1\leq i\leq N$ such that $x_i,z_i\in B(x,1/m)$, and $|f(x_i) -f(z_i)|=\eps>k/m$. Thus $ \diam f(B(x,1/m))>k/m$, so $f\in \iF_{k,n}$. This completes the proof. 
	\end{proof}
	
Therefore the proof of Theorem~\ref{t:versions of singularity} is also complete. \end{proof}

\begin{theorem}
\label{t:length}
If $f \in \Homeo([0,1])$ then the five properties \eqref{f:derivative}-\eqref{f:full to null} are equivalent, and so is  
\begin{enumerate}  \setcounter{enumi}{5} \item \label{e:length} the length of the graph of $f$ equals $2$.
\end{enumerate}
\end{theorem}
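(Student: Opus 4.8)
The plan is to show that property \eqref{e:length} joins the equivalence class \eqref{f:derivative}--\eqref{f:full to null}. Since $f\in\Homeo([0,1])$ is a continuous bijection of $[0,1]$, it is strictly monotone; if it is decreasing I replace it by $x\mapsto 1-f(x)$, which reflects $\graph f$ across the line $y=1/2$ and hence preserves both the length of the graph and the two notions of singularity, so I may assume $f$ is increasing with $f(0)=0$ and $f(1)=1$. A monotone function is differentiable almost everywhere, so Theorem~\ref{t:versions of singularity} already tells us that \eqref{f:derivative}--\eqref{f:full to null} are equivalent for $f$; it therefore remains only to prove \eqref{f:derivative}$\Rightarrow$\eqref{e:length} and \eqref{e:length}$\Rightarrow$\eqref{f:full to null}. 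Throughout I use $\length(\graph f)=\mathcal{H}^1(\graph f)$ (Fact~\ref{f:wk}) and the trivial estimate $\length(\graph f)\le 2$, which holds since any partition $0=t_0<\dots<t_m=1$ gives $\sum_k\sqrt{(t_k-t_{k-1})^2+(f(t_k)-f(t_{k-1}))^2}\le\sum_k(t_k-t_{k-1})+\sum_k(f(t_k)-f(t_{k-1}))=2$.

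For \eqref{f:derivative}$\Rightarrow$\eqref{e:length}: assuming $f'=0$ a.e., the Lebesgue--Stieltjes measure $\mu_f$ of $f$ (with $\mu_f((a,b])=f(b)-f(a)$ and $\mu_f([0,1])=1$) has vanishing absolutely continuous part, so it is singular with respect to $\lambda$ -- this is the classical fact that a monotone function with a.e.\ vanishing derivative is singular. Thus there is a Borel set $N\subseteq[0,1]$ with $\lambda(N)=0$ and $\mu_f([0,1]\setminus N)=0$. Given $\eps>0$, I cover $N$ by an open set $U$ with $\lambda(U)<\eps$; since $\mu_f$ is concentrated on $N\subseteq U$, I can pick finitely many pairwise disjoint intervals $(c_1,d_1),\dots,(c_n,d_n)\subseteq U$ with $\sum_j\big(f(d_j)-f(c_j)\big)>1-\eps$, while $\sum_j(d_j-c_j)\le\lambda(U)<\eps$. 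Consider the partition of $[0,1]$ consisting of $0$, $1$, and all the $c_j,d_j$: estimating the segment over each $[c_j,d_j]$ below by its vertical side $f(d_j)-f(c_j)$, and the segment over each complementary gap below by its horizontal side, the resulting polygonal length is at least $(1-\eps)+(1-\eps)$, so $\length(\graph f)\ge 2-2\eps$. Letting $\eps\downarrow 0$ gives $\length(\graph f)=2$.

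For \eqref{e:length}$\Rightarrow$\eqref{f:full to null}: assume $\length(\graph f)=2$. The graph $\Gamma=\graph f$ is a simple rectifiable arc, so (by standard facts about rectifiable curves, cf.~\cite{BBI}) it admits an arc-length parametrization $\gamma\colon[0,2]\to\Gamma$, a homeomorphism onto $\Gamma$ with $|\gamma'(s)|=1$ for a.e.\ $s$. Write $\gamma=(\gamma_1,\gamma_2)$; both coordinates are $1$-Lipschitz, and since on $\Gamma$ the $x$-coordinate and -- using that $f$ is increasing -- the $y$-coordinate are each bijections onto $[0,1]$, each $\gamma_i$ is a strictly increasing bijection of $[0,2]$ onto $[0,1]$, so $\gamma_i'\ge 0$ a.e.\ and $\int_0^2\gamma_i'=1$. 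From $(\gamma_1')^2+(\gamma_2')^2=1$ a.e., the pointwise bound $\gamma_1'+\gamma_2'\ge\sqrt{(\gamma_1')^2+(\gamma_2')^2}=1$, and $\int_0^2(\gamma_1'+\gamma_2')=2$, I get $\gamma_1'+\gamma_2'=1$ a.e., whence $\gamma_1'\gamma_2'=\tfrac12\big((\gamma_1'+\gamma_2')^2-(\gamma_1')^2-(\gamma_2')^2\big)=0$ a.e. So, off some Borel null set $Z\subseteq[0,2]$, one has $\gamma_1'\in\{0,1\}$; let $W_1$ and $W_2$ be the Borel sets of points of $[0,2]\setminus Z$ at which $\gamma_1'$ equals $1$ and $0$ respectively, so that $\gamma_2'=0$ on $W_1$. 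Since $\gamma_1$ is a Lipschitz bijection, $\lambda(\gamma_1(Z))=0$ and $\lambda(\gamma_1(W_2))=\int_{W_2}\gamma_1'=0$, hence $F:=\gamma_1(W_1)$ has full measure in $[0,1]$, and $F$ is Borel because $\gamma_1$ is one-to-one and continuous \cite[Corollary~15.2]{Ke}. Finally, for $x\in F$, writing $x=\gamma_1(s)$ with $s\in W_1$ gives $f(x)=\gamma_2(s)\in\gamma_2(W_1)$, and $\lambda(\gamma_2(W_1))=\int_{W_1}\gamma_2'=0$; therefore $\lambda(f(F))=0$ and $f$ is singular.

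I expect the main difficulty to be technical rather than conceptual. The idea in both directions is that extremal length of the graph forces it to be ``horizontal plus vertical'' -- realized on one side by removing a tiny open set that carries all of the Stieltjes mass, and on the other by forcing the unit-speed velocity $\gamma'$ to be almost everywhere axis-parallel. The points needing care are: invoking, with correct references, the basic properties of the arc-length parametrization ($\gamma$ a homeomorphism, $|\gamma'|=1$ a.e., $\int_0^2\gamma_i'=1$); the facts that a monotone $f$ with $f'=0$ a.e.\ has singular Stieltjes measure and that a Lipschitz injection maps the zero set of its derivative to a null set; and keeping the witnessing set $F$ Borel, for which the Lusin--Souslin theorem suffices. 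A routine but slightly fiddly detail is the passage from ``$\mu_f$ concentrated off $N$'' to a \emph{finite} disjoint family of intervals capturing all but $\eps$ of the mass.
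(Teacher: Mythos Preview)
Your proof is correct but differs from the paper's in both directions linking \eqref{e:length} to the rest. The paper closes the cycle via \eqref{f:full to null}$\Rightarrow$\eqref{e:length} and \eqref{e:length}$\Rightarrow$\eqref{f:derivative}: for the first it projects $\graph(f|_N)$ to the $y$-axis and $\graph(f|_{[0,1]\setminus N})$ to the $x$-axis (where $N$ is a nullset with $\lambda(f(N))=1$), summing the two $\mathcal{H}^1$-lower bounds to get $\mathcal{H}^1(\graph f)\ge 2$; for the second it fixes partitions with polygonal length $\ge 2-2^{-n}$, defines $S_n$ as the union of subintervals with small difference quotient, and shows by an elementary estimate on $a+b-\sqrt{a^2+b^2}$ that $\lambda(S_n)\to 1$, forcing $\underline{D}f=0$ on a full-measure set. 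You instead prove \eqref{f:derivative}$\Rightarrow$\eqref{e:length} by concentrating the Stieltjes mass on a short open set and building a polygonal lower bound, and \eqref{e:length}$\Rightarrow$\eqref{f:full to null} via the arc-length parametrization $\gamma$, deriving the neat identity $\gamma_1'\gamma_2'=0$ a.e.\ from $\gamma_1'+\gamma_2'=1$ and $(\gamma_1')^2+(\gamma_2')^2=1$. Your arc-length argument is arguably the more conceptual of the two routes from \eqref{e:length} to singularity, exhibiting the ``horizontal plus vertical'' decomposition directly; the price is that it leans on background facts about rectifiable curves (existence of a unit-speed parametrization, the 1D area formula $\lambda(\gamma_i(A))=\int_A\gamma_i'$) that the paper's bare-hands partition estimate avoids. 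Conversely, the paper's projection argument for singularity $\Rightarrow$ \eqref{e:length} is a bit quicker than your Stieltjes construction, though both are short.
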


\begin{proof}
By Theorem~\ref{t:versions of singularity} it is enough to prove that \eqref{f:full to null} $\Rightarrow$ \eqref{e:length} and \eqref{e:length} $\Rightarrow$ \eqref{f:derivative}. 
Fix a homeomorphism $f\colon [0,1]\to [0,1]$, we may assume that $f$ is monotone increasing. First we show that 
\begin{equation}\label{e:len} \length(\graph(f))\leq 2.
\end{equation} 
Indeed, consider finitely many points $0=a_0<a_1<\dots<a_k=1$. Then for the corresponding approximation of the length of $\graph(f)$ we obtain 
\begin{equation} \label{e:=2} \sum_{i=1}^k |(a_{i}-a_{i-1}, f(a_{i})-f(a_{i-1}))|\leq \sum_{i=1}^k (a_{i}-a_{i-1})+(f(a_{i})-f(a_{i-1}))=2,
\end{equation}
which implies \eqref{e:len}. 

Now we prove that \eqref{f:full to null} $\Rightarrow$ \eqref{e:length}: Let $f \in \Homeo([0, 1])$ be a homeomorphism satisfying \eqref{f:full to null}. Since Fact~\ref{f:wk} implies that   $\length(\graph(f))=\iH^1(\graph(f))$, by \eqref{e:len} it is enough to prove that 
\begin{equation} \label{e:H}  
\iH^1(\graph(f))\geq 2.
\end{equation} 
There is a Borel nullset $N\subseteq [0,1]$ with $\lambda(f(N))=1$. As the $\iH^1$ measure cannot increase under an orthogonal projection, projecting $\graph(f|_N)$ to the $y$-axis implies that 
\begin{equation*}
\iH^1(\graph(f|_N))\geq 1.
\end{equation*} 
Similarly, projecting $\graph(f|_{[0,1]\setminus N})$ to the $x$-axis yields 
\begin{equation*} \iH^1(\graph(f|_{[0,1]\setminus N}))\geq 1.
\end{equation*} 
Adding up the above two inequalities implies \eqref{e:H}. Here we used that the two parts of the graphs are continuous one-to-one images of Borel sets, so they are Borel (in particular, $\iH^1$ measurable) by \cite[Corollary~15.2]{Ke}.

Now we prove that \eqref{e:length} $\Rightarrow$ \eqref{f:derivative}: Let $f \in \Homeo([0, 1])$ be a homeomorphism satisfying \eqref{e:length}. Since the length of the graph of $f$ is $2$, for each $n\in \N^+$ we can choose points 
\begin{equation*}
0=a^n_0<a^n_1<\dots<a_{k_n}^n=1
\end{equation*} such that 
\begin{equation} \label{e:a_n} 
a_i^n-a_{i-1}^n< \frac 1n \text{ for all } 1\leq i\leq k_n
\end{equation} and the sum of the lengths of the corresponding line segments satisfies
\begin{equation} \label{e:ell}
\ell_n \defeq \sum_{i=1}^{k_n} \left|(a_i^n-a_{i-1}^n, f(a_i^n)-f(a_{i-1}^n))\right|\geq 2-2^{-n}.
\end{equation} 
Define 
\begin{equation*}
S_n=\bigcup \left\{(a_{i-1}^n,a_i^n): 1\leq i\leq k_n \text{ and } \frac{f(a_i^n)-f(a_{i-1}^n)}{a_i^n-a_{i-1}^n}\leq \frac 1n \right\}.
\end{equation*}
It is enough to prove that $\lambda(S_n)\to 1$ as $n\to \infty$. Indeed, let $S=\limsup_{n} S_n$, then clearly $\lambda(S)=1$. By the definition of $S$ and \eqref{e:a_n} at each point $x\in S$ the \emph{lower derivative}
\begin{equation*}
\underline{D}f(x)\defeq \liminf_{t\to 0} \frac{f(x+t)-f(x)}{t}
\end{equation*}
satisfies $\underline{D}f(x)=0$. Since $f$ is monotone, it is differentiable almost everywhere, so we obtain that $f'(x)=0$ at almost every $x$. 

Finally, we prove that $\lambda(S_n)\to 1$ as $n\to \infty$. For each $n\in \N^+$ define
\begin{equation*}
\iI_n=\left\{ 1\leq i\leq k_n:  \frac{f(a_i^n)-f(a_{i-1}^n)}{a_i^n-a_{i-1}^n}> \frac 1n \right\}.
\end{equation*}
Fix $n\in \N^+$, we clearly have
\begin{equation} \label{e:S}
\sum_{i\in \iI_n} a_i^n-a_{i-1}^n=1-\lambda(S_n). 
\end{equation}
For all $1\leq i\leq k_n$ define 
\begin{equation*}
w_{i}^n=(a_i^n-a_{i-1}^n)+(f(a_i^n)-f(a_{i-1}^n))-|(a_i^n-a_{i-1}^n,f(a_i^n)-f(a_{i-1}^n)|.
\end{equation*}
For every $i\in \iI_n$ we obtain by elementary calculation that
\begin{equation} \label{e:w}
 w_i^n \geq (a_i^n-a_{i-1}^n)\left(1+1/n-\sqrt{1+1/n^2}\right)\geq \frac{a_i^n-a_{i-1}^n}{n+1}. 
\end{equation}
Then \eqref{e:ell}, the equation in \eqref{e:=2}, \eqref{e:w}, and \eqref{e:S} imply that 
\begin{equation*}
2^{-n}\geq 2-\ell_n=\sum_{i=1}^{k_n} w_i^n\geq \sum_{i\in \iI_n} \frac{a_i^n-a_{i-1}^n}{n+1}=\frac{1-\lambda(S_n)}{n+1}. \end{equation*}
Thus $\lambda(S_n)\geq 1-(n+1)2^{-n}\to 1$ as $n\to \infty$, which completes the proof.
\end{proof}

\section{Singularity of generic homeomorphisms}
\label{s:gen_sing}

In this section we prove that for each $d\geq 2$ the generic $f\in \HomeoId$ is not strongly singular, but for all $d\geq 1$ it is singular. This answers the generic case of Questions \ref{q:sing} and \ref{q:strong_sing}. The next result implies that the generic $f\in  \HomeoId$ is not strongly singular.

\begin{theorem} \label{t:gen} Let $d\geq 2$ be an integer. The generic $f\in  \HomeoId$ is nowhere differentiable. 
\end{theorem}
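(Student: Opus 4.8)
The plan is to exhibit the nowhere differentiable homeomorphisms as a dense $G_\delta$. For $k,n\in\N^{+}$ set
\[
\mathcal{D}_{k,n}=\bigl\{f\in\HomeoId:\ \forall x\in[0,1]^d\ \exists y\in[0,1]^d\text{ with }0<|x-y|<1/n\text{ and }|f(x)-f(y)|>k|x-y|\bigr\}.
\]
If $f\in\bigcap_{k,n}\mathcal{D}_{k,n}$ then $\limsup_{y\to x}\frac{|f(x)-f(y)|}{|x-y|}=\infty$ for every $x$, so $f$ is nowhere differentiable. Each $\mathcal{D}_{k,n}$ is open: if $f_i\to f$ in $\HomeoId$ and each $f_i$ has a point $x_i$ witnessing $f_i\notin\mathcal{D}_{k,n}$, then, passing to a subsequence with $x_i\to x$ and using the uniform convergence $f_i\to f$, the point $x$ witnesses $f\notin\mathcal{D}_{k,n}$ — this is the compactness argument already used for the openness of $\iF_{k,n}$ in Claim~\ref{c:dif2}. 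By the Baire category theorem it therefore remains to prove that each $\mathcal{D}_{k,n}$ is dense.

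Fix $g\in\HomeoId$ and $\eps>0$; I would look for $f=\Psi\circ g$ with $\Psi\in\HomeoId$ and $\|\Psi-\id\|$ small. This already gives $\varrho(f,g)<\eps$: indeed $\|\Psi\circ g-g\|=\|\Psi-\id\|$, while $\|\Psi^{-1}-\id\|=\|\Psi-\id\|$ (substitute $u=\Psi^{-1}(w)$), so $\|g^{-1}\circ\Psi^{-1}-g^{-1}\|$ is small by uniform continuity of $g^{-1}$. By uniform continuity of $g^{-1}$ choose $r_{*}>0$ with $U(g(x),r_{*})\cap[0,1]^d\subseteq g\bigl(U(x,1/n)\cap[0,1]^d\bigr)$ for all $x$; then $f\in\mathcal{D}_{k,n}$ follows once $\Psi$ has the property that \emph{for every $w\in[0,1]^d$ there is $v\in[0,1]^d$ with $0<|v-w|<r_{*}$ and $|\Psi(w)-\Psi(v)|>k\,\omega(|v-w|)$}, where $\omega$ is a modulus of continuity of $g^{-1}$. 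This is where $d\ge2$ is used: a (suitably damped) shear $S$, which alters only the $x_2$-coordinate by a function of $x_1$, is a homeomorphism of $[0,1]^d$ no matter how violently that function oscillates; if it oscillates with amplitude $a$ and period $\le t$ then $\|S-\id\|\le a$ but $\diam S\bigl(B(w,t)\cap[0,1]^d\bigr)\ge a$ for $w$ away from $\partial[0,1]^d$. Since $\omega(t)\to0$, taking $t$ small and $a$ slightly larger than $k\,\omega(t)$ (still tiny) yields the required pair for such $w$. To reach \emph{every} $w$ one composes shears acting in all $d$ coordinate directions, together with ``collar'' maps $x\mapsto(\psi(x_1),\dots,\psi(x_d))$, where $\psi$ is an increasing self-homeomorphism of $[0,1]$, equal to the identity off a small neighbourhood of $\{0,1\}$, with $\psi'$ unbounded near $0$ and $1$; these expand the coordinate directions near $\partial[0,1]^d$.

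The hard part will be the bookkeeping needed to make the required expansion hold at every point simultaneously while $\|\Psi-\id\|$ stays below $\eps$. Two issues drive this. First, a shear expands only balls that avoid the hyperplanes on which its oscillating function is (nearly) constant, so the shears must be localized around centres forming a fine net; and boundary points are genuinely delicate, since every element of $\HomeoId$ restricts to a homeomorphism of $\partial[0,1]^d$ and hence cannot be made wild \emph{along} the boundary (for $d=2$ the boundary is a circle, whose homeomorphisms are monotone) — this is precisely why one needs the collar maps, which expand the directions transverse to $\partial[0,1]^d$ instead. Second, a later perturbation of size comparable to an earlier expansion can cancel it, so the pieces must be applied along a hierarchy of geometrically decreasing scales, coarsest first, and one must check that the total displacement of the finer pieces is only a small fraction of a coarser expansion and so cannot undo it; the calibration of the oscillation strength against the modulus $\omega$ of $g^{-1}$ is absorbed into this step. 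Finally, it should be stressed that $d\ge2$ is essential and not a technical artefact: a homeomorphism of $[0,1]$ is monotone, hence differentiable almost everywhere, so no perturbation can make it nowhere differentiable, and the shear construction has no one‑dimensional analogue.
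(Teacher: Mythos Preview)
Your overall framework matches the paper's: you write the nowhere-differentiable homeomorphisms as a dense $G_\delta$ via sets $\mathcal{D}_{k,n}$, and your openness argument (compactness, passing to a convergent subsequence of witness points) is exactly the paper's argument for the sets $\iG_n$.

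The density step is where you diverge. The paper composes on the \emph{right}: given $g$ it builds $f\in B(\id,2\eps)$ so that $g\circ f$ is nowhere differentiable (this is Theorem~\ref{t:nowhere}, stated more generally for compact families $\iK$ so as to feed into the compact-catcher result later). The construction is a single map: pass via a bi-Lipschitz equivalence from $[0,1]^d$ to the cylinder $B(\mathbf{0},1)\times[0,1]^{d-2}$ and set $f(r,\alpha,y)=(h(r),\alpha+\varphi(r)\bmod 2\pi,y)$, where $\varphi$ (from Corollary~\ref{c:easy}) has oscillation at least $4\sqrt{s_n}$ on every interval of length $2^{-n}$ and $h$ is a radial homeomorphism with $h(2^{-n})=s_n$. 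The point is that in these polar coordinates the angular twist produces the required blow-up at every $r>0$ (including $r=1$, which corresponds to $\partial[0,1]^d$), while the radial contraction $h$ handles $r=0$; no boundary casework, no collars, no hierarchy.

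Your left-composition-with-shears approach is plausible, and your diagnosis of the difficulties is accurate, but what you have is a plan rather than a proof: the transition between the shear's active region and the collar's active region, and the non-interference of the several shears and collars you compose, are exactly the delicate points, and you defer them. If you want to push this through, note that for fixed $k,n$ a finite composition should suffice (you do not need an infinite hierarchy to land in a single $\mathcal{D}_{k,n}$); and the main thing to arrange is that the collar's steep zone overlaps the region where the shear's damping in the transverse coordinate has already kicked in. But the paper's cylindrical twist sidesteps all of this with one map, and simultaneously gives the uniform version over compact $\iK$ that you would still need for Corollary~\ref{c:dif}.
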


We will only need the full power of the following theorem in the next section. Here we only use it for the one-element compact sets $\iK=\{g\}$. 

\begin{theorem} \label{t:nowhere} 
Let $d\geq 2$ be an integer. Let $\iK\subseteq \HomeoId$ be a compact set and let $\varepsilon>0$. Then there exists a homeomorphism $f\in B(\id, 2\varepsilon)$ such that $g\circ f$ is nowhere differentiable for all $g\in \iK$. Moreover, for all $g\in \iK$ and $x\in [0,1]^d$,
\begin{equation} \label{eq1}
\limsup_{y\to x} \frac{|g(f(x))-g(f(y))|}{|x-y|}=\infty.
\end{equation} 
\end{theorem}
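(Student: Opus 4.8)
The plan is to first reduce, using the compactness of $\iK$, to constructing a single homeomorphism $f$ with a $g$-free expansion property, and then to build $f$ explicitly as a convergent infinite composition of simple ``stretching'' homeomorphisms. Since inversion $g\mapsto g^{-1}$ is a $\varrho$-isometry of $\HomeoId$, the set $\iK^{-1}=\{g^{-1}:g\in\iK\}$ is also compact, so by Arzel\`a--Ascoli both $\iK$ and $\iK^{-1}$ are equicontinuous; after adding $s$ to a common modulus we may take a strictly increasing continuous $\omega$ with $\omega(0)=0$, $\omega(s)\ge s$, such that $|g^{\pm1}(u)-g^{\pm1}(v)|\le\omega(|u-v|)$ for all $g\in\iK$ and all $u,v$. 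Writing $\psi=\omega^{-1}$ and substituting $a=g(u)$, $b=g(v)$ into $|g^{-1}(a)-g^{-1}(b)|\le\omega(|a-b|)$ gives $|g(u)-g(v)|\ge\psi(|u-v|)$ for every $g\in\iK$, with $\psi$ increasing and $\psi(t)>0$ for $t>0$. Fix a sequence $\sigma_n\downarrow0$ with $\sum_n\sigma_n<\varepsilon$. It then suffices to produce $f\in B(\id,2\varepsilon)$ such that for every $x\in[0,1]^d$ there are infinitely many $n$ admitting a $y$ with $0<|x-y|<\psi(\sigma_n)/n$ and $|f(x)-f(y)|\ge\sigma_n$: for such an $f$, any $g\in\iK$ and any $x$, the corresponding witnesses $y_n\to x$ satisfy $|g(f(x))-g(f(y_n))|\ge\psi(\sigma_n)>n\,|x-y_n|$, so $\limsup_{y\to x}|g(f(x))-g(f(y))|/|x-y|=\infty$, which is \eqref{eq1}, and \eqref{eq1} forces $g\circ f$ to be nowhere differentiable.

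The heart is a local construction, and this is where $d\ge2$ enters. For a small cube $Q$ of side $\ell$ and a scale $0<r\ll\ell$, I would use the homeomorphism $\Phi_{Q,r,\ell}$ of $Q$ that is the identity on $\partial Q$ and, on a large concentric subcube, acts in two fixed coordinates as the tapered high-frequency shear $(x_1,x_2)\mapsto(x_1+c\ell\sin(x_2/r),x_2)$ and as the identity in the remaining coordinates, the taper reducing the amplitude to $0$ across a thin collar so as to match the identity on $\partial Q$; here $c$ is a small absolute constant. This map has amplitude $O(\ell)$, extends by the identity to a homeomorphism of $[0,1]^d$, and has the key property that for every $x$ in the core of $Q$ there is a $y$ with $|x-y|=O(r)$ and $|\Phi(x)-\Phi(y)|\ge c\ell$ — move $y$ in the $x_2$-direction by about half a period of $\sin(\cdot/r)$, changing the first coordinate by $\sim c\ell$ while moving $x$ only $O(r)$. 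Note that a homeomorphism of a bounded domain cannot expand \emph{every} small ball in \emph{all} directions (a volume obstruction), which is precisely why the gadget must stretch in one direction and compress transversally, and why no analogue works for $d=1$, where homeomorphisms are monotone and hence differentiable almost everywhere.

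For the assembly, choose $\ell_k\downarrow0$ fast with $\ell_k$ comparable to $\sigma_k$ and $r_k$ so small that the scale at which stage $k$ works is $<\psi(\sigma_k)/k$, and set $f=\lim_k(\Psi_k\circ\cdots\circ\Psi_1)$. Here $\Psi_k$ is a composition of $2^d$ gadget layers supported on the $2^d$ translates (by vectors in $\{0,\ell_k/2\}^d$) of a mesh‑$\ell_k$ grid. The shear directions are assigned so that every $x$ lies in the core of at least one layer — if $x$ is within the collar radius of the unshifted grid in exactly the coordinates forming a set $J$, it lies in the core of the layer translated by $\tfrac{\ell_k}2$ precisely in the coordinates of $J$ — and the layers are ordered (all shears of coordinate $1$ first, then of coordinate $2$, and so on, in decreasing amplitude) so that no later layer can destroy a jump recorded by an earlier one: a layer shearing coordinate $i$ changes only the $i$‑th coordinate, hence cannot touch a jump registered in a different coordinate, while two layers shearing the same coordinate interfere only by a controlled amount when the later one is much weaker. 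This way every $x$ is caught at essentially every stage $k$ with a jump $\gtrsim\sigma_k$ at scale $<\psi(\sigma_k)/k$, as the reduction requires, and $\|f-\id\|\le\sum_k\|\Psi_k-\id\|\le\sum_k\sigma_k<\varepsilon$.

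The main obstacle is two-fold. First, one must see that the limit is a genuine homeomorphism, not merely a continuous map; this is standard once the supports and amplitudes shrink fast enough, by also keeping the inverses $\left(\Psi_k\circ\cdots\circ\Psi_1\right)^{-1}$ uniformly Cauchy. Second, and more delicate, is that the grids truncate along $\partial([0,1]^d)$, so the construction as described catches every point \emph{except} those on $\partial([0,1]^d)$ and, at each stage, near the part of the grid skeleton lying on it; getting the conclusion for \emph{all} $x$ — rather than merely for almost every $x$ — I would handle by an additional recursive construction on the lower-dimensional faces, supplemented near the cube's vertices by finitely many extra ``finger'' perturbations per stage whose witnesses are steered into the interior (where the main mechanism already catches them). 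Carrying this out while preserving the homeomorphism property and the mutual non-interference of all the perturbations is the technical core of the argument.
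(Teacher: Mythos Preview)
Your reduction via equicontinuity of $\iK$ and $\iK^{-1}$ to a $g$-free expansion property of $f$ is correct and parallels the paper's use of the sequence $s_n$. The construction of $f$, however, is entirely different. The paper does not build $f$ as an infinite composition of grid-supported shears. Instead it passes, by a bi-Lipschitz change of coordinates, from $[0,1]^d$ to $T=B(\mathbf{0},1)\times[0,1]^{d-2}$ and writes down a \emph{single} explicit homeomorphism in polar coordinates,
\[
f(r,\alpha,y)=(h(r),\,\alpha+\varphi(r)\bmod 2\pi,\,y),
\]
where $h$ is an increasing homeomorphism of $[0,1]$ and $\varphi\colon[0,1]\to[0,\varepsilon]$ is a continuous function whose oscillation on every interval of length $2^{-n}$ is at least $4\sqrt{s_n}$ (this $\varphi$ comes from a short one-variable lemma). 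The angular shear $\alpha\mapsto\alpha+\varphi(r)$ plays exactly the role of your high-frequency gadget, but because it is global and rotation-like, \emph{every} point of $T$ automatically acquires the required witnesses: for $r>0$ move along the radial direction to change $\varphi(r)$ and hence the angle; for the single point $r=0$ use the radial stretch $h$ directly. There is no grid, no layering, no infinite composition, and no boundary case analysis.

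Your scheme is plausible in outline, but the two issues you yourself flag are real and not yet resolved. First, your claim that within a single stage the $2^d$ overlapping layers ``interfere only by a controlled amount when the later one is much weaker'' is in tension with the fact that all $2^d$ layers at stage $k$ naturally carry comparable amplitude $\sim\sigma_k$; forcing them to be geometrically decreasing within a stage is possible, but then the scale $r_k$ must be tuned against the \emph{smallest} amplitude $\sigma_k/4^{2^d}$, and the details of which layer catches a given $x$ must be tracked through the remaining compositions. Second, the boundary treatment via ``recursive construction on the lower-dimensional faces'' together with ad hoc ``finger perturbations'' near vertices is only a promissory note; on the $1$-skeleton your shear mechanism is unavailable, and you would have to manufacture witnesses pointing into the interior while keeping them inside $[0,1]^d$ and compatible with all other layers. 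The paper's polar-coordinate device sidesteps every one of these difficulties at once, which is what it buys: a two-line verification in place of a multi-scale assembly.
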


First we prove the following elementary lemma. We will only use its consequence Corollary~\ref{c:easy} in this section, the more general statement will be needed in Section~\ref{s:Hd}.   

\begin{lemma} \label{l:s_n}
Assume that $s_n\downarrow 0$. Then there exist a sequence $q_n\downarrow 0$ and a continuous function $\varphi\colon [0,1]\to [0,1]$ such that for all $n\in \N$ if $I,J$ are intervals of length $2^{-n}$ and $s_n$, respectively, then 
\begin{equation*}
\lambda(\{z\in I: \varphi(z)\in J\})\leq q_n\lambda(I).
\end{equation*}
\end{lemma}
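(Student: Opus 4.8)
The plan is to build $\varphi$ as a uniform limit of piecewise linear functions, where at stage $n$ the graph is refined to have many short steep segments together with short nearly-flat connecting pieces, so that the preimage of any short interval $J$ of length $s_n$ meets any dyadic interval $I$ of length $2^{-n}$ in small measure. First I would choose a rapidly increasing sequence of integers $M_n$ (to be determined) and construct $\varphi$ so that on each dyadic interval of length $2^{-n}$ it oscillates between $0$ and $1$ roughly $M_n$ times, with the bulk of each period spent on steep ``transition'' segments of slope of order $\pm M_n 2^{n}$ and only a small fraction of the length spent where the value lies in any fixed window of height $s_n$. Concretely, a function whose graph on $I$ consists of $M_n$ up-down zig-zags spanning the full height $[0,1]$ has the property that $\{z \in I : \varphi(z) \in J\}$ consists of about $2 M_n$ subintervals, each of length about $\frac{|J|}{M_n 2^n \cdot 1}\,$-ish times the relevant scale; summing, $\lambda(\{z \in I : \varphi(z)\in J\}) \lesssim s_n \lambda(I)$ up to constants, so we may take $q_n$ comparable to $s_n$ (then pass to a decreasing sequence $\geq$ these values, still tending to $0$).

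The subtlety is doing this \emph{consistently across all $n$ simultaneously}, since the same function $\varphi$ must work for every scale. The key point is that the zig-zag at scale $n$ must be ``invisible'' at scale $n+1$ and coarser: once we have committed to the height-$[0,1]$ oscillation pattern at scale $n$, refining further inside each monotone segment only helps, because a steeper pattern superimposed on a steep segment still spends a small fraction of its domain in any window $J$. So the natural construction is recursive: having defined the stage-$n$ piecewise linear $\varphi_n$, on each of its maximal monotone linear pieces (a segment of slope $\sigma$ over a domain interval of length $\ell$, with $\ell \leq 2^{-n}$) we replace $\varphi_n$ by a zig-zag that oscillates within the \emph{range} of that segment, finely enough that for the next relevant scale the measure bound holds; we keep $\|\varphi_{n+1}-\varphi_n\|_\infty$ summable so the limit exists and is continuous. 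The main obstacle I anticipate is the bookkeeping to guarantee that the estimate at scale $n$ survives \emph{all} later refinements: one must show that once a monotone segment has been chopped into $M_n$ range-spanning zig-zags, any further subdivision can only decrease the measure of $\varphi^{-1}(J)\cap I$ relative to what it would be for the limit, i.e. the bad set does not ``accumulate'' across scales. This is handled by arranging that on each stage-$n$ monotone piece the future oscillations stay \emph{within the range of that piece}, so that a window $J$ of height $s_n$ can only be hit on those sub-pieces whose range meets $J$, and the total domain length of such sub-pieces is controlled by $s_n/(\text{slope})\cdot(\text{number of crossings})$, exactly the quantity we bounded.

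After fixing the growth rates — e.g. choosing $M_n$ so large that $M_n \geq 1/s_n$ and also large enough to absorb the constants $C_d$-type factors and the geometric sum of errors — one verifies: (i) $\varphi = \lim \varphi_n$ is a well-defined continuous map $[0,1]\to[0,1]$; (ii) for the resulting $q_n \downarrow 0$ (take $q_n = \sup_{m \geq n}(\text{stage-}m\text{ ratio})$, which tends to $0$), and for any interval $I$ of length $2^{-n}$ and any interval $J$ of length $s_n$, the set $\{z \in I : \varphi(z) \in J\}$ is covered by the union, over stage-$n$ monotone pieces $P$ meeting $I$ whose range meets $J$, of $P$-portions, and on each such $P$ the crossing count times $s_n/\mathrm{slope}(P)$ gives the bound $q_n \lambda(I)$. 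The only genuinely non-routine step is item (ii)'s uniformity over \emph{all} $J$ of length $s_n$ simultaneously with a single $\varphi$, which is why the recursive ``oscillate only within the current range'' discipline is essential; everything else is elementary estimation with piecewise linear functions.
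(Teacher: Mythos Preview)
Your plan has the right skeleton (a uniform limit of piecewise linear $\varphi_n$, with the estimate surviving the limit), but there is a genuine tension you have not resolved. You want, on every interval of length $2^{-n}$, full-height oscillation ``between $0$ and $1$ roughly $M_n$ times''; then every maximal monotone piece of $\varphi_n$ has range all of $[0,1]$, so your refinement rule ``oscillate within the range of that segment'' is vacuous. Replacing such a monotone piece $P$ (on which $\varphi_n$ runs linearly from $0$ to $1$) by a zig-zag $\varphi_{n+1}|_P$ that again hits $0$ and $1$ several times forces $\|\varphi_{n+1}-\varphi_n\|_\infty$ to be bounded below by a constant: any interior zero of $\varphi_{n+1}$ lies at a point where $\varphi_n$ is bounded away from $0$. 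Hence the sequence you describe cannot converge uniformly, and the sentence ``we keep $\|\varphi_{n+1}-\varphi_n\|_\infty$ summable'' is simply incompatible with the rest of your construction. Your slope-and-crossing heuristic only gives the bound when the oscillation amplitude is comparable to $1$; once you shrink the amplitude to obtain convergence, a window $J$ of height $s_n$ may swallow an entire monotone piece and the crossing-count argument collapses.

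The paper resolves precisely this tension, but not via steep slopes. It works only along a sparse sequence of domain scales $a_m$ (chosen so that $s_{a_m}\le 2^{-2^{m+1}}$) and makes the stage-$m$ correction have amplitude $2^{-2^{m-1}}$: wherever $\varphi_{m-1}$ sits inside a $2^{m-1}$-dyadic range interval $J'$, one defines $\varphi_m$ to stay inside $J'$ (so $\|\varphi_m-\varphi_{m-1}\|\le |J'|=2^{-2^{m-1}}$, giving convergence) while \emph{equidistributing} domain measure among the $2^{2^{m-1}}$ many $2^m$-dyadic subintervals of $J'$. Any target window $J$ of length $s_n$ with $a_m\le n<a_{m+1}$ is then covered by a bounded number of $2^{m+1}$-dyadic intervals, and the equidistribution bound together with the tail estimate $\|\varphi-\varphi_{m+1}\|\le 2\cdot 2^{-2^{m+1}}$ transfers the inequality to the limit $\varphi$. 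The missing idea in your proposal is this decoupling: the amplitude must shrink (for convergence) while the equidistribution granularity inside that shrinking amplitude shrinks much faster (for the measure bound), and one only attempts the estimate at the sparse scales $a_m$ where $s_n$ is already tiny compared to the current granularity.
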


\begin{corollary}\label{c:easy} Assume that $s_n\downarrow 0$. Then there exist $N\in \N$ and a continuous function $\varphi \colon [0,1]\to [0,1]$ such that for all $n\geq N$ the oscillation of $\varphi$ on any interval of length $2^{-n}$ is at least $s_n$.
\end{corollary}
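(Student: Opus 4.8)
The plan is to obtain Corollary~\ref{c:easy} directly from Lemma~\ref{l:s_n} applied to the very same sequence $(s_n)$. Lemma~\ref{l:s_n} produces a sequence $q_n \downarrow 0$ and a continuous function $\varphi \colon [0,1] \to [0,1]$ such that for every $n$, every interval $I$ of length $2^{-n}$, and every interval $J$ of length $s_n$ one has $\lambda(\{z \in I : \varphi(z) \in J\}) \le q_n \lambda(I)$. Since $q_n \downarrow 0$, I would fix $N \in \N$ with $q_n < 1$ for all $n \ge N$, and claim that this $N$ together with this $\varphi$ satisfies the conclusion of the corollary.

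The only real step is a one-line contrapositive argument. Fix $n \ge N$ and an interval $I \subseteq [0,1]$ of length $2^{-n}$, and suppose towards a contradiction that the oscillation of $\varphi$ on $I$ is less than $s_n$. By continuity of $\varphi$ the image $\varphi(I)$ is a compact subinterval of $[0,1]$ of length less than $s_n$, hence $\varphi(I) \subseteq J$ for some interval $J$ of length exactly $s_n$. For this pair $(I,J)$ we get $\{z \in I : \varphi(z) \in J\} = I$, so Lemma~\ref{l:s_n} forces $\lambda(I) \le q_n \lambda(I) < \lambda(I)$, which is absurd. Hence $\osc_I \varphi \ge s_n$, as claimed. (If one prefers $J$ to be a subinterval of $[0,1]$, it suffices to also require $s_n \le 1$ for $n \ge N$, which holds after possibly enlarging $N$.)

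I do not expect a genuine obstacle at this stage: all the substance sits in Lemma~\ref{l:s_n}, whose proof presumably builds $\varphi$ by a self-similar, Cantor-type construction that makes it oscillate on every dyadic scale while still controlling the measure of level sets; the passage to the corollary is merely the observation that ``small oscillation of $\varphi$ on $I$'' means ``all of $I$ is mapped into one short interval $J$'', a situation the lemma excludes once $q_n < 1$. The main thing to be careful about is the bookkeeping with interval conventions (closed versus half-open, and whether $J$ must lie in $[0,1]$), none of which affects the argument.
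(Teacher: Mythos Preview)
Your argument is correct and is exactly the intended derivation: the paper does not even spell out a proof of Corollary~\ref{c:easy}, treating it as immediate from Lemma~\ref{l:s_n}, and the one-line contrapositive you give (small oscillation on $I$ forces $\varphi(I)\subseteq J$ for some $J$ of length $s_n$, contradicting $q_n<1$) is precisely that immediate step. Note also that the lemma does not require $J\subseteq[0,1]$, so your parenthetical caveat about enlarging $N$ is unnecessary.
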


\begin{proof}[Proof of Lemma~\ref{l:s_n}]
We call a closed interval $I \subseteq [0, 1]$ \emph{$m$-dyadic} if $\ell = [\frac{\ell}{2^m}, \frac{\ell+1}{2^m}]$ for some $\ell \in \N$, and we call it \emph{dyadic} if it is $m$-dyadic for some $m \in \N$. Let us fix a strictly increasing sequence $a_m$ of natural numbers such that
\begin{equation}
\label{e:am choice}
s_{a_m} \le 2^{-2^{m+1}}
\end{equation} 
for all $m\in \N$.
We construct piecewise linear continuous functions $\varphi_m\colon [0,1]\to [0,1]$ such that $\varphi_0\equiv 0$ and for all $m \geq 1$ we have
\begin{enumerate} 
\item \label{i1} $\|\varphi_{m}-\varphi_{m-1}\| \le 2^{-2^{m-1}}$,
\item \label{i2} if $I$ is an $a_m$-dyadic interval, and $J$ is a $2^{m}$-dyadic interval, then 
\begin{equation*} \lambda\left(\{x \in I : \varphi_{m}(x) \in J\}\right) \leq 2^{-2^{m-1}} \lambda(I).
\end{equation*}
\end{enumerate}
First we show that this is indeed enough. By \eqref{i1} the functions $\varphi_m$ uniformly converge, let $\varphi= \lim_{m \to \infty} \varphi_m$ be their limit. Then clearly $\varphi \colon [0,1]\to [0,1]$ is continuous. To check that $\varphi$ satisfies the lemma with some $q_n$, let $I$ and $J$ be intervals of length $2^{-n}$ and $s_n$ for some $n$, respectively. Clearly, if $n < a_0$, then we can set $q_n = 1$ to satisfy the conclusion of the lemma. Otherwise, let $m$ be chosen so that $a_m \le n < a_{m+1}$. Then one can find a sequence of $a_{m+1}$-dyadic intervals $I_1, \dots, I_k$ such that $I \subseteq \bigcup_{i=1}^k I_i$ and $I' = \bigcup_{i =1}^k I_i$ satisfies
\begin{equation} \label{e:ii}
 \lambda(I')\leq 3\lambda(I).
\end{equation}  
A $s_n$ is decreasing, \eqref{e:am choice} yields 
\begin{equation*} s_n  \le s_{a_m} \le 2^{-2^{m+1}}. 
\end{equation*} 
Hence, we can find a sequence of consecutive $2^{m+1}$-dyadic intervals $J_1, J_2, \dots, J_6$, so that $J \subseteq J_3 \cup J_4$. Let $J' = \bigcup_{j=1}^6 J_j$. 
By \eqref{i1} we obtain $\|\varphi - \varphi_{m+1}\| \le 2^{1-2^{m+1}}$, hence 
\begin{equation} \label{e:subs} 
\{x\in I : \varphi(x) \in J\} \subseteq \{x\in I' : \varphi(x) \in J\} \subseteq \{x\in I' : \varphi_{m+1}(x) \in J'\}.
\end{equation}
Using \eqref{e:subs}, applying \eqref{i2} to each pair of intervals $I_i$ and $J_j$, and \eqref{e:ii} yield that
\begin{align*} 
\lambda\left(\left\{x \in I : \varphi(x) \in J\right\}\right) &\leq \lambda\left(\left\{x \in I' : \varphi_{m+1}(x) \in J'\right\}\right) \\
&\leq 6\cdot 2^{-2^{m}} \lambda(I') \\
&\leq 18 \cdot 2^{-2^{m}} \lambda(I).
\end{align*} 
We can then define $q_n = 18\cdot 2^{-2^m}$ for the largest $m$ with $a_m \le n$. One can easily check that $q_n \downarrow 0$, which will finish the proof of the lemma. 

Finally, we construct the functions $\varphi_m$ satisfying \eqref{i1} and \eqref{i2}. If $m=0$ then $\varphi_0 \equiv 0$ is continuous and linear, while \eqref{i1} and \eqref{i2} are vacuous. Now let $m \ge 1$, and suppose that $\varphi_0, \dots, \varphi_{m-1}$ are already constructed satisfying our conditions. Our goal is to construct $\varphi_m$. Using that $\varphi_{m-1}$ is piecewise affine, we can decompose $[0,1]$ into  non-overlapping closed intervals $B_1, \dots, B_{\ell}$ such that $\varphi_{m-1}(B_i) \subseteq J'_i$ for some $2^{m-1}$-dyadic interval $J'_i$ for each $1\leq i\leq \ell$. Clearly, $[0, 1]$ is covered by non-overlapping  intervals of the form $I \cap B_i$, where $I$ is an $a_m$-dyadic interval and $1\leq i \le \ell$. Then it suffices to define $\varphi_m$ on each such interval separately, with the condition that $\varphi_m$ and $\varphi_{m-1}$ coincide on the boundary of such intervals. 
	
So fix such intervals $I$ and $B_i$. Let $J'_i$ be a $2^{m-1}$-dyadic interval such that $\varphi_{m-1}(B_i) \subseteq J'_i$. Clearly, one can define $\varphi_m$ on $I \cap B_i$ so that 
\begin{enumerate}[(i)]
\item \label{i} $\varphi_m|_{\partial (I\cap B_i)}=\varphi_{m-1}|_{\partial (I\cap B_i)}$,
\item \label{ii} $\varphi_m$ is piecewise linear,
\item \label{iii} $\varphi_m(I \cap B_i) \subseteq J'_i$,
\item \label{iv} for each $2^m$-dyadic interval $J$ with $J \subseteq J'_i$ we have 
\begin{equation*} \lambda(\{x \in I \cap B_i : \varphi_m(x) \in J\}) =2^{-2^{m-1}} \lambda(I \cap B_i).
\end{equation*} 
\end{enumerate}
To see that \eqref{iv} can indeed be satisfied, note that there are exactly $2^{2^{m-1}}$ many $2^m$-dyadic intervals inside $J'_i$. By \eqref{i} and \eqref{ii} our function $\varphi_m$ is a well defined, piecewise linear, continuous function. Then $\varphi_m$ satisfies \eqref{i1} because of \eqref{iii}, and \eqref{i2} follows from \eqref{iv}. This completes the proof. 
\end{proof}

\begin{proof}[Proof of Theorem~\ref{t:nowhere}] Fix $\iK$ and $\eps$ as above. 
Let $B=B(\mathbf{0},1)\subseteq \R^2$ be the closed unit disc on the plane and consider $T =B\times [0,1]^{d-2}$. Observe that $[0,1]^d$ is bi-Lipschitz equivalent to $T$. Indeed, it is enough to give a bi-Lipschitz map between $[-1,1]^2$ and $B$, for which take the radial homeomorphism which maps $\partial [-r,r]^2$ onto $\partial B(\mathbf{0},r)$ for all $0\leq r\leq 1$. Therefore, it is enough to prove \eqref{eq1} for $T$. 
For all $n\in \N^+$ define 
\begin{equation*}
s_n=\min\{0\leq s\leq 1: |g(t_1)-g(t_2)|\geq 1/n \text{ whenever } g\in \iK \text{ and } |t_1-t_2|\geq s\}.   
\end{equation*}
By the compactness of $\iK$ we have $s_n>0$ for all $n$, and $s_n \downarrow 0$ as $n\to \infty$. By Corollary~\ref{c:easy} there exist a continuous function $\varphi\colon [0,1]\to [0,\eps]$ and $N\in \N$ such that for all $n\geq N$ the oscillation of $\varphi$ on any interval of length $2^{-n}$ is at least $4\sqrt{s_n}$. By increasing $N$ if necessary we can assume that $\max\{2^{-n},4\sqrt{s_n}\}<\eps$ for all $n\geq N$. Let $h\colon [0,1]\to [0,1]$ be an increasing homeomorphism such that $h(0)=0$, $h(r)=r$ if $r>\eps$, and $h(2^{-n})=s_n$ for all $n\geq N$. Let us parametrize  
\begin{equation*}
T=\{(r,\alpha,y): 0\leq r\leq 1,\, 0\leq \alpha <2\pi,\, y\in [0,1]^{d-2}\}.
\end{equation*} 
Now we can define $f\colon T\to T$ by
\begin{equation*}
f(r, \alpha, y)=(h(r), \alpha+\varphi(r) \mod 2\pi, y).   
\end{equation*}
It is clear that $f$ is a homeomorphism. First we show that $f\in  B(\id, 2\varepsilon)$. Indeed, 
\begin{equation*}
|f(r, \alpha, y)-(r,\alpha, y)|\leq |h(r)-r|+\max\{h(r),r\} \varphi(r)\leq |h(r)-r|+\varphi(r)\leq  2\eps,
\end{equation*}
since $|h(r)-r|\leq \eps$ and $\varphi(r)\leq \eps$ by the definitions.

Now we prove \eqref{eq1}. Fix $t=(r,\alpha, y)\in T$. It is enough to find for all $n\geq N$ a $t_n\in T$ such that $|t-t_n|\leq 2^{-n}$ and $|f(t)-f(t_n)|\geq s_n$, then $|g(f(t))-g(f(t_n))\geq 1/n$ by the definition of $s_n$, so \eqref{eq1} clearly holds. First assume that $r=0$. Let $t_n=(2^{-n},\alpha,y)$, then $|t-t_n|=2^{-n}$, and $|f(t)-f(t_n)|\geq |h(0)-h(2^{-n})|=s_n$ for all $n\geq N$, and we are done.  
Finally, assume that $r>0$. Let $t_n=(r_n,\alpha, y)$ such that $|r-r_n|\leq 2^{-n}$ and $|\varphi(r)-\varphi(r_n)|\geq 2\sqrt{s_n}$. By elementary geometry
\begin{equation*}
|f(t)-f(t_n)|\geq \min\{r,r_n\} 2\sin(\sqrt{s_n}) \geq r\sqrt{s_n}\geq s_n
\end{equation*}
for all large enough $n$ using that $s_n\to 0$ and $r_n\to r$ as $n\to \infty$. This completes the proof. 
\end{proof}

\begin{proof}[Proof of Theorem~\ref{t:gen}]
Define 
\begin{equation*}
\iG=\left\{f \in \HomeoId: \limsup_{y\to x} \frac{|f(x)-f(y)|}{|x-y|}=\infty \text{ for all } x\in [0,1]^d\right\}.
\end{equation*}
The elements of $\iG$ are clearly nowhere differentiable. Theorem~\ref{t:nowhere} yields that $\iG$ is dense in $\HomeoId$, so it is enough to prove that $\iG$ is $G_{\delta}$. It is easy to show that $\iG=\bigcap_{n=1}^{\infty} \iG_n$, where 
\begin{equation*}
\iG_n=\{f\in \HomeoId: \forall x\in [0,1]^d \, \exists y\in U(x,1/n) \text{ s.\,t. } |f(x)-f(y)|>n|x-y|\}. 
\end{equation*}
Fix $n\in \N^+$, it is sufficient to show that $\iG_n^c$ is closed. Let $f_k\in \iG_n^c$ be a sequence such that $f_k\to f$ uniformly as $k\to \infty$, we need to prove that $f\in \iG_n^c$. By definition, for each $k$ there exists $x_k\in [0,1]^d$ such that $|f_k(x_k)-f_k(y)|\leq n|x_k-y|$ for all $y\in U(x_k,1/n)$. By choosing a convergent subsequence we may assume that $x_k\to x$ as $k\to \infty$. Let $y\in U(x,1/n)$ be arbitrarily fixed. Then $y\in U(x_k, 1/n)$ for all large enough $k$, so 
\begin{equation*}
|f(x)-f(y)|=\lim_{k\to \infty} |f_k(x_k)-f_k(y)|\leq \lim_{k\to \infty} n|x_k-y|=n|x-y|.
\end{equation*}
Thus $f\in \iG_n^c$, and the proof is complete. 
\end{proof} 

Having dealt with strong singularity, now we turn to singularity.

\begin{theorem} \label{t:gensing1}
 For each $d\geq 1$ the generic $f\in \HomeoId$ is singular. 
\end{theorem}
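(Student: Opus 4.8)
The plan is to follow the template of Claim~\ref{c:generic continuous function nowhere dense set}: fix once and for all an $F_{\sigma}$ set $F=\bigcup_{k=1}^{\infty}C_k\subseteq[0,1]^d$ with $\lambda^d(F)=1$ and each $C_k\subseteq[0,1]^d$ compact and nowhere dense (say an increasing sequence of fat Cantor sets with $\lambda^d(C_k)\to 1$), and prove that the generic $f\in\HomeoId$ satisfies $\lambda^d(f(F))=0$, which by definition makes $f$ singular. For this it is enough to show that for all $k,n\in\N^+$ the set
\begin{equation*}
\iF_{k,n}=\{f\in\HomeoId:\lambda^d(f(C_k))<1/n\}
\end{equation*}
is dense and open, since then the generic $f$ lies in $\bigcap_{k,n}\iF_{k,n}$, hence $\lambda^d(f(C_k))=0$ for every $k$ and so $\lambda^d(f(F))\le\sum_k\lambda^d(f(C_k))=0$. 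Openness is immediate from the outer regularity of $\lambda^d$ and the compactness of $C_k$: if $f\in\iF_{k,n}$, cover $f(C_k)$ by an open set $U$ with $\lambda^d(U)<1/n$; then $g(C_k)\subseteq U$, so $g\in\iF_{k,n}$, for every $g$ with $\|g-f\|$ less than the distance between the compact set $f(C_k)$ and the complement of $U$.

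The key point is density, which I would reduce to a purely geometric lemma: \emph{for every compact nowhere dense $C\subseteq[0,1]^d$ and all $\eta,\delta>0$ there is a $\psi\in\HomeoId$ with $\|\psi-\id\|<\delta$ and $\lambda^d(\psi(C))<\eta$}. Granting this, density of $\iF_{k,n}$ follows by left composition: given $g\in\HomeoId$ and $\eps>0$, the image $g(C_k)$ is again compact and nowhere dense (as $g$ is a homeomorphism), so the lemma provides $\psi\in\HomeoId$ with $\|\psi-\id\|<\eps$ and $\lambda^d(\psi(g(C_k)))<1/n$; then $f=\psi\circ g$ satisfies $\|f-g\|\le\|\psi-\id\|<\eps$ and $f(C_k)=\psi(g(C_k))$, so $f\in\iF_{k,n}$ lies in the given neighbourhood of $g$ (recall that the topology of $\HomeoId$ is generated by the maximum metric).

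To prove the lemma I would subdivide $[0,1]^d$ into the grid of $m^d$ closed cubes of side $1/m$ with $m>\sqrt{d}/\delta$, and build $\psi$ one cube at a time, always taking $\psi$ equal to the identity on the boundary of each grid cube, so that the pieces glue to a homeomorphism of $[0,1]^d$ and automatically $\|\psi-\id\|\le\sqrt{d}/m<\delta$. On a single grid cube $Q$, nowhere density of $C$ gives a point $p\in\inter Q\setminus C$; one fixes $\eps_0>0$ with $\eps_0<\min\{\dist(p,\partial Q),\dist(p,C)\}$ and a $\theta\in(0,1)$ with $(1-\theta)^d>1-\eta$, and — using that $Q$ is star-shaped about $p$ — defines $\psi|_Q$ to be the radial homeomorphism of $Q$ fixing $\partial Q$ which in the coordinates $x=p+tv$, $|v|=1$, $t\in[0,T(v)]$ (where $p+T(v)v\in\partial Q$) sends $p+tv$ to $p+\tau_v(t)v$, with $\tau_v$ the piecewise-linear increasing self-homeomorphism of $[0,T(v)]$ determined by $\tau_v(0)=0$, $\tau_v(\eps_0)=(1-\theta)T(v)$, $\tau_v(T(v))=T(v)$; this is a homeomorphism because $v\mapsto T(v)$ is continuous. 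By construction $\psi|_Q$ maps the ball $\overline{B}(p,\eps_0)$ onto the scaled cube $p+(1-\theta)(Q-p)$, whose measure is $(1-\theta)^d\lambda^d(Q)>(1-\eta)\lambda^d(Q)$; since $\eps_0<\dist(p,C)$ the set $C\cap Q$ misses $\overline{B}(p,\eps_0)$, hence $\psi(C\cap Q)$ misses $p+(1-\theta)(Q-p)$ and so has measure $<\eta\,\lambda^d(Q)$. Summing over the grid cubes (which overlap only in a $\lambda^d$-null set, on which $\psi=\id$) gives $\lambda^d(\psi(C))<\eta$.

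The only genuinely non-routine ingredient is the idea of this reduction — that ``the generic homeomorphism is singular'' can be reached by composing an arbitrary $g$ with a near-identity homeomorphism flattening the nowhere dense set $g(C_k)$, and that the required local surgery inside a cube is carried out by a radial squeeze. The remaining steps ($F_{\sigma}$ bookkeeping, openness, gluing the local pieces into an honest homeomorphism) are routine, and the needed measurability — e.g.\ that $f(C_k)$ and $f(F)$ are Borel — is automatic since $f$ is a homeomorphism. One could instead route the argument through the equivalence \eqref{f:full to null}$\Leftrightarrow$\eqref{f:pushforward is singular to lebesgue} of Theorem~\ref{t:versions of singularity}, but the direct approach above seems cleanest.
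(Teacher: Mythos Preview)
Your proposal is correct and follows essentially the same route as the paper. The paper also reduces to showing that $\{f:\lambda^d(f(C))<\eps\}$ is dense open for every nowhere dense compact $C$, proves openness via outer regularity, and proves density by composing an arbitrary $g$ with a near-identity homeomorphism built exactly as you describe: a grid of small cubes, in each of which a radial homeomorphism blows up a ball disjoint from $g(C)$ so that $g(C)$ is pushed into a thin rim of small measure.
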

 
Clearly, for each $d$ there exists a meager set $M\subseteq [0,1]^d$ with $\lambda^d(M)=1$. Thus Theorem~\ref{t:gensing1} follows from the statement below, which is probably known. In dimension $1$ it is indeed the case, see e.g.~\cite[Theorem 13.1]{Ox}. In any case, we include a proof for the sake of completeness.
 
 \begin{theorem} Assume that $d\geq 1$ and 
 $M\subseteq [0,1]^d$ is meager. Then the generic $f\in \HomeoId$ satisfies $\lambda^d(f(M))=0$.
 \label{t:gen_sing}
 \end{theorem}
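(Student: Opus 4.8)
I would first reduce the statement to a claim about a single closed nowhere dense set. Since $M$ is meager, $M\subseteq\bigcup_{k\ge 1}N_k$ for some closed nowhere dense $N_k\subseteq[0,1]^d$, and $f(M)\subseteq\bigcup_k f(N_k)$; as a countable intersection of co-meager sets is co-meager, it suffices to prove that for each fixed closed nowhere dense $N\subseteq[0,1]^d$ the set $\{f\in\HomeoId:\lambda^d(f(N))=0\}$ is co-meager. Because
\[
\{f\in\HomeoId:\lambda^d(f(N))=0\}=\bigcap_{n=1}^{\infty}\iF_{N,1/n},\qquad\text{where }\iF_{N,\eps}\defeq\{f\in\HomeoId:\lambda^d(f(N))<\eps\},
\]
it is enough to show that each $\iF_{N,\eps}$ is open and dense. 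Openness is routine: given $f$ with $\lambda^d(f(N))<\eps$, pick an open $U\supseteq f(N)$ with $\lambda^d(U)<\eps$ by outer regularity; since $f(N)$ is compact, some $\delta$-neighbourhood of it lies in $U$, so $\|g-f\|<\delta$ forces $g(N)\subseteq U$, and hence $\iF_{N,\eps}$ is a neighbourhood of $f$ in the sup-metric, which generates the topology of $\HomeoId$.

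For density, fix $g_0\in\HomeoId$ and $\eta>0$. It is enough to find $\psi\in\HomeoId$ with $\lambda^d(\psi(g_0(N)))<\eps$ and with $\|\psi-\id\|$ and $\|\psi^{-1}-\id\|$ both below a prescribed $\eta'>0$: then $f\defeq\psi\circ g_0\in\iF_{N,\eps}$, and if $\eta'$ is chosen small using the uniform continuity of $g_0^{-1}$, the numbers $\|f-g_0\|\le\|\psi-\id\|$ and $\|f^{-1}-g_0^{-1}\|$ are both small, so $\varrho(f,g_0)<\eta$. Thus the whole theorem reduces to the following \emph{squeezing lemma}: for every closed nowhere dense $A\subseteq[0,1]^d$ and all $\eps,\eta'>0$ there is $\psi\in\HomeoId$ with $\lambda^d(\psi(A))<\eps$ and $\max\{\|\psi-\id\|,\|\psi^{-1}-\id\|\}<\eta'$ (apply it with $A=g_0(N)$).

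To prove the squeezing lemma I would chop the cube into a fine dyadic grid and squeeze $A$ inside each small cube. Fix $k$ with $\sqrt d\cdot 2^{-k}<\eta'$ and let $\{C\}$ be the $2^{dk}$ closed dyadic subcubes of $[0,1]^d$ of side $2^{-k}$. For each $C$ with $C\cap A\ne\emptyset$, the open set $\inter C\setminus A$ is non-empty (as $\inter A=\emptyset$), so we may fix a point $p_C\in\inter C$ and a closed ball $R_C=B(p_C,\rho_C)\subseteq\inter C$ disjoint from $A$; thus $A\cap C\subseteq C\setminus\inter R_C$. The key point, discussed below, is that there is a homeomorphism $\varphi_C\colon C\to C$ equal to the identity on $\partial C$ with $\lambda^d(\varphi_C(C\setminus\inter R_C))<\tfrac{\eps}{2}\lambda^d(C)$. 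Granting this, set $\psi|_C=\varphi_C$ on the cubes meeting $A$ and $\psi|_C=\id$ on the rest; the pieces agree (with $\id$) on the shared faces, so $\psi$ is a well-defined continuous bijection of $[0,1]^d$, hence a homeomorphism, each $\varphi_C$ maps $C$ onto $C$ so $\psi$ and $\psi^{-1}$ move points by at most $\sqrt d\,2^{-k}<\eta'$, and
\[
\lambda^d(\psi(A))\le\sum_{C:\,C\cap A\ne\emptyset}\lambda^d\big(\varphi_C(A\cap C)\big)\le\sum_{C}\tfrac{\eps}{2}\lambda^d(C)=\tfrac{\eps}{2}<\eps.
\]

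The only genuinely non-formal ingredient — and thus the main obstacle — is the construction of $\varphi_C$, i.e.\ that a small interior ball can be ``blown up'' to almost fill the cube by a homeomorphism fixing the boundary; but this is elementary. Normalise $C=[0,1]^d$ with $p=p_C$ in its interior; for a unit vector $u$ let $\rho_C(u)$ be the distance from $p$ to $\partial C$ along the ray $p+su$, a positive continuous function of $u$ by convexity. Choose $t\in(0,1)$ with $(1-t)^d>1-\tfrac{\eps}{2}$ and, after shrinking $\rho_C$ if necessary, $(1-t)\min_u\rho_C(u)>\rho_C$, and define $\varphi_C$ in polar coordinates about $p$ by sending the point at radius $s$ in direction $u$ to the point at radius $g_u(s)$ in direction $u$, where $g_u\colon[0,\rho_C(u)]\to[0,\rho_C(u)]$ is the increasing piecewise-linear homeomorphism with $g_u(0)=0$, $g_u(\rho_C)=(1-t)\rho_C(u)$, $g_u(\rho_C(u))=\rho_C(u)$. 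Continuity of $\rho_C$ in $u$ (together with uniform boundedness of the slopes of the $g_u$, which handles continuity at $p$) makes $\varphi_C$ a homeomorphism of $C$ fixing $\partial C$, and it carries $C\setminus\inter R_C$ onto $C\setminus\inter R'_C$, where $R'_C=p+(1-t)(C-p)$ is a cube of volume $(1-t)^d\lambda^d(C)>(1-\tfrac{\eps}{2})\lambda^d(C)$, giving the stated bound. In dimension $1$ this recipe degenerates to the classical argument of contracting the two intervals adjacent to a gap of $A$, so the proof is essentially a geometric repackaging of \cite[Theorem~13.1]{Ox} valid in all dimensions; everything besides the blow-up $\varphi_C$ is bookkeeping.
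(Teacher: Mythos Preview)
Your proposal is correct and follows essentially the same route as the paper's proof: reduce to a single closed nowhere dense set, show that $\{f:\lambda^d(f(N))<\eps\}$ is open by outer regularity, and obtain density by composing with a homeomorphism close to the identity that squeezes the nowhere dense image to small measure, the squeezing being done cube-by-cube on a fine grid via a radial blow-up of a small ball missed by the set. Your explicit polar-coordinate construction of $\varphi_C$ is more detailed than the paper's one-line appeal to a ``radial'' homeomorphism, but the underlying idea is identical.
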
 
First we need some lemmas.
\begin{lemma}
For any ball $B(x, r) \subseteq [0,1]^d$ and $\varepsilon > 0$ there exists $h \in \HomeoId$ that fixes every point of the boundary of $[0,1]^d$ such that $\lambda^d( h(B(x, r)) ) > 1 - \varepsilon$.
\end{lemma}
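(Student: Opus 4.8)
\emph{The plan.} The ball is essentially a red herring: since the open ball $U(x,r)$ is a nonempty open subset of $[0,1]^d$ it is contained in $(0,1)^d$, so the centre $x$ is an interior point of the cube and one can fix a tiny axis-parallel closed cube $Q=x+[-\rho,\rho]^d$ with $Q\subseteq U(x,r)$. Because $Q\subseteq B(x,r)$, for every homeomorphism $h$ we have $\lambda^d(h(B(x,r)))\ge\lambda^d(h(Q))$, so it suffices to produce $h\in\HomeoId$ fixing $\partial[0,1]^d$ pointwise with $\lambda^d(h(Q))>1-\eps$. As the target I would take the cube $Q'=[\eta,1-\eta]^d$, choosing $\eta>0$ first so small that $x\in(\eta,1-\eta)^d$ and $(1-2\eta)^d>1-\eps$, and then $\rho$ so small that in addition $Q\subseteq\inter Q'$. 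The one structural feature of this choice that matters is that $Q'$ lies \emph{strictly} inside $(0,1)^d$, so $\partial Q'\cap\partial[0,1]^d=\emptyset$.

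\emph{Construction of $h$.} I would build $h$ as a ``radial'' homeomorphism centred at $x$. Every $y\in[0,1]^d\setminus\{x\}$ is uniquely $y=x+tv$ with $v\in S^{d-1}$ and $0<t\le L(v)$, where $L(v)$ is the distance from $x$ to $\partial[0,1]^d$ in direction $v$; let $a(v)$ and $a'(v)$ be the distances from $x$ to $\partial Q$ and to $\partial Q'$ in direction $v$. These are just the radial functions about $x$ of the three convex bodies $Q\subseteq Q'\subseteq[0,1]^d$, and by the previous paragraph $0<a(v)<a'(v)<L(v)$ for all $v$. Put $h(x)=x$ and $h(x+tv)=x+\psi_v(t)v$, where $\psi_v\colon[0,L(v)]\to[0,L(v)]$ is the increasing piecewise-linear homeomorphism pinned at $\psi_v(0)=0$, $\psi_v(a(v))=a'(v)$, $\psi_v(L(v))=L(v)$. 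Ray by ray $\psi_v$ is a bijection carrying $[0,a(v)]$ onto $[0,a'(v)]$ and fixing $L(v)$, so $h$ is a bijection of $[0,1]^d$ that sends the $x$-chord of $Q$ in direction $v$ onto the $x$-chord of $Q'$; hence $h(Q)=Q'$ with $\lambda^d(h(Q))=(1-2\eta)^d>1-\eps$, while $h(y)=y$ for $y\in\partial[0,1]^d$ (there $t=L(v)$).

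\emph{Checking $h$ is a homeomorphism.} The radial functions $L,a,a'$ are continuous on the compact sphere $S^{d-1}$ and, again by compactness, are bounded away from $0$ and pairwise bounded apart; feeding this into the two affine pieces of $\psi_v$ shows that $(v,t)\mapsto\psi_v(t)$ is jointly continuous, so $h$ is continuous on $[0,1]^d\setminus\{x\}$. The only delicate point is continuity at $x$: for $t\le\inf_v a(v)$ the map $\psi_v$ is linear with slope $a'(v)/a(v)\le(\sup_v a'(v))/(\inf_v a(v))=:M<\infty$, hence $|h(y)-x|\le M|y-x|$ near $x$. A continuous bijection of a compact Hausdorff space is a homeomorphism, so $h\in\HomeoId$; by construction it fixes $\partial[0,1]^d$ and satisfies $\lambda^d(h(B(x,r)))\ge\lambda^d(Q')>1-\eps$.

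\emph{Main obstacle.} There is no real depth here, but the point one must not overlook is that $Q'$ has to be chosen strictly interior to the cube: if $\partial Q'$ touched $\partial[0,1]^d$ along some direction $v$ then $a'(v)=L(v)$ and the profile $\psi_v$ would fail to be injective, destroying the whole argument. Everything else is bookkeeping with the constants $\rho,\eta$ so that $Q\subseteq B(x,r)$, $Q\subseteq\inter Q'$, $Q'\subseteq(0,1)^d$ and $\lambda^d(Q')>1-\eps$ hold simultaneously, which is possible precisely because $x\in(0,1)^d$.
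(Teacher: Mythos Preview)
Your argument is correct and follows essentially the same approach as the paper: a radial homeomorphism centred at $x$ that pushes the boundary outwards while fixing $\partial[0,1]^d$. The paper's proof is a one-sentence sketch of exactly this idea; you have simply made it precise by routing through the auxiliary cubes $Q\subseteq Q'$ and writing down the piecewise-linear profile $\psi_v$ explicitly. One small phrasing issue: the sentence ``since the open ball $U(x,r)$ is a nonempty open subset of $[0,1]^d$ it is contained in $(0,1)^d$'' should be read as ``$U(x,r)$ is open in $\R^d$ and contained in $[0,1]^d$, hence lies in $\inter_{\R^d}[0,1]^d=(0,1)^d$''---as stated it could be misread as a (false) claim about relatively open subsets.
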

    
\begin{proof}
By choosing $h$ to be a `radial' homeomorphism fixing $x$ and moving the points away from $x$ (but fixing the boundary) we can obtain that $h([0,1]^d \setminus B(x, r))$ is in an arbitrarily small neighborhood of the boundary of $[0,1]^d$, hence its measure can be arbitrarily small.
\end{proof}

\begin{lemma}
For every nowhere dense set $K \subseteq [0,1]^d$ and for every $\varepsilon, \delta > 0$ there exists $h \in \HomeoId$ in the $\delta$-neighborhood of the identity with $\lambda^d( h(K) ) < \varepsilon$.
\end{lemma}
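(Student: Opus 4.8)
The plan is to localize: cut $[0,1]^d$ into a fine grid of closed subcubes, and on each subcube apply a rescaled copy of the previous lemma to push the part of $K$ lying in that subcube into a set of small measure.

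First I would replace $K$ by $\overline{K}$; since $K$ is nowhere dense, $\overline{K}$ is closed with empty interior, and it clearly suffices to prove the statement for $\overline{K}$ in place of $K$ (this also disposes of any measurability concern, as $h(\overline{K})$ will be compact). Fix $n\in\N^+$ large enough that subdividing $[0,1]^d$ into the $n^d$ congruent closed subcubes $Q_1,\dots,Q_{n^d}$ of side length $1/n$ gives $\diam Q_i=\sqrt{d}/n<\delta$ for every $i$. Because $\overline{K}$ has empty interior, for each $i$ the set $\inter Q_i\setminus\overline{K}$ is a nonempty open set, so it contains a closed ball $B_i$, and $B_i\cap\overline{K}=\emptyset$.

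Next, for each $i$ I would apply the previous lemma conjugated by the affine similarity $\sigma_i$ carrying $[0,1]^d$ onto $Q_i$: this produces a homeomorphism $h_i$ of $Q_i$ that fixes $\partial Q_i$ pointwise and satisfies $\lambda^d(h_i(B_i))>\lambda^d(Q_i)(1-\varepsilon)$, hence $\lambda^d(Q_i\setminus h_i(B_i))<n^{-d}\varepsilon$. Define $h\colon[0,1]^d\to[0,1]^d$ by $h|_{Q_i}=h_i$ for each $i$. This is a well-defined homeomorphism: distinct grid cubes meet only along parts of their common boundaries, where every $h_i$ is the identity, so the pieces agree on overlaps, and $h$ is then a continuous bijection of a compact space. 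For $x\in Q_i$ we have $h(x)=h_i(x)\in Q_i$, so $|h(x)-x|\le\diam Q_i<\delta$, that is, $h\in B(\id,\delta)$.

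Finally, since $B_i$ avoids $\overline{K}$ we get $\overline{K}\cap Q_i\subseteq Q_i\setminus B_i$, hence $h(\overline{K}\cap Q_i)\subseteq h_i(Q_i\setminus B_i)=Q_i\setminus h_i(B_i)$, which has measure $<n^{-d}\varepsilon$. Summing over the $n^d$ subcubes,
\begin{equation*}
\lambda^d(h(K))\le\lambda^d(h(\overline{K}))\le\sum_{i=1}^{n^d}\lambda^d\bigl(Q_i\setminus h_i(B_i)\bigr)<\varepsilon .
\end{equation*}
There is no serious obstacle once the ``fatten a ball'' lemma is in hand; the only points that need a little care are that this lemma transfers to each subcube by affine conjugation (so the pieces still fix the relevant boundary faces and therefore glue to a homeomorphism of $[0,1]^d$), and that nowhere density of $K$ is precisely what guarantees a $K$-avoiding ball inside every subcube of a sufficiently fine grid.
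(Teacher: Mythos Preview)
Your argument is correct and follows essentially the same approach as the paper's proof: subdivide into a fine grid, use nowhere density to find a $K$-free ball in each cell, apply the scaled previous lemma to fatten that ball, and glue along the fixed boundaries. You have simply written out in detail what the paper compresses into ``clearly works,'' including the helpful preliminary reduction to $\overline{K}$.
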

    
\begin{proof}
Choose $n \in \N^+$ such that the cubes in the $\frac1n$-grid are of diameter less than $\delta > 0$. In every cube in this $\frac1n$-grid, since $K$ is nowhere dense, we can fix a ball that is disjoint from $K$. In every such cube apply the (scaled version of the) previous lemma to this ball. Then the obtained homeomorphisms together form a homeomorphism $h$ that clearly works.
\end{proof}

\begin{lemma}
For every nowhere dense compact set $C \subseteq [0,1]^d$ and for every $\varepsilon > 0$ the set $\{ f\in \HomeoId : \lambda^d(f(C)) < \eps\}$ is dense open.
\end{lemma}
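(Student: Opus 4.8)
\noindent\emph{Proof proposal.} The plan is to treat openness and density of the set
$\mathcal{A}\defeq\{f\in\HomeoId:\lambda^d(f(C))<\eps\}$
separately, obtaining density as an almost immediate consequence of the preceding lemma (the one asserting that a nowhere dense set can be pushed to arbitrarily small measure by a homeomorphism arbitrarily close to the identity). Note first that $f(C)$ is compact, hence Borel, for every $f\in\HomeoId$, so $\mathcal A$ is genuinely defined.

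\emph{Openness.} Suppose $f\in\mathcal A$. Since $f(C)$ is compact, by the outer regularity of $\lambda^d$ I would choose an open set $U$ with $f(C)\subseteq U\subseteq\R^d$ and $\lambda^d(U)<\eps$, and then use the compactness of $f(C)$ together with $f(C)\subseteq U$ to find $\eta>0$ such that the $\eta$-neighborhood of $f(C)$ is contained in $U$. If $g\in\HomeoId$ satisfies $\|g-f\|<\eta$, then for each $x\in C$ we have $f(x)\in f(C)$ and $|g(x)-f(x)|<\eta$, hence $g(x)\in U$; thus $g(C)\subseteq U$ and $\lambda^d(g(C))\le\lambda^d(U)<\eps$. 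So the maximum-metric ball $B(f,\eta)$ is contained in $\mathcal A$, and since such balls are open in $\HomeoId$, the set $\mathcal A$ is open.

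\emph{Density.} Fix $f\in\HomeoId$ and $\delta>0$; since the maximum metric generates the topology of $\HomeoId$, it suffices to exhibit $g\in\mathcal A$ with $\|g-f\|<\delta$. Because $f$ is a homeomorphism of $[0,1]^d$ and $C$ is nowhere dense and compact, $f(C)$ is nowhere dense and compact as well (it is closed, and $\inter f(C)=f(\inter C)=\emptyset$). Applying the preceding lemma to the nowhere dense set $K=f(C)$ with parameters $\eps$ and $\delta$ produces $h\in\HomeoId$ with $\|h-\id\|<\delta$ and $\lambda^d(h(f(C)))<\eps$. Put $g=h\circ f\in\HomeoId$. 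Since $f$ maps $[0,1]^d$ onto $[0,1]^d$,
\[
\|g-f\|=\sup_{x\in[0,1]^d}|h(f(x))-f(x)|=\sup_{y\in[0,1]^d}|h(y)-y|=\|h-\id\|<\delta,
\]
while $\lambda^d(g(C))=\lambda^d(h(f(C)))<\eps$, so $g\in\mathcal A\cap B(f,\delta)$, as desired.

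I expect the density step to be the routine part, being a direct invocation of the previous lemma once one records the identity $\|h\circ f-f\|=\|h-\id\|$ (valid because $f$ is surjective), so that the closeness of $h$ to the identity transfers to the closeness of $g=h\circ f$ to $f$. The openness step is the only place calling for a short argument, namely combining outer regularity of $\lambda^d$ with the compactness of $f(C)$ to pass from ``$g$ uniformly close to $f$'' to ``$g(C)$ lies inside a prescribed open set of small measure''; beyond this there is no real obstacle, as the substantial work has already been done in the preceding lemma. (This dense open set is the building block for intersecting over a countable family of nowhere dense pieces and scales $1/n$ to obtain \autoref{t:gen_sing}, but that last bookkeeping step is standard Baire category and not part of the present claim.)
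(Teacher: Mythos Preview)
Your proof is correct and follows essentially the same route as the paper's: openness via outer regularity of $\lambda^d$ (you spell out the compactness/$\eta$-neighborhood argument that the paper only names), and density by applying the preceding lemma to $K=f(C)$ and post-composing with the resulting $h$. The only difference is cosmetic---you supply the justification that $f(C)$ is nowhere dense and the identity $\|h\circ f-f\|=\|h-\id\|$, both of which the paper leaves implicit.
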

    
\begin{proof}
This set is clearly open (by the outer regularity of Lebesgue measure), so it suffices to check that it is dense. Let $g \in \HomeoId$ and $\delta > 0$ be given. Apply the previous lemma with $K = g(C)$. Then $f = h \circ g$ is in the $\delta$-neighborhood of $g$, and $f(C) = h(g(C)) = h(K)$, hence $\lambda^d( f(C) ) = \lambda^d( h(K) ) < \varepsilon$.
\end{proof}

\begin{proof} [Proof of Theorem~\ref{t:gen_sing}]
Let $(C_n)_{n \in \N^+}$ be a sequence of nowhere dense compact sets in $[0,1]^d$ such that $M\subseteq \bigcup_{n=1}^{\infty} C_n$. Applying the previous lemma for every $C_n$ and for every $\varepsilon=\frac1k$ $(k \in \N^+)$ we obtain that $\lambda^d( f( M)) = 0$ holds for the generic $f \in \HomeoId$.
\end{proof}

\section{Singularity of prevalent homeomorphisms}
\label{s:prev_sing}
    
 In this section we prove that for all $d\geq 2$ the strongly singular homeomorphisms $f\in \HomeoId$ form a Haar ambivalent set and also that for every $d\geq 1$ the prevalent $f\in \HomeoId$ is singular. These results answer the prevalent case of Questions~\ref{q:sing} and \ref{q:strong_sing}.
 
\begin{theorem} \label{t:ambi}
Let $d\geq 2$ be an integer. The set 
\begin{equation*}
\iF=\{f\in    \HomeoId: f \text{ is strongly singular} \}
\end{equation*}
is Haar ambivalent. In fact, both $\iF$ and $\iF^c$ are compact catcher. 
\end{theorem}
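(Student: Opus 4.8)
The plan is to prove the stronger assertion that both $\iF$ and $\iF^c$ are compact catcher; since a compact catcher set is never Haar null, this shows at once that $\iF$ is neither Haar null nor prevalent, i.e.\ that it is Haar ambivalent. So let a compact set $\iK\subseteq\HomeoId$ be given; the task is to exhibit $g,h\in\HomeoId$ with $g\iK h\subseteq\iF$, and likewise $g',h'\in\HomeoId$ with $g'\iK h'\subseteq\iF^c$.

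For $\iF^c$ this is immediate from Theorem~\ref{t:nowhere}: applying it to this $\iK$ (with, say, $\varepsilon=1$) yields $h'\in\HomeoId$ such that $k\circ h'$ is nowhere differentiable for every $k\in\iK$. A nowhere differentiable map is certainly not strongly singular, since strong singularity requires differentiability at $\lambda^d$-almost every point and $\lambda^d([0,1]^d)=1>0$. Hence $\id\cdot\iK\cdot h'\subseteq\iF^c$, so $\iF^c$ is compact catcher.

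For $\iF$ the idea is to take $g=\id$ and find a single $h\in\HomeoId$ which is so flat on a set of full measure that post-composing it with any $k\in\iK$ still produces a map with vanishing derivative almost everywhere. By compactness $\iK$ has a common modulus of continuity: there is an increasing continuous function $\omega$ with $\omega(0)=0$ and $|k(u)-k(v)|\le\omega(|u-v|)$ for all $k\in\iK$ and $u,v\in[0,1]^d$. Put $\gamma(t)=\sup\{s\ge 0:\omega(s)\le t^2\}$, so that $\gamma$ is increasing with $\gamma(0+)=0$. It then suffices to construct $h\in\HomeoId$ and a Borel set $A\subseteq[0,1]^d$ with $\lambda^d(A)=1$ such that
\[
\limsup_{y\to x}\frac{|h(x)-h(y)|}{\gamma(|x-y|)}=0\qquad\text{for every }x\in A .
\]
Indeed, for $x\in A$, $k\in\iK$ and $y$ close enough to $x$ we then have $|h(x)-h(y)|\le\gamma(|x-y|)$, hence $|k(h(x))-k(h(y))|\le\omega(|h(x)-h(y)|)\le|x-y|^2=o(|x-y|)$, so $(k\circ h)'(x)$ exists and equals the zero matrix. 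Thus $k\circ h$ is differentiable with singular (in fact zero) derivative on the full-measure set $A$, i.e.\ $k\circ h\in\iF$, and therefore $\id\cdot\iK\cdot h\subseteq\iF$, as required.

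Such an $h$ exists because a homeomorphism may concentrate all of its stretching onto a $\lambda^d$-nullset and hence be made arbitrarily flat off that nullset. Concretely, I would first build a one-dimensional increasing homeomorphism $h_0\in\Homeo([0,1])$ together with a full-measure Borel set $A_0\subseteq[0,1]$ by a Cantor-type iteration: at stage $n$ one has finitely many ``active'' intervals, of lengths $\le\ell_n\downarrow 0$ and total length $\ge 1-2^{-n}$, each already mapped by $h_0$ onto a very short image interval; at stage $n+1$ one subdivides every active interval, retains most of its length as active, and maps the retained sub-intervals onto vastly shorter image intervals, leaving the discarded (short in length but large in image) piece to absorb the rest of the image. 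Choosing the contraction ratios aggressively enough, and discarding by a Borel--Cantelli argument the measure-zero set of points lying too close to the active boundaries infinitely often, one obtains that on $A_0=\bigcap_n(\text{active set})$ the displacement of $h_0$ beats $\gamma$. Finally one sets $h=h_0\times\cdots\times h_0$ and $A=A_0\times\cdots\times A_0$; since $\lambda^d(A)=1$ and each coordinate displacement beats $\gamma$, so does $h$ on $A$, up to the harmless factor $\sqrt d$ in the $\limsup$. The main obstacle is precisely this construction — arranging the iteration so that the super-flatness estimate genuinely holds pointwise on a full-measure set while $h_0$ remains an honest homeomorphism; once that is in place, the passage from ``both $\iF$ and $\iF^c$ are compact catcher'' to ``$\iF$ is Haar ambivalent'' is immediate.
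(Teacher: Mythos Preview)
Your proposal is correct and follows essentially the same approach as the paper: for $\iF^c$ you invoke Theorem~\ref{t:nowhere} exactly as the paper does (via its Corollary~\ref{c:dif}), and for $\iF$ you use the common modulus of continuity of $\iK$ to reduce to building a one-dimensional homeomorphism that is super-flat on a full-measure set, then take the $d$-fold product---precisely the content of the paper's Theorem~\ref{t:zero}. The paper carries out the construction you identify as ``the main obstacle'' via iterated Smith--Volterra--Cantor sets together with a Borel--Cantelli argument for boundary points, matching your sketch; your function $\gamma$ plays the same role as the paper's $\varphi$.
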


Theorem~\ref{t:nowhere} implies the following. 

\begin{corollary} \label{c:dif} Let $d\geq 2$ be an integer. The set 
\begin{equation*}
\{f\in \HomeoId: f \text{ is nowhere differentiable}\}
\end{equation*}
is compact catcher, so Haar positive.
\end{corollary}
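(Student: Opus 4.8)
The plan is to reduce everything to Theorem~\ref{t:nowhere} together with the Fact from Section~2 that a compact catcher set is not Haar null. Write $\mathcal{N}=\{f\in\HomeoId: f \text{ is nowhere differentiable}\}$; by that Fact it suffices to prove that $\mathcal{N}$ is compact catcher, i.e.\ that for every compact $\iK\subseteq\HomeoId$ there exist $g,h\in\HomeoId$ with $g\iK h\subseteq\mathcal{N}$, where the product is taken in the group $\HomeoId$, that is, with respect to composition.

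Given such a compact set $\iK$, I would apply Theorem~\ref{t:nowhere} to $\iK$ with $\varepsilon=1$, obtaining a single homeomorphism $f\in\HomeoId$ such that $g\circ f$ is nowhere differentiable for every $g\in\iK$. I would then take the identity as the left translate and $f$ as the right translate: for each $g\in\iK$ the map $\id\circ g\circ f=g\circ f$ is a homeomorphism of $[0,1]^d$ and is nowhere differentiable by the choice of $f$, so $\{\,\id\circ g\circ f : g\in\iK\,\}\subseteq\mathcal{N}$. Thus $\mathcal{N}$ catches $\iK$, and since $\iK$ was arbitrary, $\mathcal{N}$ is compact catcher; the Fact from Section~2 then yields that $\mathcal{N}$ is not Haar null, i.e.\ Haar positive.

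The only subtlety worth flagging is bookkeeping: Theorem~\ref{t:nowhere} produces one ``inner'' factor $f$ that works simultaneously against every ``outer'' factor from a prescribed compact family, whereas the definition of compact catcher allows a two-sided translate $g\iK h$. This is harmless — one simply uses the trivial translate on the left side — so no genuine obstacle arises here. All of the analytic work (the construction of $f$ via twisting in polar coordinates and the measure-theoretic Lemma~\ref{l:s_n}/Corollary~\ref{c:easy}) has already been carried out in the proof of Theorem~\ref{t:nowhere}, and this corollary, much like Theorem~\ref{t:ambi}, is then immediate.
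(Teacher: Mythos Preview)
Your proposal is correct and matches the paper's approach exactly: the paper simply states that Theorem~\ref{t:nowhere} implies the corollary, and your argument spells out precisely this implication by taking the identity as the left translate and the $f$ produced by Theorem~\ref{t:nowhere} as the right translate.
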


Theorem~\ref{t:ambi} follows from Corollary~\ref{c:dif} and Theorem~\ref{t:zero} below. First we need the following construction. 

\begin{definition} \label{d:SMC}
Let us define the \emph{Smith--Volterra--Cantor set} $K\subseteq [0,1]$ as follows. In the first step we remove the middle open interval of
length $\frac 14$ from $[0,1]$, and obtain two first level elementary intervals. After the $(n-1)$st step we have $2^{n-1}$ disjoint, closed $(n-1)$st level elementary intervals. In the $n$th step we remove the middle open intervals of length $2^{-2n}$ from each of them, and obtain $2^n$ disjoint, \emph{$n$th level elementary intervals}. We continue this procedure for all $n\in \mathbb{N}^{+}$, and the limit set is the Smith--Volterra--Cantor set $K$. A closed interval is an \emph{elementary interval of $K$} if it is an $n$th level elementary interval for some $n\geq 1$. We can define elementary intervals analogously for similar copies of $K$. Then 
\begin{equation*}
\lambda(K)=1-\sum_{n=1}^{\infty} 2^{n-1} \cdot 2^{-2n}=\frac 12.
\end{equation*}
For all $n\geq 1$ the length of the $n$th level elementary intervals equals 
\begin{equation} \label{e:bn}  
b_{n}=\frac{1}{2^n}\left(1-\sum
_{i=1}^{n}2^{i-1}\cdot 2^{-2i}\right)=2^{-(n+1)}+2^{-(2n+1)}\in [2^{-(n+1)},2^{-n}).
\end{equation}
\end{definition}

\begin{theorem} \label{t:zero}
The set
\begin{equation*}
\{ f\in    \HomeoId: f\text{ has zero derivative almost everywhere} \}
\end{equation*}
is compact catcher. 
\end{theorem}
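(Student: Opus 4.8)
The plan is to show that for every compact set $\iK\subseteq\HomeoId$ there exist $g,h\in\HomeoId$ with $g\iK h$ contained in the set of homeomorphisms having zero derivative a.e. Since composing a homeomorphism on the left or right by a fixed bi-Lipschitz homeomorphism does not affect whether the derivative vanishes a.e. (by the chain rule for almost-everywhere differentiable maps, the determinant of the composite derivative is a product, and a bi-Lipschitz map has derivative of bounded determinant from above and below a.e.), the hard content is to produce a single homeomorphism $\Phi\in\HomeoId$, built from the Smith--Volterra--Cantor construction, such that $\Phi\circ k$ has zero derivative a.e.\ for \emph{every} $k$ in a prescribed compact family — or more precisely, to exploit the compactness of $\iK$ to find a uniform modulus and then a suitable $g$. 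First I would reduce, as in the proof of Theorem~\ref{t:nowhere}, to working on $T=B\times[0,1]^{d-2}$ with $B$ the closed unit disc, since $[0,1]^d$ is bi-Lipschitz equivalent to $T$ and bi-Lipschitz changes of coordinates are harmless here.

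The core construction should be an analogue of Lemma~\ref{l:s_n} and Corollary~\ref{c:easy}, but now \emph{quantitative in measure rather than in oscillation}: using the intervals $b_n\in[2^{-(n+1)},2^{-n})$ from \eqref{e:bn}, I would build a homeomorphism $h_0\colon[0,1]\to[0,1]$ (a ``devil's-staircase-like'' map adapted to $K$) that is constant on each complementary gap of $K$, together with an angular perturbation $\varphi$ supplied by Lemma~\ref{l:s_n} with an appropriate choice of $s_n$. The homeomorphism $f$ of $T$ is then $f(r,\alpha,y)=(h_0(r),\alpha+\varphi(r)\bmod 2\pi,y)$, exactly in the style of the proof of Theorem~\ref{t:nowhere}, but with the parameters tuned so that the \emph{Jacobian} of $f$ is zero at a.e.\ point: on the full-measure set where $h_0'$ exists it equals $0$ (because $h_0$ is a singular monotone function flattening $K$'s gaps and squeezing mass onto a nullset), and the product structure makes $\det f'=h_0'(r)\cdot(\text{bounded})=0$ there. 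Then for $k\in\iK$, compactness gives a uniform control: $k$ maps small balls into small balls with a modulus independent of $k$, so one further radial-type homeomorphism $g$ (depending on the modulus, hence on $\iK$ but not on the individual $k$) can be postcomposed to guarantee $g\circ k\circ(\text{copy of }f)$ still has vanishing Jacobian a.e.

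The main obstacle I anticipate is the same one that makes Lemma~\ref{l:s_n} delicate: arranging simultaneously that (i) $f$ is a genuine homeomorphism (so the radial map $h_0$ must remain strictly increasing, which forbids it from being literally locally constant, forcing a careful limit-of-piecewise-linear construction as in Lemma~\ref{l:s_n}), (ii) the derivative vanishes a.e.\ (which wants $h_0$ to be as ``singular'' as possible), and (iii) the estimates are \emph{uniform} over the compact family $\iK$, so that a single pair $(g,h)$ works for all of $g\iK h$. Concretely, I would use the $5r$-covering theorem (as in the proof of \eqref{f:limsup image}$\Rightarrow$\eqref{f:full to null}) to convert ``$f$ squeezes the measure of small balls to near zero at scale $b_n$'' into ``$\lambda^d(f(F))=0$ for a full-measure $F$,'' then invoke the differentiability-a.e.\ of $f$ (coming from the product of a monotone function in $r$ and the smooth angular/identity parts) together with the implication \eqref{f:full to null}$\Rightarrow$\eqref{f:derivative} from Theorem~\ref{t:versions of singularity} to conclude that $\det f'=0$ a.e. Finally I would check that composing on either side by the bi-Lipschitz coordinate changes and by $g$ preserves this, completing the verification that the set is compact catcher and hence (by the Fact in the Preliminaries) not Haar null — indeed this gives exactly the ``$\iF$ is compact catcher'' half needed for Theorem~\ref{t:ambi}.
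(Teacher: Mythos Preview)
Your proposal has a genuine gap at the composition step. You build a map $f$ (in polar form) with singular derivative a.e., \emph{independent of $\iK$}, and then hope to absorb the compact family via a left-composed $g$ depending on $\iK$'s modulus. Neither side works. If $f'(x_0)=\mathbf{0}$ then $|f(y)-f(x_0)|=o(|y-x_0|)$; but for $k\in\iK$ one only has $|k(f(y))-k(f(x_0))|\le\omega(|f(y)-f(x_0)|)$ with $\omega$ the uniform modulus of continuity of $\iK$, and $\omega(o(r))$ need not be $o(r)$ unless $k$ is Lipschitz. Elements of $\iK$ are arbitrary homeomorphisms---by Theorem~\ref{t:gen} the generic one is nowhere differentiable---so no chain rule is available. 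A left-composed $g$ cannot repair this: $g$'s compression near the image point $k(f(x_0))$ varies with $x_0$ and $k$, and a homeomorphism of $[0,1]^d$ cannot compress uniformly at every point. The implication \eqref{f:full to null}$\Rightarrow$\eqref{f:derivative} you invoke requires a.e.\ differentiability, which you have for your $f$ but not for $g\circ k\circ f$. Note also that the theorem asks for the \emph{full} derivative to vanish, not just its determinant; your polar map $f(r,\alpha,y)=(h_0(r),\alpha+\varphi(r),y)$ has $\partial_\alpha f\neq 0$, so even for $f$ itself you would only get $\det f'=0$, not $f'=\mathbf{0}$.

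The paper avoids all of this by making the right-composed map already adapted to $\iK$. One defines the inverse modulus $\varphi(r)=\max\{s:|g(x)-g(y)|\le r^3\text{ for all }g\in\iK,\ |x-y|\le s\}$ and then constructs a one-dimensional increasing homeomorphism $f\colon[0,1]\to[0,1]$ (via an iterated Smith--Volterra--Cantor scheme) together with interval families $\iI_n$ at scale $2^{-n}$ covering almost every point, such that $\diam f(I)\le\varphi(\diam I)$ for each $I\in\iI_n$. Setting $F=f\times\cdots\times f$, one obtains $\diam(g\circ F)(B(x,4^{-n}))\le 2^{-3n}$ for every $g\in\iK$ on a full-measure set of $x$, which gives $(g\circ F)'(x)=\mathbf{0}_{d\times d}$ directly---no chain rule, no differentiability assumption on $g$, and only one-sided translation. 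The radial construction you borrow from Theorem~\ref{t:nowhere} is tuned to create oscillation, not uniform compression; here the simple product map is both correct and simpler.
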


\begin{proof} For the sake of simplicity endow $[0,1]^d$ with the maximum metric, and let $\iK\subseteq \HomeoId$ be an arbitrarily fixed compact set. We may assume that $\id\in \iK$. We will define $F\in \HomeoId$ such that $(g\circ F)'(x)=\mathbf{0}_{d\times d}$ for all $g\in \iK$ for almost every $x\in [0,1]^d$, where $\mathbf{0}_{d\times d}$ denotes the $d\times d$ zero matrix. Define $\varphi\colon [0,1] \to [0,1]$ as 
\begin{equation*}
\varphi(r)=\max\{0\leq s\leq 1: |g(x)-g(y)|\leq r^3 \text{ whenever } g\in \iK \text{ and } |x-y|\leq s\}.
\end{equation*}
Clearly, $\varphi$ is increasing, and by the compactness of $\iK$ we obtain that $\varphi(r)=0$ iff $r=0$. We will define a homeomorphism $f\colon [0,1]\to [0,1]$ and families of non-overlapping closed intervals $\{\iI_n\}_{n\geq 1}$ such that for each $n$ and $I\in \iI_n$ we have  
\begin{enumerate}
\item \label{e1}  $2^{-(n+1)}\leq \diam(I)<2^{-n}$,
\item \label{e2} $\diam f(I)\leq \varphi(\diam I)$, and
\item \label{e3} $\lambda(\liminf_{n} \bigcup \iI_n)=1$. 
\end{enumerate}
First we prove that this suffices. Let $G=\liminf_{n} \bigcup \iI_n$ and for each $x\in G$ and large enough $n$ choose $I_n(x)\in \iI_n$ such that $x\in I_n(x)$. Let 
\begin{equation*}
P=\{x\in G: B(x,4^{-n})\subseteq I_n(x) \text{ for all large enough } n\}.
\end{equation*}
First we show that $\lambda(P)=1$. Indeed, for every $n\geq 1$ define
\begin{equation*}
S_n=\bigcup \{U(\partial I_n, 4^{-n}): I_n\in \iI_n\}
\end{equation*}
and let $S=\limsup_n S_n$. Then $P=G\setminus S$. 
Since $\iI_n$ consists of at most $2^{n+1}$ intervals of $[0,1]$, we obtain that
\begin{equation*} \lambda(S_n)\leq 2^{n+2} 4^{-n}=2^{-n+2}.
\end{equation*}
Therefore $\lambda(S)=0$ by the Borel--Cantelli Lemma, hence $\lambda(P)=\lambda(G)=1$. Now we define $F\in \HomeoId$ by \begin{equation*}
F(x_1,\dots,x_d)=(f(x_1),\dots,f(x_d)).
\end{equation*}

We show that for all $g\in \iK$ we have $(g\circ F)'(x)=\mathbf{0}_{d\times d}$ at each $x\in P^d$. Fix $x\in P^d$ and $g\in \iK$ and, then $I_n(x_i)$ are defined such that $B(x_i,4^{-n})\subseteq I_n(x_i)$ for all large enough $n$ and $1\leq i\leq d$. Fix such an $n$ and let $B_n=B(x,4^{-n})=\prod_{i=1}^d B(x_i, 4^{-n})$. Then the definition of $F$, \eqref{e2}, and \eqref{e1} imply that $\diam F(B_n)\leq \varphi(2^{-n})$. Thus the definition of $\varphi$ yields that
\begin{equation*}
\diam (g\circ F)(B_n)\leq 2^{-3n},    
\end{equation*}
so indeed $(g\circ F)'(x)=\mathbf{0}_{d\times d}$.

Finally, we construct $\iI_n$ and $f$ satisfying \eqref{e1}, \eqref{e2}, and \eqref{e3}. Let $K\subseteq [0,1]$ be the Smith--Volterra--Cantor set, see Definition~\ref{d:SMC}. Let $f_0=\id$ and $K_0=\{0,1\}$. Assume by induction that an increasing homeomorphism $f_{n-1}\colon [0,1]\to [0,1]$ and a compact set $K_{n-1}\subseteq [0,1]$ have already been constructed for some $n\geq 1$ such that the length of each complementary interval of $K_{n-1}$ is an integer power of $2$. We will construct an increasing homeomorphism $f_n\colon [0,1]\to [0,1]$ and a compact set $K_n\subseteq [0,1]$ such that $K_{n-1}\subseteq K_n$ and $f_n|_{K_{n-1}}=f_{n-1}|_{K_{n-1}}$. We will obtain $K_n$ by filling up the complementary intervals of $K_{n-1}$ with similar copies of $K$. More precisely, enumerate the complementary intervals $\{(u_i,v_{i})\}_{i\geq 1}$ of $K_{n-1}$ and let us decompose each $[u_i,v_{i}]$ into finitely many non-overlapping closed intervals $\{J_{i,j}=[w_{i,j-1},w_{i,j}]\}_{1\leq j\leq k_i}$ such that $w_{i,0}=u_i$, $w_{i,k_i}=v_{i}$, and for all $1\leq j\leq k_i$ we have $w_{i,j}-w_{i,j-1}=2^{-N}$ for some large enough $N=N(i)\in \N^+$ for which the oscillation of $f_{n-1}$ satisfies 

\begin{equation} \label{e:osc}
f_{n-1}(w_{i,j})-f_{n-1}(w_{i,j-1})\leq 2^{-n}.  
\end{equation}
Let $K_{i,j}$ be the similar copy of $K$ with endpoints $\{w_{i,j-1},w_{i,j}\}$, more precisely, let $K_{i,j}=\psi_{i,j}(K)$ where $\psi_{i,j}(z)=(w_{i,j}-w_{i,j-1})z+w_{i,j-1}$. Define 
\begin{equation*}
K_n=K_{n-1}\cup \left( \bigcup_{i=1}^{\infty} \bigcup_{j=1}^{k_i} K_{i,j} \right).
\end{equation*}
Let us modify $f_{n-1}$ on each interval $J_{i,j}$ simultaneously as follows. Fix $i,j$, we will define $f_n|_{J_{i,j}}$ as a limit of uniformly convergent, strictly increasing functions $h_k=h_k(i,j)$. First let $h_0=f_{n-1}|_{J_{i,j}}$. If $h_{k-1}$ is given, we modify it on each $(k-1)$st level elementary interval $[a,b]$ of $K_{i,j}$ in a strictly monotone way such that $h_k(a)=h_{k-1}(a)$, $h_k(b)=h_{k-1}(b)$, and the $k$th elementary subintervals $I_1,I_2$ in $[a,b]$ satisfy 
\begin{equation} \label{e:hkJ} 
\diam h_k(I_m)\leq \varphi(\diam I_m)
\end{equation} for $m=1,2$. We can define $h_k$ for all $k\in \N$. Since the length of a $(k-1)$st level elementary interval is at most $2^{-(k-1)}$, for all $k\geq 1$ we obtain 
\begin{equation} \label{e:h}
\|h_{k}-h_{k-1}\|\leq \varphi (2^{-(k-1)})\leq 2^{-3(k-1)},
\end{equation} 
where the last inequality comes from $\id \in \iK$. Inequality \eqref{e:h} yields that $h_k$ uniformly converges to some continuous $h=h_{i,j}$. We show that $h$ is strictly increasing. Let $x,y\in J_{i,j}$ such that $x<y$, we need to prove that $h(x)<h(y)$. Choose a complementary interval $U$ of $K_{i,j}$ such that $U\subseteq (x,y)$. By our construction $h|_U=(h_k)|_U$ for a large enough $k$ and $h_k$ is strictly increasing, so $h(x)<h(y)$. By the construction for every $k\geq 1$ and $k$th level elementary interval $I=[c,d]$ of $K_{i,j}$ we have $h_{\ell}(c)=h(c)$ and $h_{\ell}(d)=h(d)$ for all $\ell\geq k$, so \eqref{e:hkJ} implies that 
\begin{equation} \label{e:hI}
 \diam h_{i,j}(I)\leq \varphi(\diam I).
\end{equation}
Define the homeomorphism $f_n \colon [0,1]\to [0,1]$ such that 
\begin{equation*}
f_n(z)=\begin{cases} 
f_{n-1}(z)  & \textrm{if } z\in K_{n-1},   \\
h_{i,j}(z) & \textrm{if } z\in I_{i,j} \textrm{ for some } i\geq 1 \text{ and } 1\leq j\leq k_i.
\end{cases}
\end{equation*}
Note that $f_n$ is indeed strictly increasing, as $f_{n-1}$ and $h_{i,j}$ are strictly increasing as well. We have defined $f_n$ for all $n\in \N$. The construction and \eqref{e:osc} imply
\begin{equation*}
\|f_{n}-f_{n-1}\|\leq 2^{-n}
\end{equation*}
for all $n\geq 1$, so $f_n$ converges to a homeomorphism $f\colon [0,1]\to [0,1]$. Indeed, it is easy to show that $f$ is strictly increasing: Let $0\leq x<y\leq 1$ be arbitrary, we need $f(x)<f(y)$. As $\bigcup_{n=1}^{\infty}K_n$ is dense in $[0,1]$, we can choose $n\in \N$ and $x_n,y_n\in K_n$ such that $x<x_n<y_n<y$. As $f|_{K_n}=(f_n)|_{K_n}$ by our construction and $f_n$ is strictly increasing, we obtain 
\begin{equation*} f(x)\leq f(x_n)=f_n(x_n)<f_n(y_n)=f(y_n)\leq f(y),
\end{equation*}
proving that $f$ is a homeomorphism. Let $\iI$ be the family of elementary intervals $I$ of all the copies of $K$ of the form $K_{i,j}$ that we used during the construction. We show that for all $I\in \iI$ we have 
\begin{equation} \label{e:iI} 
\diam f(I)\leq \varphi(\diam I).
\end{equation} 
Indeed, each $I\in \iI$ is an elementary interval of $K_{i,j}\subseteq K_{n}$ for some $i,j,n$. Then $\diam f_{m}(I)=\diam f_{n}(I)=\diam h_{i,j}(I)$ for all $m \geq n$ by the construction of $f_m$. Thus $\diam f(I)=\diam h_{i,j}(I)$, so \eqref{e:hI} implies \eqref{e:iI}. For each $n\geq 1$ let 
\begin{equation*} 
\iI_n=\{I\in \iI: 2^{-(n+1)}\leq \diam I< 2^{-n}\}.
\end{equation*} 
We need to show that $f$ and $\iI_n$ satisfy \eqref{e1}, \eqref{e2}, and \eqref{e3}. Property~\eqref{e1} follows from the definition of $\iI_n$, and \eqref{e:iI} yields \eqref{e2}. Hence it is enough to show \eqref{e3}. Let $G=\bigcup_{n=1}^{\infty} K_n$. The construction of $K_n$ implies that 
\begin{equation*}
\lambda(G)=\lim_{n\to \infty} \lambda(K_n)=\lim_{n\to \infty} 1-(1-\lambda(K))^n=
\lim_{n\to \infty} (1-2^{-n})=1.
\end{equation*}
We need to show that $G\subseteq \liminf_{n} \bigcup \iI_n$ (actually equality holds). Indeed, if $z\in G$, then $z$ is in a similar copy $C$ of $K$ with similarity ratio $2^{-m}$ for some $m \in \N$. For each $k\geq 1$ the $k$th elementary interval $I_k$ of $C$ containing $z$ satisfies $|I_k|=2^{-m} b_k$, so $2^{-(m+k)-1}\leq |I_k|<2^{-(m+k)}$ by \eqref{e:bn}. Thus $I_k\in \iI_{k+m}$, and $z\in \bigcup \iI_{k+m}$ for all $k\geq 1$. Thus $z\in \liminf_{n} \bigcup \iI_n$, and the proof is complete.
\end{proof}    
    
      Before proving the main result of this section we need some preparation. 
    \begin{lemma}\label{l:R}
    For every homeomorphism $g\in \HomeoId$ there exists a Borel set $R_g\subseteq[0,1]^d$ such that $\lambda^d(R_g)=1$ and if $N\subseteq R_g$ and $\lambda^d(N)=0$, then $\lambda^d(g(N))=0$ as well.
    \end{lemma}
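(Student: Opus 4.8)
The plan is to apply the Lebesgue decomposition theorem to the set function $\nu(A) \defeq \lambda^d(g(A))$ on Borel subsets $A\subseteq [0,1]^d$. Since $g$ is a homeomorphism of $[0,1]^d$, it carries Borel sets to Borel sets and, being a bijection, preserves disjointness and countable unions; hence $\nu$ is a genuine Borel measure, and in fact a probability measure, as $\nu([0,1]^d)=\lambda^d(g([0,1]^d))=1$. First I would write the Lebesgue decomposition $\nu = \nu_{\mathrm{ac}} + \nu_s$ of $\nu$ with respect to $\lambda^d$, where $\nu_{\mathrm{ac}} \ll \lambda^d$ and $\nu_s \perp \lambda^d$. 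The singularity of $\nu_s$ furnishes a Borel set $Z\subseteq [0,1]^d$ with $\lambda^d(Z)=0$ and $\nu_s([0,1]^d\setminus Z)=0$. Set $R_g = [0,1]^d\setminus Z$; this is Borel and $\lambda^d(R_g)=1$.

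Now suppose $N\subseteq R_g$ is Borel with $\lambda^d(N)=0$. Then
\begin{equation*}
\lambda^d(g(N)) = \nu(N) = \nu_{\mathrm{ac}}(N) + \nu_s(N) = 0,
\end{equation*}
since $\nu_{\mathrm{ac}}(N)=0$ because $\nu_{\mathrm{ac}}\ll\lambda^d$ and $\lambda^d(N)=0$, while $\nu_s(N)=0$ because $N\subseteq R_g = [0,1]^d\setminus Z$ and $\nu_s$ is concentrated on $Z$. For an arbitrary (not necessarily Borel) $\lambda^d$-nullset $N\subseteq R_g$, I would pick a Borel set $N'\supseteq N$ with $\lambda^d(N')=0$ and pass to $N'' = N'\cap R_g$, which is Borel, of measure zero, contained in $R_g$, and still contains $N$; then $g(N)\subseteq g(N'')$ and the case just treated gives $\lambda^d(g(N''))=0$, whence $\lambda^d(g(N))=0$ as well (reading $\lambda^d$ as outer measure, as elsewhere in the paper).

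I do not expect a real obstacle here: the only points needing a word of justification are that $\nu$ is indeed a Borel measure — immediate from $g$ being a homeomorphism — and the reduction to Borel nullsets via Borel hulls. The conceptual content is merely the observation that the locus where $g$ can enlarge a nullset into a set of positive measure is itself pinned down to a single fixed $\lambda^d$-nullset $Z$, which is exactly what the Lebesgue decomposition of $\nu$ records.
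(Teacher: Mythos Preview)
Your proof is correct. The paper takes a slightly different, more bare-hands route: instead of invoking the Lebesgue decomposition theorem, it fixes a maximal disjoint family $\mathcal{A}$ of Borel sets of positive measure whose $g$-preimages are null, and sets $R_g=[0,1]^d\setminus g^{-1}\bigl(\bigcup\mathcal{A}\bigr)$. Such a family is necessarily countable, so $R_g$ has full measure, and if some nullset $N\subseteq R_g$ had $\lambda^d(g(N))>0$ then (a Borel hull of) $g(N)$ would enlarge $\mathcal{A}$, contradicting maximality. Your argument packages exactly the same phenomenon into the decomposition $\nu=\nu_{\mathrm{ac}}+\nu_s$ of the pushforward $\nu=\lambda^d\circ g$: the set $g^{-1}\bigl(\bigcup\mathcal{A}\bigr)$ in the paper is a carrier for your $\nu_s$, so the two constructions of $R_g$ agree up to a $\lambda^d$-nullset. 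Your version is cleaner and cites a standard theorem; the paper's is entirely self-contained. Either way the content is the same observation you articulate at the end: the locus where $g$ can inflate a nullset is itself confined to a fixed nullset.
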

    
    \begin{proof}
    Let $\mathcal{A}$ be a maximal disjoint family of Borel sets of positive measure with measure zero preimage. Then $R_g =[0,1]^d\setminus g^{-1}(\bigcup \mathcal{A})$ clearly works.
    \end{proof}
    
      We also need \cite[Theorem 17.25]{Ke}, which states the following.
    \begin{theorem} \label{t:Ke}
    Let $(X, \mathcal{S})$ be a measurable space, $Y$ a separable metrizable space, and $A \subseteq X\times Y$ a measurable set. Then the map
    \begin{equation*}
        \Phi : X\times \mathbf{P}(Y)\to [0, \infty),\qquad \Phi(x, \nu) = \nu(A_x)
    \end{equation*}
    is measurable for $\mathcal{S}\times \mathcal{B}(\mathbf{P}(Y))$, where $A_x=\{y\in Y: (x,y)\in A\}$ is the $x$-section of $A$, $\mathbf{P}(Y)$ is the set of probability measures on $Y$ endowed with the weak topology, and $\mathcal{B}(\cdot)$ stands for the Borel $\sigma$-algebra.
    \end{theorem}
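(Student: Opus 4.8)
This is the standard parametrized measurability theorem for evaluating Borel probability measures, so in the paper one would just invoke \cite[Theorem~17.25]{Ke}; nevertheless the argument is short, and the plan is to prove it by a Dynkin $\pi$--$\lambda$ (monotone class) argument applied to the set $A$, after first disposing of the ``one variable'' case.

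The first thing I would establish is that for every $B \in \mathcal{B}(Y)$ the evaluation map $\nu \mapsto \nu(B)$ is Borel on $\mathbf{P}(Y)$. For an open set $U \subseteq Y$ this is clear: by the Portmanteau theorem $\nu \mapsto \nu(U)$ is lower semicontinuous for the weak topology, hence Borel. I would then let $\mathcal{L}$ be the family of Borel sets $B \subseteq Y$ for which $\nu \mapsto \nu(B)$ is Borel, and check that $\mathcal{L}$ is a $\lambda$-system: it contains $Y$ (constant value $1$); it is closed under proper differences, since $\nu(B_2 \setminus B_1) = \nu(B_2) - \nu(B_1)$ whenever $B_1 \subseteq B_2$ and $\nu$ is a probability measure; and it is closed under increasing countable unions, since $\nu(\bigcup_n B_n) = \sup_n \nu(B_n)$. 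As $\mathcal{L}$ contains the $\pi$-system of open subsets of $Y$, which generates $\mathcal{B}(Y)$ because $Y$ is metrizable, Dynkin's theorem gives $\mathcal{L} = \mathcal{B}(Y)$.

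Next I would run the same scheme in the product. Let $\mathcal{D}$ consist of the sets $A \in \mathcal{S} \otimes \mathcal{B}(Y)$ for which the map $\Phi_A \colon (x,\nu) \mapsto \nu(A_x)$ is $\mathcal{S} \otimes \mathcal{B}(\mathbf{P}(Y))$-measurable (recall that $A_x \in \mathcal{B}(Y)$ for every $x$ since $A$ is product measurable, so $\Phi_A$ makes sense). For a measurable rectangle $A = S \times B$ one has $A_x = B$ for $x \in S$ and $A_x = \emptyset$ otherwise, hence $\Phi_A(x,\nu) = \mathbbm{1}_S(x)\,\nu(B)$, a product of a function $\mathcal{S}$-measurable in $x$ and, by the previous paragraph, a function Borel in $\nu$; thus rectangles lie in $\mathcal{D}$, and since sections add over disjoint unions so does the generating algebra $\mathcal{A}_0$ of finite disjoint unions of rectangles. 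Moreover $\mathcal{D}$ is a $\lambda$-system: $X \times Y \in \mathcal{D}$ ($\Phi \equiv 1$); if $A \subseteq A'$ lie in $\mathcal{D}$ then $(A' \setminus A)_x = A'_x \setminus A_x$ with $A_x \subseteq A'_x$, so $\Phi_{A' \setminus A} = \Phi_{A'} - \Phi_A$; and if $A_n \uparrow A$ with $A_n \in \mathcal{D}$ then $\Phi_A = \sup_n \Phi_{A_n}$ by continuity from below. Since $\mathcal{A}_0$ is a $\pi$-system generating $\mathcal{S} \otimes \mathcal{B}(Y)$, Dynkin's theorem yields $\mathcal{D} = \mathcal{S} \otimes \mathcal{B}(Y)$, which is what we want.

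I do not expect a genuine obstacle. The only point needing care is the first step and, behind it, the fact (already quoted in the preliminaries from \cite[Theorem~17.19]{Ke}) that $\mathbf{P}(Y)$ with the weak topology is itself separable metrizable, so that ``Borel'' is the appropriate notion and is preserved under the countable suprema used above. With that in place, both halves are routine $\pi$--$\lambda$ bookkeeping.
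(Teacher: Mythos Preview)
Your proposal is correct, and you anticipated the situation accurately: the paper does not prove this statement at all but simply quotes it verbatim as \cite[Theorem~17.25]{Ke}. The argument you give is the standard one (and essentially the one in Kechris): first show $\nu\mapsto\nu(B)$ is Borel on $\mathbf{P}(Y)$ via Portmanteau plus a $\pi$--$\lambda$ argument, then run the same $\pi$--$\lambda$ scheme over rectangles in $\mathcal{S}\otimes\mathcal{B}(Y)$. One minor remark: in the second step the measurable rectangles already form a $\pi$-system, so passing to the algebra $\mathcal{A}_0$ of finite disjoint unions is harmless but unnecessary.
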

    
Now we are ready to prove the following.
    
\begin{theorem}
\label{t:prevalent}
Let $d\geq 1$. The prevalent $f\in \HomeoId$ is singular. 
\end{theorem}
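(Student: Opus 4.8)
The plan is to recast singularity in measure-theoretic terms and to exhibit a single Borel probability measure on $\HomeoId$ transversal to the complement of the set of singular homeomorphisms. By Theorem~\ref{t:versions of singularity}, $f\in\HomeoId$ is singular if and only if $f_*\lambda^d\perp\lambda^d$, where $f_*\lambda^d=\lambda^d\circ f^{-1}$. Moreover $\iF_{\mathrm{sing}}\defeq\{f\in\HomeoId:f_*\lambda^d\perp\lambda^d\}$ is Borel: the map $f\mapsto f_*\lambda^d$ is continuous from $\HomeoId$ into $\mathbf{P}([0,1]^d)$, and $\{\nu\in\mathbf{P}([0,1]^d):\nu\perp\lambda^d\}=\bigcap_k\bigcup_U\{\nu:\nu(U)>1-2^{-k}\}$, the union over the countably many finite unions $U$ of rational balls with $\lambda^d(U)<2^{-k}$, is Borel because $\nu\mapsto\nu(U)$ is lower semicontinuous for open $U$. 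Hence it is enough to construct a Borel probability measure $\mu$ on $\HomeoId$ with the property that for all $g,h\in\HomeoId$ we have $gFh\in\iF_{\mathrm{sing}}$ for $\mu$-a.e.\ $F$; then $B\defeq\HomeoId\setminus\iF_{\mathrm{sing}}$ is a Borel superset of the non-singular homeomorphisms with $\mu(gBh)=0$ for all $g,h$, so this set is Haar null.

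I would let $\mu$ be the distribution of $F=\sigma_p\circ f_0\circ\sigma_{p'}$, where $f_0\in\HomeoId$ is a fixed singular homeomorphism (one exists by Theorem~\ref{t:zero} or Theorem~\ref{t:gensing1}), the pair $(p,p')$ is uniform in $(0,1)^d\times(0,1)^d$, and for $q\in(0,1)^d$ the ``spreading'' homeomorphism $\sigma_q$ acts coordinatewise via $\sigma_q(x)_i=\gamma_{q_i}(x_i)$, with $\gamma_t\colon[0,1]\to[0,1]$ the piecewise linear homeomorphism that maps $[0,\tfrac12]$ affinely onto $[0,t]$ and $[\tfrac12,1]$ affinely onto $[t,1]$. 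Since $(p,p')\mapsto F$ is continuous on $(0,1)^d\times(0,1)^d$, $\mu$ is a genuine Borel probability measure. The construction is designed so that $\mu$ has three properties: (i) $F\in\iF_{\mathrm{sing}}$ for $\mu$-a.e.\ $F$; (ii) for every $x\in\interior([0,1]^d)$ the law of $F\mapsto F(x)$ is absolutely continuous with respect to $\lambda^d$; (iii) for every $y\in\interior([0,1]^d)$ the law of $F\mapsto F^{-1}(y)$ is absolutely continuous with respect to $\lambda^d$. Property (i) holds because each $\sigma_q$ is bi-Lipschitz, so $(\sigma_{p'})_*\lambda^d\ll\lambda^d$, hence $(f_0)_*(\sigma_{p'})_*\lambda^d$ is carried by a fixed $\lambda^d$-null Borel carrier $Z_0$ of $(f_0)_*\lambda^d$, and therefore $F_*\lambda^d$ is carried by $\sigma_p(Z_0)$, which is again $\lambda^d$-null. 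For (ii), writing $W_x$ for the law of $f_0(\sigma_{p'}(x))$ — a measure carried by $\interior([0,1]^d)$ since $\sigma_{p'}$ and $f_0$ preserve the interior — independence of $p,p'$ makes the law of $F(x)$ equal to $\int_{(0,1)^d}(\sigma_p)_*W_x\,dp$, and for $\lambda^d$-null $E$ one gets $\int_{(0,1)^d}\big((\sigma_p)_*W_x\big)(E)\,dp=\int\big(\int_{(0,1)^d}\mathbf{1}_E(\sigma_p(w))\,dp\big)\,dW_x(w)=0$, because an elementary computation with the $\gamma_t$ shows that for every interior $w$ the law of $p\mapsto\sigma_p(w)$ is absolutely continuous (each coordinate $\gamma_{p_i}(w_i)$ is uniform on a nondegenerate subinterval of $[0,1]$, and the coordinates are independent); (iii) is symmetric, using $F^{-1}(y)=\sigma_{p'}^{-1}(f_0^{-1}(\sigma_p^{-1}(y)))$.

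It remains to deduce from (i)--(iii) that for fixed $g,h\in\HomeoId$ we have $gFh\in\iF_{\mathrm{sing}}$ for $\mu$-a.e.\ $F$. Set $\nu\defeq h_*\lambda^d$, a Borel probability measure with $\nu(\partial[0,1]^d)=\lambda^d(\partial[0,1]^d)=0$ since homeomorphisms of $[0,1]^d$ preserve the boundary, and decompose $\nu=\nu^{\mathrm{ac}}+\nu^{\mathrm s}$ with respect to $\lambda^d$, $\nu^{\mathrm s}$ carried by a fixed $\lambda^d$-null Borel set $V$. By (i), $F_*\nu^{\mathrm{ac}}\ll F_*\lambda^d\perp\lambda^d$ for $\mu$-a.e.\ $F$; and, using (iii) and Fubini,
\[
\int_{\HomeoId}\lambda^d\big(F(V)\big)\,d\mu(F)=\int_{[0,1]^d}\mu\big(\{F:F^{-1}(y)\in V\}\big)\,dy=0,
\]
so the carrier $F(V)$ of $F_*\nu^{\mathrm s}$ is $\lambda^d$-null for $\mu$-a.e.\ $F$; thus $F_*\nu\perp\lambda^d$ for $\mu$-a.e.\ $F$, carried by some $\lambda^d$-null Borel set $Z_F$. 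Now let $R_g$ be as in Lemma~\ref{l:R}, so that $[0,1]^d\setminus R_g$ is a fixed $\lambda^d$-null set; since $\nu$ is carried by $\interior([0,1]^d)$, (ii) and Fubini give
\[
\int_{\HomeoId}(F_*\nu)\big([0,1]^d\setminus R_g\big)\,d\mu(F)=\int_{[0,1]^d}\mu\big(\{F:F(x)\notin R_g\}\big)\,d\nu(x)=0,
\]
so $(F_*\nu)([0,1]^d\setminus R_g)=0$ for $\mu$-a.e.\ $F$. Hence for $\mu$-a.e.\ $F$ the measure $F_*\nu$ is carried by the $\lambda^d$-null subset $Z_F\cap R_g$ of $R_g$, so by Lemma~\ref{l:R} the set $g(Z_F\cap R_g)$ is $\lambda^d$-null and carries $(gFh)_*\lambda^d=g_*(F_*\nu)$; therefore $gFh\in\iF_{\mathrm{sing}}$.

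The main obstacle is exactly this final deduction: since $\HomeoId$ is non-abelian one must simultaneously neutralize the possible Lebesgue-singularity of $h_*\lambda^d$ acting on the right and the fixed $\lambda^d$-null ``bad set'' $[0,1]^d\setminus R_g$ of $g$ acting on the left, and sandwiching a fixed singular homeomorphism between two independent random bi-Lipschitz spreaders is engineered precisely so that the random endpoints $F(x)$ and $F^{-1}(y)$ acquire absolutely continuous laws, which is what the two Fubini computations need. The remaining ingredients — the elementary computation that $p\mapsto\sigma_p(w)$ has an absolutely continuous law for interior $w$, continuity of $(p,p')\mapsto F$, Borelness of $\iF_{\mathrm{sing}}$, and the joint measurability (of $(F,x)\mapsto F(x)$ and $(F,y)\mapsto F^{-1}(y)$) required for the Fubini steps, for which Theorem~\ref{t:Ke} is available — are routine.
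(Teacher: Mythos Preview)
Your proof is correct and follows essentially the same strategy as the paper: both construct the witness measure as the law of $\Sigma_1\circ f_0\circ\Sigma_2$ for a fixed singular $f_0$ sandwiched between two independent random coordinatewise bi-Lipschitz ``spreaders'' (you use piecewise-linear tent maps $\sigma_q$, the paper uses power maps $\psi_s(x)_i=x_i^{s_i}$), and both exploit via Fubini that evaluating a spreader at a fixed interior point has an absolutely continuous law, together with Lemma~\ref{l:R} to neutralize the left translate $g$. The only organizational differences are that you handle the right translate $h$ through the Lebesgue decomposition of $h_*\lambda^d$ (invoking your property~(iii) for the singular part) whereas the paper works directly with the full-measure set $F$ for $f_0$, and your Borelness argument goes through the continuous map $f\mapsto f_*\lambda^d$ rather than the paper's $\limsup$ characterization and Theorem~\ref{t:Ke}.
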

    
\begin{proof} Fix $d\geq 1$ and define
\begin{equation*}
\iF= \{f\in\HomeoId : f\text{ is singular}\},   
\end{equation*} 
we need to prove that $\iF$ is prevalent.
   
First we prove that $\iF$ is a Borel set. For each $n\in \mathbb{N}^+$ define 
\begin{equation*} 
g_n \colon \HomeoId\times[0,1]^d \to [0,\infty), \quad 
g_n(f, x) =\frac{\lambda^d\left(f\left(B\left(x,1/n\right)\right)\right)}{\lambda^d\left(B\left(x,1/n\right)\right)}. 
\end{equation*} 
By Theorem~\ref{t:versions of singularity} we obtain that
\begin{equation*}
\iF= \left\{ f\in\HomeoId : \limsup_{n\to \infty} g_n(f, x) = 0\text{ for a.\,e.}~x\in[0,1]^d \right\}.
\end{equation*}
    
Notice that the set
\begin{equation*}
\Gamma = \left\{(f, x) \in \HomeoId\times [0,1]^d: \limsup_{n\to \infty} g_n(f,x) = 0\right\}
\end{equation*}
is Borel. Indeed, it can be written as
\begin{equation*} \Gamma = \bigcap_{k=1}^\infty \bigcup_{m=1}^\infty \bigcap_{n=m}^\infty \left\{(f, x) \in \HomeoId\times [0,1]^d: g_n(f,x) <1/k\right\},
\end{equation*}
where the sets $\{(f,x): g_n(f,x)<1/k\}$ are open. Applying Theorem~\ref{t:Ke} for $X = \HomeoId$, $\mathcal{S}=\mathcal{B}(X)$, $Y = [0,1]^d$, and $A = \Gamma$ yields that
\begin{equation*} \Phi \colon \HomeoId\times \mathbf{P}([0,1]^d)\to [0, \infty),\quad \Phi(f, \nu) = \nu(\Gamma_f)
 \end{equation*}
is a Borel measurable map. The definition of $\Gamma$ implies that 
\begin{align*}
\iF&=\{f\in \HomeoId : \lambda^d(\Gamma_f)=1\}\\
&=\{f\in \HomeoId: (f,\lambda^d)\in \Phi^{-1}(\{1\})\}
\end{align*}
is the section of the Borel set $\Phi^{-1}(\{1\})$ at $\lambda^d\in \mathbf{P}([0,1]^d)$. Hence $\iF$ is a Borel set.
  
To conclude the proof, we will construct a measure $\mu$ which witnesses that $\iF$ is prevalent. Fix a singular homeomorphism $f_0\in \HomeoId$ and a subset $F\subseteq [0,1]^d$ such that $\lambda^d(F)=1$ and $\lambda^d(f_0(F))=0$. For $s= (s_1, s_2, \dots, s_d)$, where $s_i\in [1,2]$ for each $1\le i \le d$, let us define $\psi_{s} \in \HomeoId$ as
    \begin{equation*}
   \psi_{s}(x_1, x_2, \ldots, x_d) = (x_1^{s_1}, x_2^{s_2}, \ldots, x_d^{s_d}).
    \end{equation*}
    
    For any Borel set $B\subseteq \HomeoId$ define
     \begin{equation*} 
     \mu(B) = \lambda^{2d}\left(\left\{(s,t) \in [1,2]^d\times [1,2]^d: \psi_{s} \circ f_0 \circ \psi_{t} \in B\right\}\right).
    \end{equation*}
    It is easy to see that this defines a Borel probability measure on $\HomeoId$. We prove that $\mu$ is indeed a witness measure for $\iF$. Assume that $g, h\in \HomeoId$ are arbitrarily fixed, it is enough to show that $\mu(g^{-1}\circ \iF \circ h^{-1})=1$. 
    
    According to the definition of $\mu$,
    \begin{equation*} \mu(g^{-1}\circ \iF \circ h^{-1}) = \lambda^{2d}\left(\left\{(s,t) \in [1,2]^d\times [1,2]^d : \psi_{s} \circ f_0 \circ \psi_{t} \in g^{-1}\circ \iF \circ h^{-1}\right\}\right).
    \end{equation*}
    This means that we want to prove that for almost every pair $(s,t)$ the composition $g\circ \psi_{s} \circ f_0 \circ \psi_{t}\circ h$ is an element of $\iF$. Using the definition of $\iF$, this is equivalent to the condition that for almost every pair $(s,t)$ there exists a set $C_{s,t}\subseteq [0,1]^d$ such that $\lambda^d(C_{s,t})=1$ and 
    \begin{equation} \label{e:C}
\lambda^d((g\circ \psi_{s} \circ f_0 \circ \psi_{t}\circ h)(C_{s,t}))=0.        
\end{equation}
    
Suppose that $x\in (0,1)^d$. Since $h$ is a homeomorphism, $h(x)\in (0,1)^d$ as well. By definition $t\mapsto \psi_{t}(h(x))$ is a one-to-one, bi-Lipschitz map from $[1,2]^d$ into $(0,1)^d$. Thus $\lambda^d(F)=1$ yields
\begin{equation} \label{eq:psi1} \lambda^d\left(\left\{t\in [1,2]^d: \psi_{t}(h(x))\in F\right\}\right)=1.
\end{equation}
    
Now assume $x\in (0,1)^d$ and $t\in [1,2]^d$.    
By Lemma~\ref{l:R} there exists a Borel set $R_g\subseteq [0,1]^d$ such that $\lambda^d(R_g)=1$ and $\lambda^d(g(N))=0$ for every nullset $N\subseteq R_g$. As $f_0\circ \psi_{t}\circ h$ is still a homeomorphism, we have $(f_0\circ \psi_{t}\circ h)(x)\in (0,1)^d$. We may repeat the previous argument to show that 
\begin{equation} \label{eq:psi2} \lambda^d\left(\left\{s\in [1,2]^d: \psi_{s}((f_0\circ \psi_{t}\circ h)(x))\in R_g\right\}\right)=1.
\end{equation}

Applying \eqref{eq:psi1}, \eqref{eq:psi2}, and Fubini's theorem we obtain 
that almost every triple \begin{equation*} (x, s, t)\in [0,1]^d\times [1,2]^d \times [1,2]^d
\end{equation*}
satisfies that $\psi_{t}(h(x))\in F$ and $\psi_{s}((f_0\circ\psi_{t}\circ h)(x))\in R_g$.
    
Applying Fubini's theorem in the other direction yields that for almost every pair $(s,t)$ there exists a set $C_{s,t}\subseteq [0,1]^d$ with $\lambda^d(C_{s,t})=1$ satisfying
\begin{equation} \label{e:F} 
\psi_{t}(h(C_{s,t}))\subseteq F
\end{equation} 
and 
\begin{equation} \label{e:R}
(\psi_{s}\circ f_0\circ\psi_{t}\circ h)(C_{s,t})\subseteq R_g.
\end{equation}

We only need to show that $C_{s,t}$ satisfies \eqref{e:C}. By \eqref{e:F} we obtain
\begin{equation*} \lambda^d((f_0\circ\psi_{t}\circ h)(C_{s,t}))\le \lambda^d(f_0(F)) = 0.
\end{equation*}
Since $\psi_{s}$ is Lipschitz, it maps measure zero sets to measure zero sets, so
\begin{equation*}
\lambda^d((\psi_{s}\circ f_0\circ\psi_{t}\circ h)(C_{s,t}))= 0.
\end{equation*}
Then \eqref{e:R} and the definition of $R_g$ imply \eqref{e:C}, and the proof is complete. 
\end{proof}

\section{Solution to the problem of Mycielski and \texorpdfstring{$\iH^d$}{Hd}-measure of graphs in higher dimensions} \label{s:Hd}

In this section first we answer Mycielski's problem, then formulate the generalizations of this question and Banach's result to higher dimensions.

First, Theorems~\ref{t:prevalent} and \ref{t:length} immediately yield the following, answering Question~\ref{q:Mycielski} of Mycielski.

\begin{corollary}
\label{c:Mycielski}
The graph of the prevalent $f\in \Homeo([0, 1])$ is of length $2$.
\end{corollary}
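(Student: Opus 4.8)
The plan is essentially to observe that Corollary~\ref{c:Mycielski} is an immediate consequence of two results already established in the paper, so the ``proof'' is just a short deduction chaining them together. The key inputs are Theorem~\ref{t:prevalent}, applied in the case $d=1$, which says that the prevalent $f\in\Homeo([0,1])$ is singular (i.e.\ satisfies property \eqref{f:full to null}), and Theorem~\ref{t:length}, which says that for $f\in\Homeo([0,1])$ singularity is equivalent to the graph of $f$ having length $2$ (the equivalence of \eqref{f:full to null} and \eqref{e:length}).

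Concretely, I would argue as follows. By Theorem~\ref{t:prevalent} with $d=1$, the set $\{f\in\Homeo([0,1]): f\text{ is singular}\}$ is prevalent, hence its complement is Haar null. By Theorem~\ref{t:length}, for every $f\in\Homeo([0,1])$ we have that $f$ is singular if and only if $\length(\graph(f))=2$. Therefore the set $\{f\in\Homeo([0,1]): \length(\graph(f))=2\}$ equals the prevalent set of singular homeomorphisms, and in particular it is prevalent. This is exactly the statement of Corollary~\ref{c:Mycielski}.

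There is essentially no obstacle here; the only minor point to be careful about is bookkeeping of the equivalences. Theorem~\ref{t:length} asserts that all six properties \eqref{f:derivative}--\eqref{e:length} are equivalent for $f\in\Homeo([0,1])$, so one must simply point to the equivalence \eqref{f:full to null}$\Leftrightarrow$\eqref{e:length} among them, while recalling from Definition~\ref{d:sing} (or the definition of ``singular'' in the introduction) that property \eqref{f:full to null} is precisely the property ``$f$ is singular'' that Theorem~\ref{t:prevalent} delivers. Since the class of prevalent sets is closed under supersets (indeed equality suffices), transporting prevalence across the equality of these two sets is trivial. Thus the proof is a two-line citation argument, and I would present it as such rather than reproving anything.

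\begin{proof}
By Theorem~\ref{t:prevalent} applied with $d=1$, the set of singular $f\in\Homeo([0,1])$ is prevalent. By Theorem~\ref{t:length}, an $f\in\Homeo([0,1])$ is singular (property~\eqref{f:full to null}) if and only if the length of its graph is $2$ (property~\eqref{e:length}). Hence the set $\{f\in\Homeo([0,1]): \length(\graph(f))=2\}$ coincides with the prevalent set of singular homeomorphisms, so it is prevalent as well.
\end{proof}
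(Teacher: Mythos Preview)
Your proof is correct and follows exactly the same approach as the paper, which simply states that Theorems~\ref{t:prevalent} and~\ref{t:length} immediately yield the corollary. Your write-up is slightly more detailed in spelling out which implications are used, but the argument is identical.
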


New we turn to Question \ref{q:Hd}, the natural generalization to higher dimensions. First we answer this question in the generic case, then we partially answer it in the prevalent case. Somewhat surprisingly, the $\mathcal{H}^d$-measure tend to be infinite.

\begin{theorem} \label{t:inf}
For every $d \ge 2$ for the generic $f \in \HomeoId$ we have $\mathcal{H}^d(\graph(f)) = \infty$.
\end{theorem}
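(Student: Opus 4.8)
The plan is to exhibit a dense $G_\delta$ set of homeomorphisms whose graphs have infinite $\mathcal{H}^d$-measure. The natural lower bound to exploit is that $\mathcal{H}^d$ cannot decrease under a $1$-Lipschitz map, in particular under the coordinate projections $\pi_{[0,1]^d}\colon \graph(f) \to [0,1]^d$ (which is a bijection) and $\pi_{\R^d}\colon \graph(f)\to [0,1]^d$, $(x,f(x))\mapsto f(x)$. To get a measure \emph{larger} than $1$ one wants to cut $[0,1]^d$ into many pieces on which $f$ is, alternately, ``almost vertical'' (expanding in the image direction) so that each piece of the graph already projects onto something of $\mathcal{H}^d$-measure close to $1$ in the range. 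Concretely, I would aim for the following quantitative statement: for every $g\in\HomeoId$, every $\eps>0$ and every $M\in\N$ there is $f\in B(g,\eps)$ and finitely many pairwise non-overlapping cubes $Q_1,\dots,Q_M\subseteq[0,1]^d$ such that $\lambda^d(f(Q_j))\ge 1-\eps$ for each $j$; then, since the pieces $\graph(f|_{Q_j})$ are disjoint Borel sets and each projects onto $f(Q_j)$, we get $\mathcal{H}^d(\graph(f))\ge \sum_{j=1}^M (1-\eps)\ge M(1-\eps)$.

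The construction of such an $f$ should be by the same ``radial blow-up'' technique already used repeatedly in the paper: inside a small ball $B\subseteq Q_j$ disjoint from the (nowhere dense) bad set one pushes almost all the mass of $Q_j$ into $B$ by a homeomorphism fixing $\partial Q_j$, exactly as in the first three lemmas of Section~\ref{s:gen_sing} (the $h\in\HomeoId$ fixing the boundary with $\lambda^d(h(B(x,r)))>1-\eps$). Doing this simultaneously and independently in $M$ disjoint cubes, and composing with $g$, keeps us in $B(g,\eps)$ provided the cubes are small enough (uniform continuity of $g$), and arranges $\lambda^d((g\circ h)(Q_j))\ge 1-\eps$ for all $j$. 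This gives density. For the $G_\delta$ part, I would set
\begin{equation*}
\iG_M=\Bigl\{f\in\HomeoId : \exists\ \text{non-overlapping cubes } Q_1,\dots,Q_M\subseteq[0,1]^d \text{ with } \lambda^d(f(Q_j))>1-\tfrac1M \ \forall j\Bigr\},
\end{equation*}
check that each $\iG_M$ is open (finitely many cubes, outer regularity of $\lambda^d$, and continuity of $f\mapsto\lambda^d(f(Q))$ in the maximum metric on $\HomeoId$) and dense (by the construction just sketched, with $\eps=1/M$), and conclude that every $f\in\bigcap_M\iG_M$ has $\mathcal{H}^d(\graph(f))=\infty$ via the projection argument above, letting $M\to\infty$. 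Borel measurability of the graph pieces follows from \cite[Corollary~15.2]{Ke} since they are continuous injective images of Borel sets, just as in the proof of Theorem~\ref{t:length}.

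The main obstacle I expect is the openness of $\iG_M$: the quantity $\lambda^d(f(Q))$ is only upper semicontinuous in general (outer regularity gives that $\{f:\lambda^d(f(Q))<c\}$ is open, not the reverse), so a naive definition with $\ge$ on a fixed compact cube need not be open. The fix is to phrase membership in $\iG_M$ using \emph{open} cubes $Q_j$ and the condition that $f(Q_j)$ \emph{contains} a finite union of small cubes of total measure $>1-1/M$; containment of a compact set in the open set $f(Q_j)$ is a stable (open) condition under uniform perturbation of $f$, and by inner regularity it can still be achieved in the density step. With that adjustment the rest is routine, and the projection inequality $\mathcal{H}^d(\graph(f))\ge\sum_j \lambda^d(f(Q_j))$ is immediate because $\mathcal{H}^d$ restricted to $[0,1]^d\subseteq\R^d$ equals $\lambda^d$ and the $1$-Lipschitz projection onto the second factor maps the disjoint pieces $\graph(f|_{Q_j})$ onto the $f(Q_j)$.
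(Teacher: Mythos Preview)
There is a genuine and fatal gap: your central quantitative claim is impossible for homeomorphisms. If $Q_1,\dots,Q_M\subseteq[0,1]^d$ are pairwise non-overlapping and $f\in\HomeoId$, then $f(Q_1),\dots,f(Q_M)$ are pairwise non-overlapping subsets of $[0,1]^d$ as well, hence
\[
\sum_{j=1}^M \lambda^d(f(Q_j))\le \lambda^d\bigl([0,1]^d\bigr)=1.
\]
So you can never arrange $\lambda^d(f(Q_j))\ge 1-\eps$ for more than one index $j$ (when $\eps<1/2$), and the lower bound $M(1-\eps)$ is unattainable for $M\ge 2$. The radial blow-up lemma you cite from Section~\ref{s:gen_sing} produces one ball whose image has measure close to $1$; it cannot do this for several disjoint balls simultaneously, because their images remain disjoint inside a set of total measure $1$.

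More generally, the projection argument alone is too weak here: projecting $\graph(f)$ onto either factor is a bijection onto $[0,1]^d$, so all it ever yields is $\mathcal{H}^d(\graph(f))\ge \mathcal{H}^d([0,1]^d)$, a fixed finite bound. Getting $\mathcal{H}^d(\graph(f))$ arbitrarily large requires exploiting that the graph can be very ``slanted'' so that its $d$-dimensional content is much larger than that of either coordinate shadow. The paper does this via the Area Formula: one perturbs $f$ by composing with a ``slide'' $\Phi$ built from a steep zigzag $\varphi$, so that on a positive-measure set the Jacobian $JG(x)=\sqrt{\det(G'(x)^TG'(x))}$ of $G=\id\times g$ is forced large (Lemma~\ref{l:twist}). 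This gives density of $\{f:\mathcal{H}^d(\graph(f))>n\}$, but only among homeomorphisms Lipschitz on some cube; that is why the Appendix (density of somewhere smooth homeomorphisms) is needed. Openness then comes from a nontrivial lower semicontinuity lemma for $\mathcal{H}^d$ of graphs near a Lipschitz point (Lemma~\ref{l:lsc}). Your outer/inner regularity fix for openness does not address the real issue, since the quantity you want large is simply bounded by $1$.
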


\begin{theorem}\label{t:ccinf} 
For every $d \ge 2$ the set 
\begin{equation*}
\iF=\{f\in \HomeoId: \mathcal{H}^d(\graph(f)) = \infty\}    
\end{equation*} 
is compact catcher, in particular not Haar null. 
\end{theorem}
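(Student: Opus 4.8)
The plan is to show that $\iF$ is compact catcher: given an arbitrary compact $\iK\subseteq\HomeoId$ and $\eps>0$, I must produce $f\in B(\id,2\eps)$ — or more precisely left and right translates bringing all of $\iK$ into $\iF$ — so that $\iH^d(\graph(g\circ f))=\infty$ for every $g\in\iK$. This is exactly the same setup as Theorem~\ref{t:nowhere}, and indeed the natural approach is to reuse the perturbation $f$ constructed there (the ``twist map'' on the solid torus $T=B\times[0,1]^{d-2}$ built from the functions $h$ and $\varphi$ of Corollary~\ref{c:easy}). The point is that the conclusion \eqref{eq1}, namely $\limsup_{y\to x}|g(f(x))-g(f(y))|/|x-y|=\infty$ for all $x$, should be upgradeable to an infinite-Hausdorff-measure statement about the graph, because at scale $2^{-n}$ the map $g\circ f$ stretches a ball of radius $2^{-n}$ around any point to a set of diameter $\gtrsim 1/n$, and this happens simultaneously at roughly $2^{nd}$ disjoint locations.

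The key quantitative step is the following. Fix $n$ large. Cover $[0,1]^d$ by $\asymp 2^{nd}$ disjoint balls $B_i$ of radius $\asymp 2^{-n}$. From the construction in Theorem~\ref{t:nowhere}, for each $i$ there is a pair of points $x_i,x_i'\in B_i$ with $|x_i-x_i'|\le 2^{-n}$ and $|g(f(x_i))-g(f(x_i'))|\ge 1/n$ (using $s_n$ and the oscillation-$4\sqrt{s_n}$ property of $\varphi$). Now consider the graph $\graph(g\circ f)\subseteq[0,1]^d\times\R^d$. The two graph points $(x_i,g(f(x_i)))$ and $(x_i',g(f(x_i')))$ lie over $B_i$ and are at distance $\ge 1/n$ in the vertical direction; moreover the portion of the graph over $B_i$, being connected (continuous image of a ball), has diameter $\ge 1/n$. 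I would then apply a lower bound for $\iH^d$: if $\graph(g\circ f)=\bigcup_j A_j$ is any cover by sets of small diameter, then the vertical fibers force many of the $A_j$ to be small; more efficiently, project the graph over $B_i$ to a suitable $d$-plane to get a set of diameter $\ge c/n$, or use that a connected set of diameter $\delta$ has $\iH^1\ge\delta$ and combine with a Fubini/coarea-type counting over the $2^{nd}$ boxes. The cleanest route: partition the $B_i$'s into the horizontal cube directions and note that over a ``column'' of $\asymp 2^{n(d-1)}$ boxes the graph contains a curve of length $\ge \asymp 2^{n(d-1)}\cdot(1/n)\cdot 2^{-n}$... — more carefully, restrict attention to a single $(d-1)$-dimensional slab and use that $\iH^d$ of the graph is at least (up to constants) the integral over the slab of the oscillation, giving a lower bound $\gtrsim 2^{nd}\cdot 2^{-n(d-1)}\cdot(1/n)=2^n/n\to\infty$. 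Letting $n\to\infty$ gives $\iH^d(\graph(g\circ f))=\infty$.

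The main obstacle I anticipate is making the $\iH^d$ lower bound rigorous and uniform in $g\in\iK$: one needs a clean geometric lemma saying that if a compact connected set $Y\subseteq[0,1]^d\times\R^d$ projects onto (a copy of) a $d$-dimensional set of measure $\ge c$ in every box of a fine grid and in addition oscillates vertically by $\ge\eta$ in each box, then $\iH^d(Y)\ge c'\eta\cdot(\#\text{boxes})\cdot(\text{box volume in the base})^{?}$ — the bookkeeping with exponents is where care is needed, and one must avoid double-counting when a single covering set $A_j$ meets several boxes. I would handle this by the standard trick: fix a $\delta$-cover $\{A_j\}$ with $\delta$ much smaller than $2^{-n}$ and $1/n$; each $A_j$ meets at most one box $B_i$ and contributes $(\diam A_j)^d$; within a fixed box the points of $\graph(g\circ f)$ have a horizontal spread $\asymp 2^{-n}$ and (along the relevant curve) a vertical spread $\ge 1/n$, so $\sum_{A_j\subseteq B_i}(\diam A_j)^d \ge c\,(2^{-n})^{d-1}\cdot(1/n)$ by an isoperimetric/projection estimate applied inside the box; summing over the $\asymp 2^{nd}$ boxes yields $\iH^d_\delta(\graph(g\circ f))\ge c\,2^n/n$, independent of $\delta$ and of $g$, hence $\iH^d=\infty$. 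Since the same $f$ works for all $g\in\iK$ and $f\in B(\id,2\eps)$, this shows $\iF$ contains $g\circ f$... — in fact it shows $\iK\circ f\subseteq\iF$, so $\iF$ is compact catcher, and by the Fact in the Preliminaries it is not Haar null.
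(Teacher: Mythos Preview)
Your setup is right (right-translate $\iK$ by a single highly oscillatory $f$), but the heart of the proof --- the $\iH^d$ lower bound for $\graph(g\circ f)$ --- has a real gap. Knowing that in each box $B_i$ of side $2^{-n}$ the graph is connected, projects onto $B_i$, and has vertical spread $\ge 1/n$ does \emph{not} yield $\sum_{A_j\cap B_i\neq\emptyset}(\diam A_j)^d\ge c\,(2^{-n})^{d-1}(1/n)$: think of a function that is zero on $B_i$ except for a single narrow spike of height $1/n$; its graph over $B_i$ has $\iH^d$ barely larger than $(2^{-n})^d$. No ``isoperimetric/projection estimate'' bridges this. Your coarea idea (``integral over the slab of the oscillation'') is the one that would actually work --- slice by $\{\alpha=\text{const},\,y=\text{const}\}$, use that each radial curve has $\iH^1\ge 2^n/n$, and invoke Eilenberg's inequality --- but you do not develop it, and it requires identifying the slicing direction and citing the coarea-type inequality explicitly.

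The paper takes a different and cleaner route. Instead of the polar twist of Theorem~\ref{t:nowhere} built from Corollary~\ref{c:easy} (which only gives oscillation bounds on $\varphi$), it uses the full Lemma~\ref{l:s_n}, which produces $\varphi$ with the stronger \emph{measure} estimate $\lambda(\{z\in I:\varphi(z)\in J\})\le q_n\lambda(I)$ for intervals $|I|=2^{-n}$, $|J|=s_n$, with $q_n\downarrow 0$. The translating homeomorphism is the Cartesian shear $\Phi(x)=(x_1+\varphi(x_2),x_2,\dots,x_d)$ on $T=[\tfrac14,\tfrac12]\times[0,1]^{d-1}$. The $\iH^d$ lower bound then comes from the Mass Distribution Principle: push $\lambda^d$ forward to the graph via $x\mapsto(x,g(\Phi(x)))$, and show that any product of cubes $Q_1\times Q_2$ of side $2^{-n}$ carries mass $\le q_n2^{-nd}$. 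The key observation is that $g(\Phi(x))\in Q_2$ forces $x_1+\varphi(x_2)$ into an interval of length $\le s_n$ (by the definition of $s_n$), so for each fixed $x_1$ the set of admissible $x_2$ has measure $\le q_n2^{-n}$ by Lemma~\ref{l:s_n}. This replaces your unproved covering estimate by a direct measure computation.
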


We need to prove two lemmas first.

\begin{lemma}
\label{l:twist}
Let $d\geq 2$ and $C,\varepsilon \in \R^+$ be given. Then for every $f \in \HomeoId$ that is Lipschitz on a cube $Q \subseteq [0, 1]^d$ there exists $g \in \HomeoId$ that is also Lipschitz on $Q$ such that $g \in B(f,\eps)$ and $\mathcal{H}^d(\graph (g|_Q)) > C$.
\end{lemma}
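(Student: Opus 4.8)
The plan is to localize: since $f$ is Lipschitz on the cube $Q$, it is enough to perturb $f$ on a small subcube $Q'\subseteq \inter Q$ (using a homeomorphism that equals the identity near $\partial Q'$ so that the modification glues continuously to $f$ outside and keeps $g$ a homeomorphism), and to make the graph of the perturbed map over $Q'$ have large $\iH^d$-measure. After rescaling we may assume $Q' = [0,1]^d$ and, composing with affine maps in the target, that we are simply trying to build, inside an arbitrarily small $\eps$-ball of a given Lipschitz homeomorphism, a Lipschitz homeomorphism whose graph has $\iH^d$-measure bigger than $C$. The key geometric idea is the ``twist'' alluded to in the name: if we take the last two coordinates and apply a map of the form $(x_{d-1},x_d)\mapsto$ (rotation by angle $\theta(x)$ depending on the first coordinate $x_1$, say), with $\theta$ oscillating rapidly, then the graph is forced to be long. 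Concretely, think first of $d=1$: the graph of a function on $[0,1]$ that oscillates with total variation $V$ has length at least $V$; in our higher-dimensional setting the analogue is that if along almost every line parallel to the $x_1$-axis the map has large ``vertical'' oscillation with many oscillations, then projecting the graph appropriately and using that $\iH^d$ does not increase under $1$-Lipschitz maps (in particular orthogonal projections onto coordinate $d$-planes) we get a lower bound proportional to the number of oscillations.

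In more detail, I would fix a large integer $M$ (to be chosen depending on $C$ and on the Lipschitz constant of $f$ on $Q$), partition $[0,1]$ in the $x_1$-direction into $M$ equal slabs, and on consecutive slabs push the image alternately ``up'' and ``down'' in the $x_d$-direction by an amount comparable to $\eps$ via a homeomorphism $h\in B(\id,\eps)$ supported in the interior, so that $g = f\circ h$ (or $h\circ f$, whichever keeps the bookkeeping cleanest) still lies in $B(f,\eps)$ and is Lipschitz on $Q$. Over the interface between two consecutive slabs the graph contains, for each fixed value of $(x_2,\dots,x_d)$, a piece whose $x_d$-extent is $\gtrsim \eps$; running over a set of $(x_2,\dots,x_d)$ of measure close to $1$ and over all $M$ interfaces, and using the non-increase of $\iH^d$ under the orthogonal projection of $\graph(g|_{[0,1]^d})\subseteq\R^{2d}$ onto the $d$-plane spanned by $e_1,\dots,e_{d-1},e_d'$ (the $d$-th target coordinate), one gets $\iH^d(\graph(g|_{[0,1]^d}))\gtrsim M\eps^{?}$ — more precisely, the $(d-1)$-dimensional ``base'' measure times the per-interface vertical displacement, which after rescaling by the small subcube side length $\delta$ contributes $\gtrsim M\delta^{d-1}\cdot(\text{displacement})$. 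Since the allowed displacement inside $B(f,\eps)$ is of order $\eps$ but the number of oscillations $M$ is unconstrained, choosing $M$ large enough gives $\iH^d(\graph(g|_Q))>C$. The only care needed is that $h$ must be a genuine homeomorphism of the cube fixing the boundary: this is standard — build $h$ slab by slab as a composition of shears/radial maps that are the identity near the slab boundaries, exactly as in Lemma~4.? type constructions earlier (cf.\ the radial homeomorphisms used in Section~\ref{s:gen_sing}).

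The main obstacle I anticipate is not the existence of the twisting homeomorphism but the lower bound on $\iH^d$ of its graph: $\iH^d$ lower bounds are generally delicate, and here one must be careful that the ``area'' contributed by the oscillations is not cancelled when passing to a $d$-dimensional projection. The clean way around this is to avoid estimating $\iH^d(\graph(g|_Q))$ directly and instead use a projection/Eilenberg-inequality style argument: fix the last target coordinate $u = g_d$, and for each level of $u$ the slice of the graph is (a graph over) a $(d-1)$-dimensional region; integrating the $(d-1)$-measures of these slices against $du$ and using that many distinct $u$-levels are attained over a base set of near-full measure (because of the $M$ forced oscillations of size $\sim\eps$) yields $\iH^d \gtrsim \iL^{d-1}(\text{base})\cdot M\eps$ up to dimensional constants. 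This reduces everything to a one-dimensional total-variation bound applied fiberwise, which is robust. With $M$ chosen after $\eps$ and the Lipschitz constant, the bound $\iH^d(\graph(g|_Q))>C$ follows, completing the proof.
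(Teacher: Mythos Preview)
Your overall strategy---post-compose (or pre-compose) $f$ with a highly oscillating shear and then lower-bound $\iH^d$ of the graph by a projection/coarea/indicatrix argument---is the same as the paper's.  The paper sets $g=\Phi\circ f$ with $\Phi(y)=(y_1+\varphi(y_2),y_2,\dots,y_d)$ (tapered near $\partial[0,1]^d$), where $\varphi$ is a zig-zag with $|\varphi'|$ huge, and then estimates $\iH^d(\graph(g|_Q))$ via the Area Formula and Cauchy--Binet rather than a projection; but these are morally the same computation.

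There is, however, a genuine gap in your plan.  You fix in advance both the direction of oscillation (slabs in $x_1$, push in the $d$-th target coordinate) and the projection (onto $\mathrm{span}(e_1,\dots,e_{d-1},e_d')$).  This choice cannot work uniformly in $f$.  The point is that with $g=h\circ f$ and $h$ a shear that moves $y_d$ by a zig-zag in $y_k$, the chain rule gives $\partial g_d/\partial x_j=\partial f_d/\partial x_j+\psi'(f_k)\,\partial f_k/\partial x_j$; the ``many oscillations'' term is large only on the set where $|\partial f_k/\partial x_j|$ is bounded below.  For a bad $f$ (e.g.\ one that permutes coordinates), your fixed $(k,j)$ and your fixed projection may see no oscillation at all.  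The paper's crucial preliminary step---which your plan is missing---is to first argue that, because $f$ is a Lipschitz homeomorphism on $Q$, one has $\det f'\neq 0$ on a set of positive measure (otherwise $f(Q)$ would be null), and hence there exist a coordinate index pair and $\rho>0$ with $|\partial f_2/\partial x_j|\ge\rho$ on a set $P\subseteq Q$ of positive measure.  Only after this can one choose the shear direction and the minor/projection to match $(2,j)$; then the chain rule gives $|\partial g_1/\partial x_j|>C/\lambda^d(P)$ on $P$, and integrating yields the bound.

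Once you insert this step, your Eilenberg/indicatrix route would work, but the paper's Area-Formula route is cleaner: since $g$ is Lipschitz on $Q$, write $G=\id\times g$ and use $\iH^d(\graph(g|_Q))=\int_Q JG$; by Cauchy--Binet, $JG(x)$ dominates the absolute value of any single $d\times d$ minor of $G'(x)$, and choosing the minor that replaces the $j$-th row of the identity block by the row of $\partial g_1/\partial x_\bullet$ reduces everything to the single partial $|\partial g_1/\partial x_j|$.  This avoids having to control multiplicities of a projection fibre by fibre.
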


\begin{proof}
By shrinking $Q$ if necessary, we can assume
that the distance between $Q$ and the boundary of $[0,1]^d$ is positive. Then, since $f$ is a homeomorphism, there exists $\delta \in \R^+$ such that
\begin{equation}
\label{e:delta}    
f(Q) \subseteq (\delta, 1-\delta)^d.
\end{equation}

First we check that $\lambda^d \left( \{ (x_1, \dots, x_d) \in Q : \det f'(x_1, \dots, x_d) \neq 0 \} \right) > 0$ (note that $f$ is Lipschitz on $Q$, and therefore differentiable almost everywhere on $Q$ by the Rademacher Theorem~\cite[Theorem~3.1.6]{Fe}). Indeed, if the determinant were $0$ almost everywhere on $Q$, then by the implication \eqref{f:derivative} $\Rightarrow$ \eqref{f:full to null} of Theorem \ref{t:versions of singularity} we could find a Borel set $F \subseteq Q$ with $\lambda^d(Q \setminus F)=0$ and $\lambda^d(f(F)) = 0$, but using that $f$ is Lipschitz on $Q$ we would also have $\lambda^d(f(Q \setminus F))=0$, hence $\lambda^d(f(Q))=0$, which is absurd since $f$ is a homeomorphism.

This implies that in particular $f_2'$ cannot be the zero vector almost everywhere on $Q$ (here $f_2$ is the second coordinate function of $f$). Hence we can find a set $P \subseteq Q$ with $\lambda^d(P) > 0$ such that $f_2'$ exists and $f_2' \neq \mathbf{0}$ on $P$. By partitioning $P$ into $d$ many pieces and picking the one with positive measure we can assume that there is a fixed coordinate $j$ such that $\frac{\partial f_2}{\partial x_j}$ exists and  $\frac{\partial f_2}{\partial x_j} \neq 0$ on $P$. And finally, by partitioning $P$ into countably many pieces and picking the one with positive measure we can assume that 
\begin{equation}
\label{e:P}
\begin{split}
\exists P \subseteq Q \text{ with } \lambda^d(P) > 0 \ \exists j \in \{1, \dots, d\} \ \exists \varrho \in \R^+ \text{ such that } \\ \frac{\partial f_2(x)}{\partial x_j} \text{ exists and } \left|\frac{\partial f_2(x)}{\partial x_j}\right| \geq \varrho \text{ for every } x\in P.
\end{split}
\end{equation}

The sets of the form $f_2^{-1} (a)$ (where $a$ ranges over $[0, 1]$) are pairwise disjoint and measurable, hence $\lambda^d(f_2^{-1} (a)) =0$ for all but countably many $a \in [0, 1]$.
Let $D = \{a \in [0, 1] : \lambda^d(f_2^{-1} (a)) =0 \}$. Then $D$ is clearly dense.

Let 
\begin{equation*} \Lip(f_1|_Q)=\min \{c: |f_1(x)-f_1(y)|\leq c|x-y| \text{ for all } x,y\in Q\}\in \R^+
\end{equation*} 
be the Lipschitz constant of the Lipschitz function $f_1|_Q$. As $D$ is dense, we can fix a continuous piecewise affine function $\varphi \colon [0,1] \to \R$ (a `zig-zag function') with the following properties:

\begin{enumerate}
\item 
\label{e:phi1}
$|\varphi| < \min\{ \eps, \delta \}$,
\item
\label{e:phi2}
the points of non-differentiability of $\varphi$ are in $D$,
\item
\label{e:phi3}
$|\varphi'| > \frac{1}{\varrho} \left(\frac{C}{\lambda^d(P)} +\Lip(f_1|_Q) \right)$ wherever the derivative exists,
\end{enumerate}
where $C$ is given in the statement of the theorem.

Next we slightly modify $P$. Since $f$ is differentiable almost everywhere, we can assume by removing a nullset from $P$ (i.e.~\eqref{e:P} will still hold) that 
\begin{equation}
\label{e:P2}
f \text{ is differentiable at every point of } P.
\end{equation}

Similarly, for the finitely many points $a$ of non-differentiability of $\varphi$ we remove the set $f_2^{-1} (a) \cap P$ from $P$. By \eqref{e:phi2} these sets are of measure zero, hence \eqref{e:P} still holds, but now we also have that 
\begin{equation}
\label{e:P3}
\varphi \text{ is differentiable at } f_2(x) \text{ for every } x \in P.
\end{equation}

Now we construct a piecewise affine homeomorphism $\Phi \in \HomeoId$ as follows. For $(y_1, ..., y_d) \in [0, 1]^d$ let
\begin{equation*} 
\Phi(y_1, ..., y_d) =  
\begin{cases} 
( y_1 (1 +\delta^{-1}\varphi(y_2)), y_2, \dots, y_d ) & \text{if } 0\leq y_1\leq \delta,\\
(y_1 + \varphi(y_2), y_2, \dots, y_d) & \text{if } \delta \leq y_1 \leq 1 - \delta,\\
(y_1+\delta^{-1} \varphi(y_2)(1-y_1), y_2, \dots, y_d) & \text{if } 1 - \delta \leq y_1 \leq 1.
\end{cases}
\end{equation*}

A short calculation shows that this map is well defined (the two values in the cases $y_1=\delta$ and $y_1 = 1 - \delta$ agree), and every segment of the form $[0, 1] \times \{(y_2, \dots, y_d)\}$ is an invariant set (note that $|\varphi| < \delta$ by \eqref{e:phi1}). Moreover, it is not hard to see that the map $\Phi$ is bijective
on every such segment, indeed, on $[\delta, 1 - \delta] \times \{(y_2, \dots, y_d)\}$ it is a translation by $\varphi(y_2)$, and on the remaining two small segments $[0, \delta] \times \{(y_2, \dots, y_d)\}$ and $[1 - \delta, 1] \times \{(y_2, \dots, y_d)\}$ it is the unique affine extension that makes it a continuous, (`strictly increasing') bijection of the segment $[0, 1] \times \{(y_2, \dots, y_d)\}$. Since these segments form a partition of $[0, 1]^d$, we obtain that $\Phi$ is a bijection of $[0,1]^d$. Next, the map $\Phi$ is easily seen to be continuous, since it is clearly continuous on the three rectangular boxes where it is defined separately, and since the values agree on the common faces where the rectangular boxes meet. And finally, by compactness, the inverse if $\Phi$ is also continuous, hence $\Phi \in \HomeoId$.

Another easy calculation shows, using that $|\varphi| < \eps$ by \eqref{e:phi1}, that $\Phi \in B(\id, \eps)$. Therefore, $\Phi \circ f \in B(f, \eps)$. Now define
\begin{equation*} 
g = \Phi \circ f.
\end{equation*}
Clearly, $g \in \HomeoId$, and (since $\Phi$ is piecewise affine, and hence Lipschitz), $g$ is Lipschitz on $Q$.
So all that remains to check is $\mathcal{H}^d(\graph (g|_Q)) > C$.

The Hausdorff measure of the graph of a function can be computed by the Area Formula \cite[Theorem~3.2.3]{Fe}. Let
\begin{equation*} 
G = \id \times g\colon [0, 1]^d \to [0, 1]^{2d},
\end{equation*}
that is,
\begin{equation*} 
G(x) =\\
(x, g(x)).
\end{equation*}

Then clearly $\graph (g|_Q) = \range (G|_Q)$. Since $G$ is Lipschitz and one-to-one on $Q$, we can apply the Area Formula which states that
\begin{equation*} 
\mathcal{H}^d(\range (G|_Q)) = \int_Q JG(x) \, \mathrm{d} \lambda^d(x),
\end{equation*}
where $JG(x)\defeq \sqrt{\det\left(G'(x)^T G'(x)\right)}$ is the Jacobian of $G$, which exists almost everywhere on $Q$. In order to show that 
\begin{equation*} 
\mathcal{H}^d(\graph (g|_Q)) = \mathcal{H}^d(\range (G|_Q)) = \int_Q JG(x) \, \mathrm{d} x > C
\end{equation*} 
it suffices to prove that
\begin{equation}
\label{e:JG}
JG(x) \text{ exists and } JG(x) > \frac{C}{\lambda^d(P)}
\end{equation}
at every point of the set $P \subseteq Q$ from \eqref{e:P} above.

As $G$ is Lipschitz on $Q$, by throwing away a final nullset from $P$ we can assume that $G'$, and hence also $JG$ exist at every point of $P$.

Let $x \in P$ be arbitrary. The 
Cauchy--Binet Formula \cite[page~9]{G} implies that $\det\left(G'(x)^T G'(x)\right)$ is the sum of the squares of the determinants of the $d \times d$ sized minors of $G'(x)$. Hence, in order to obtain \eqref{e:JG} it suffices to find a single minor of size $d \times d$ with the absolute value of its determinant greater than $\frac{C}{\lambda^d(P)}$. We claim that the minor obtained by taking the first $d$ rows (which form the $d \times d$ identity matrix) and replacing the $j$th row by the $(d+1)$st works (here $j$ comes from \eqref{e:P}). Some easy linear algebra shows that the determinant of this minor is the $j$th entry of its diagonal, that is, $\frac{\partial g_1(x_1, \dots, x_d)}{\partial x_j}$. Therefore, the proof will be complete if we show that at every point $x=(x_1,  \dots, x_d) \in P$ we have
\begin{equation}
\label{e:abs}
\left|\frac{\partial g_1(x)}{\partial x_j}\right| > \frac{C}{\lambda^d(P)}.
\end{equation}

By the definition of $\Phi$ and by \eqref{e:delta} for every $x=(x_1, ..., x_d) \in Q$ (and even on a neighborhood of $Q$) we have that 
\begin{equation*}
g(x) = (f_1(x) + \varphi(f_2(x)), f_2(x), ..., f_d(x)),
\end{equation*}
hence 
\begin{equation*}
g_1(x) = f_1(x) + \varphi(f_2(x)) \text{ on a neighborhood of } Q.
\end{equation*}
By \eqref{e:P2} and  \eqref{e:P3} these expressions are differentiable at every $x=(x_1,...,x_d) \in P$, hence by the Chain Rule
\begin{equation*}
\frac{\partial g_1(x)}{\partial x_j} = \frac{\partial f_1(x)}{\partial x_j} +
\varphi'(f_2(x)) \frac{\partial f_2(x)}{\partial x_j}.
\end{equation*}
Then \eqref{e:phi3}, \eqref{e:P}, and
$\left|  \frac{\partial f_1(x)}{\partial x_j} \right| \le \Lip(f_1|_Q)$
imply that
\begin{equation*}
\left| \frac{\partial g_1(x)}{\partial x_j}  \right|\geq 
\left| \varphi'(f_2(x)) \right|  \left| \frac{\partial f_2(x)}{\partial x_j} \right|
-\left|  \frac{\partial f_1(x)}{\partial x_j} \right|> \frac{C}{\lambda^d(P)}.
\end{equation*}
This completes the proof of \eqref{e:abs}, and hence the proof of the lemma.
\end{proof}

\begin{remark}
Note that the construction of the map $\Phi$ above was inspired by the so called `slides' from \cite{AK}.
\end{remark}

The following lower semicontinuity result seems to be known, the case of $d=2$ is due to Besicovitch \cite[page~21]{Be}. As we were not able to find a full reference (and for the reader's convenience), we include its proof here.  

\begin{lemma}
\label{l:lsc}
Let $d \ge 2$ and $C>0$. For every homeomorphism $f$ that is Lipschitz on a cube $Q \subseteq [0,1]^d$ satisfying $\mathcal{H}^d(\graph (f|_Q)) > C$ there exists $\varepsilon > 0$ such that for every $h\in B(f, \varepsilon)$ we have  $\mathcal{H}^d(\graph (h|_Q)) > C$.
\end{lemma}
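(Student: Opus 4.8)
The plan is to deduce lower semicontinuity directly from the Area Formula, by reducing the Lipschitz graph over $Q$ to finitely many small pieces on which $f$ is essentially affine and then running a Brouwer-degree argument on each piece. Write $G_g=\id\times g\colon[0,1]^d\to\R^{2d}$, $G_g(x)=(x,g(x))$, so that $\graph(g|_A)=G_g(A)$. Since $f$ is Lipschitz on $Q$, the map $G_f|_Q$ is Lipschitz and injective, so by the Area Formula \cite[Theorem~3.2.3]{Fe} one has $\mathcal H^d(\graph(f|_Q))=\int_Q JG_f\,d\lambda^d$, where $JG_f(x)=\sqrt{\det(I_d+Df(x)^{\mathsf T}Df(x))}$ is defined $\lambda^d$-almost everywhere by Rademacher's Theorem.

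First I would fix reals $C<C''<\mathcal H^d(\graph(f|_Q))$ and $\delta\in(0,\tfrac12)$ with $(1-2\delta)^dC''>C$. A routine Vitali covering argument, using Rademacher's Theorem and the Lebesgue differentiation theorem for $JG_f\in L^1(Q)$, provides finitely many pairwise disjoint closed balls $B_i=B(x_i,r_i)\subseteq\inter Q$ ($i=1,\dots,M$) such that $Df(x_i)$ exists, $\|f-L_i\|_{B_i}\le\delta r_i$ where $L_i(y):=f(x_i)+Df(x_i)(y-x_i)$ is the linearization, and $\sum_{i=1}^M JG_f(x_i)\,\lambda^d(B_i)>C''$; here $\|\cdot\|_{B_i}$ denotes the supremum norm over $B_i$, and the Lebesgue-point condition on $x_i$ is what lets us pass from $\int_{B_i}JG_f$ to $JG_f(x_i)\lambda^d(B_i)$ with only a negligible loss. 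I then set $\varepsilon:=\delta\min_{1\le i\le M}r_i>0$; this is the $\varepsilon$ claimed by the lemma.

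Now fix $h\in B(f,\varepsilon)$, so that $\|h-L_i\|_{B_i}\le\|h-f\|+\|f-L_i\|_{B_i}\le2\delta r_i$ for each $i$. Fix $i$, put $P_i:=\graph(L_i)\subseteq\R^{2d}$ (an affine $d$-plane), let $G_{L_i}\colon\R^d\to P_i$, $G_{L_i}(y)=(y,L_i(y))$, be the associated affine isomorphism whose linear part sends $v$ to $(v,Df(x_i)v)$ and hence satisfies $|G_{L_i}v-G_{L_i}v'|\ge|v-v'|$, so that $G_{L_i}^{-1}$ is $1$-Lipschitz, and let $\pi_i\colon\R^{2d}\to P_i$ be the orthogonal projection. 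The crucial map is
\[
\Psi_i:=G_{L_i}^{-1}\circ\pi_i\circ(G_h|_{B_i})\colon B_i\to\R^d,\qquad \Psi_i(y)=G_{L_i}^{-1}\bigl(\pi_i(y,h(y))\bigr).
\]
Since $\pi_i$ fixes the point $(y,L_i(y))\in P_i$, we have $\Psi_i(y)-y=G_{L_i}^{-1}(\pi_i(y,h(y)))-G_{L_i}^{-1}(\pi_i(y,L_i(y)))$, and because $\pi_i$ and $G_{L_i}^{-1}$ are $1$-Lipschitz this gives $\|\Psi_i-\id\|_{B_i}\le\|h-L_i\|_{B_i}\le2\delta r_i$. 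Applying Fact~\ref{f:1} (after translating $x_i$ to the origin, with $\beta=r_i$ and $\alpha=2\delta r_i<\beta$) we conclude $B(x_i,(1-2\delta)r_i)\subseteq\Psi_i(B_i)$.

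To finish I would chain the estimates. The projection $\pi_i$ is $1$-Lipschitz, hence does not increase $\mathcal H^d$; by definition of $\Psi_i$ we have $\pi_i(\graph(h|_{B_i}))=G_{L_i}(\Psi_i(B_i))$; and for every Borel $A\subseteq\R^d$, $\mathcal H^d(G_{L_i}(A))=JG_f(x_i)\,\lambda^d(A)$, since the Jacobian of the affine map $G_{L_i}$ is $\sqrt{\det(I_d+Df(x_i)^{\mathsf T}Df(x_i))}=JG_f(x_i)$. Hence
\[
\mathcal H^d(\graph(h|_{B_i}))\ge\mathcal H^d\bigl(\pi_i(\graph(h|_{B_i}))\bigr)=JG_f(x_i)\,\lambda^d(\Psi_i(B_i))\ge(1-2\delta)^d\,JG_f(x_i)\,\lambda^d(B_i).
\]
As the $B_i$ are pairwise disjoint compacta, the sets $\graph(h|_{B_i})$ are pairwise disjoint subsets of $\graph(h|_Q)$, so summing yields $\mathcal H^d(\graph(h|_Q))\ge(1-2\delta)^d\sum_{i=1}^M JG_f(x_i)\lambda^d(B_i)>(1-2\delta)^dC''>C$, as desired. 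The one genuinely nonroutine point is the choice of $\Psi_i$: composing the orthogonal projection onto the tangent plane of the affine piece with the inverse of its linearization converts the $\mathcal H^d$-measure of that piece --- which can be arbitrarily large --- into a near-isometric perturbation of the identity on the domain, so that Brouwer's Theorem recovers it up to the harmless factor $(1-2\delta)^d$ with no dimensional constant lost; everything else (the Vitali covering and the Lebesgue-point bookkeeping) is standard.
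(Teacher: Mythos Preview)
Your proof is correct, and the geometric core---project the perturbed graph orthogonally onto the tangent $d$-plane of $\graph(f)$ at a point of differentiability, then use Brouwer (Fact~\ref{f:1}) to see the projection fills a ball of radius $(1-2\delta)r$---is exactly the mechanism the paper uses.

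The packaging, however, is genuinely different. The paper reformulates the lemma as a sequential lower semicontinuity statement, introduces the notion of an \emph{almost uniformly concentrated} sequence of compacta, verifies this condition for $K_n=\graph(f_n|_Q)$ via the tangent-plane/Brouwer argument applied to balls $B((z,f(z)),r)\subseteq\R^{2d}$, and then invokes an external result (Claim~\ref{cl}, from \cite{Hausdorff-semicont}) to conclude. Your argument sidesteps that machinery entirely: you use the Area Formula to write $\mathcal{H}^d(\graph(f|_Q))=\int_Q JG_f$, run a Vitali/Lebesgue-point argument in the \emph{domain} to select finitely many disjoint balls $B_i\subseteq Q$ with $\sum_i JG_f(x_i)\lambda^d(B_i)>C''$, and then apply the tangent-plane/Brouwer step ball by ball. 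This yields an explicit $\varepsilon=\delta\min_i r_i$ and is fully self-contained, at the cost of setting up the Vitali/Lebesgue bookkeeping. The paper's route is more conceptual and actually delivers lower semicontinuity of $f\mapsto\mathcal{H}^d(\graph(f|_Q))$ at Lipschitz $f$, but relies on the cited black box.
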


Before proving Lemma~\ref{l:lsc} we need some preparation. 

\begin{definition}
Let $s>0$. A family $\mathcal{B}$ is said to be an \emph{$s$-almost Vitali covering} of a set $E\subseteq \R^m$ if for $\iH^s$ almost every $x\in E$ we have
\begin{equation*}
\inf\{ \diam B: x\in B,\, B\in \iB\}=0.
\end{equation*} 

Let $(\mathcal{K}(\R^m),d_{H})$ be the non-empty compact subsets of $\R^m$ endowed with the \emph{Hausdorff metric}, that is, for all compact sets $K_1,K_2\subseteq \R^m$ we have
\begin{equation*}
d_{H}(K_1,K_2)=\min \left\{r: K_1\subseteq B(K_2,r) \textrm{ and } K_2\subseteq B(K_1,r)\right\},    
\end{equation*}
where $B(A,r)=\{x\in \R^m: \exists y\in A \textrm{ such that } |x-y|\leq r\}$. Then $(\mathcal{K}(\R^m),d_{H})$ is a Polish space, see \cite{Ke} for more on this concept.

Let $K_n\subseteq \R^m$ be compact sets such that $K_n$ converges to $K$ in the Hausdorff metric. We say that $\{K_n\}_{n\geq 1}$ is \emph{almost uniformly concentrated in dimension $s$} if for every $\varepsilon > 0$ there is an $s$-almost Vitali covering $\mathcal{B}$ of $K$ such that for each $B\in \iB$, 
\begin{equation} \label{e:ucon}
\limsup_{n\to \infty} \iH^s (K_n\cap B) \geq (1 - \varepsilon)(\diam B)^s.
\end{equation}    
\end{definition}
    
For the proof of the following claim see \cite[Theorem~10.14]{Hausdorff-semicont} after the straightforward modifications.

\begin{claim} \label{cl}
Assume that $\{K_n\}_{n\geq 1}$ is an almost uniformly concentrated sequence of compact sets in $\R^m$ in dimension $s>0$ and $K_n$ converges to $K$ in the Hausdorff metric. Then 
\begin{equation*}
\liminf_{n\to \infty} \iH^s(K_n)\geq \iH^s(K).  \end{equation*}
\end{claim}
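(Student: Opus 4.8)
The plan is to prove \autoref{cl} along the classical measure-theoretic lines behind Golab-type lower semicontinuity theorems, reducing everything to a density comparison for a weak-$*$ limit of the measures $\iH^{s}|_{K_{n}}$. We may assume $\liminf_{n}\iH^{s}(K_{n})<\infty$, since otherwise there is nothing to prove. Passing to a subsequence we may assume $\iH^{s}(K_{n})\to L:=\liminf_{n}\iH^{s}(K_{n})$, and since $K_{n}\to K$ in the Hausdorff metric forces all the $K_{n}$ and $K$ into one fixed ball, the finite Borel measures $\mu_{n}:=\iH^{s}|_{K_{n}}$ have uniformly bounded mass and a common compact support; after a further subsequence they converge weakly-$*$ to a finite Borel measure $\mu$ with $\mu(\R^{m})=\lim_{n}\mu_{n}(\R^{m})=L$. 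If $x\notin K$, then (as $K$ is compact and $K_{n}\to K$) some $U(x,r)$ is disjoint from $K_{n}$ for all large $n$, so $\mu(U(x,r))=0$; hence $\operatorname{supp}\mu\subseteq K$ and $L=\mu(\R^{m})=\mu(K)$. It therefore suffices to prove $\mu(K)\ge\iH^{s}(K)$.

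For the lower bound on $\mu(K)$ I would fix $\varepsilon\in(0,1)$ and take an $s$-almost Vitali covering $\mathcal{B}$ of $K$ by closed balls witnessing that $\{K_{n}\}$ is almost uniformly concentrated at level $\varepsilon$, so that $\limsup_{n}\mu_{n}(B)\ge(1-\varepsilon)(\diam B)^{s}$ for each $B\in\mathcal{B}$. For a closed ball $B$, weak-$*$ convergence gives $\mu(B)\ge\limsup_{n}\mu_{n}(B)\ge(1-\varepsilon)(\diam B)^{s}$; since the balls of $\mathcal{B}$ have arbitrarily small radius around $\iH^{s}$-a.e.\ point of $K$, this yields $\limsup_{r\downarrow 0}\mu(B(x,r))/(2r)^{s}\ge 1-\varepsilon$ for $\iH^{s}$-a.e.\ $x\in K$. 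The standard density comparison principle (see e.g.\ \cite[Theorem~6.9]{Ma}) — if $\limsup_{r\downarrow 0}\mu(B(x,r))/(2r)^{s}\ge t$ for every $x$ in a Borel set $A$, then $\iH^{s}(A)\le t^{-1}\mu(A)$ — applied with $A=K$ and $t=1-\varepsilon$ gives $\mu(K)\ge(1-\varepsilon)\iH^{s}(K)$. As $L=\mu(K)$ does not depend on $\varepsilon$, letting $\varepsilon\downarrow 0$ gives $\liminf_{n}\iH^{s}(K_{n})=L\ge\iH^{s}(K)$. When $\iH^{s}(K)=\infty$ one runs the same computation with $\iH^{s}(K)$ replaced by an arbitrarily large finite lower bound obtained from a finite disjoint subfamily of a Vitali refinement of $\mathcal{B}$, forcing $L=\infty$.

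The step I expect to be the main obstacle is reconciling the $\limsup_{n}$ appearing in the concentration hypothesis — a statement about the \emph{whole} sequence — with the fact that the weak-$*$ limit $\mu$ only records the behaviour along the subsequence extracted for compactness, along which that $\limsup$ could a priori be strictly smaller; in other words, making the inequality $\mu(B)\ge\limsup_{n}\mu_{n}(B)$ legitimate for enough balls $B$. The clean way around this is to fix the Vitali data before choosing the subsequence: take $\varepsilon_{k}\downarrow 0$, for each $k$ extract from the covering for $\varepsilon_{k}$ a countable disjoint subfamily (Vitali covering theorem for $\iH^{s}$), and then pick a single diagonal subsequence along which $\mu_{n}(B)$ converges for every ball $B$ in every one of these countably many subfamilies; the formulation of the concentration condition adapted here from \cite[Theorem~10.14]{Hausdorff-semicont} is precisely the one guaranteeing that these limits still dominate $(1-\varepsilon_{k})(\diam B)^{s}$, and then one argues with the $\mu_{n}$ along this fixed subsequence throughout. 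Carrying out this bookkeeping carefully, together with the routine checks that the Vitali covering theorem for $\iH^{s}$ on $\R^{m}$ is applicable (including the $\iH^{s}(K)=\infty$ case), is the only genuinely delicate content; the remainder is the classical machinery.
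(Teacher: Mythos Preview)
The paper gives no proof of this claim, only a citation to \cite[Theorem~10.14]{Hausdorff-semicont}, so there is no in-paper argument to compare against. More to the point, the statement as written --- with $\limsup$ in \eqref{e:ucon} --- is false, and your diagonalisation cannot rescue it. Take $m=s=1$, $K=[0,1]$, $K_{2n}=[0,1]$, and $K_{2n+1}$ a finite $\tfrac1n$-net in $[0,1]$. Then $K_n\to K$ in the Hausdorff metric; the family of all closed subintervals of $(0,1)$ is a $1$-almost Vitali covering of $K$ with $\limsup_n\iH^1(K_n\cap B)=\diam B$ for every such $B$, so the sequence is almost uniformly concentrated, yet $\liminf_n\iH^1(K_n)=0<1=\iH^1(K)$. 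You were right to be uneasy about whether the $\limsup$ survives passage to the subsequence realising $\liminf_n\iH^s(K_n)$, but the obstruction is not a matter of bookkeeping: along any subsequence with total mass tending to $0$ one has $\mu_{n_k}(B)\to 0$ for every $B$, however the subsequence is chosen.

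What is evidently intended --- and what the only application, Lemma~\ref{l:lsc}, actually verifies via \eqref{e:N} --- is the condition with $\liminf$ in place of $\limsup$. Under that hypothesis your weak-$*$/density-comparison argument is correct and the subsequence worry disappears: $\liminf$ along any subsequence dominates $\liminf$ along the full sequence, so Portmanteau on the closed set $B$ gives $\mu(B)\ge\limsup_k\mu_{n_k}(B)\ge(1-\varepsilon)(\diam B)^s$ directly, and your final paragraph becomes unnecessary. One residual caveat is that you tacitly assume the covering is by closed balls, which is needed for both Portmanteau and \cite[Theorem~6.9]{Ma}; the paper's definition of an $s$-almost Vitali covering allows arbitrary sets, but in the application the members of $\mathcal B$ are indeed closed balls, so this is harmless there.
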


\begin{proof}[Proof of Lemma~\ref{l:lsc}]
Let $f\in \HomeoId$ be Lipschitz on a cube $Q \subseteq [0, 1]^d$ and assume that $\{f_n\}_{n\geq 1}$ is an arbitrary sequence of homeomorphisms such that $f_n\to f$ uniformly as $n\to \infty$. Then we need to prove that 
\begin{equation} \label{e:liminf} 
\liminf_{n\to \infty} \mathcal{H}^d(\graph (f_n|_Q))\geq \mathcal{H}^d(\graph (f|_Q)).
\end{equation} 

It is enough to show that $K_n\defeq \graph (f_n|_Q)$ is an almost uniformly concentrated sequence in dimension $d$. Indeed, we may assume that $Q$ is closed, so the compact sets $K_n$ converge to $K \defeq \graph (f|_Q)$ in the Hausdorff metric. Therefore, applying Claim~\ref{cl} for $K_n$, $K$, and $s=d$ will finish the proof of \eqref{e:liminf}. 

Let $\eps>0$ be arbitrarily fixed, we need to define a $d$-almost Vitali covering $\iB$ of $K$ for which \eqref{e:ucon} holds with $s=d$. Let 
\begin{equation*} D=\{x\in \inter Q: f \text{ is differentiable at } x\}.
\end{equation*}
For all $z\in D$ define $T_{z}\colon \R^d\to \R^d$ as 
\begin{equation*}
T_{z}(x)=f(z)+f'(z)(x-z).
\end{equation*}
Let us choose $0<\delta<\frac 12$ such that $(1-2\delta)^d\geq 1-\eps$. For all $z\in D$ define $\delta(z)>0$ such that $B(z,\delta(z))\subseteq [0,1]^d$ and for all  $x\in B(z,\delta(z))$ we have 
\begin{equation} \label{e:Tz}
|f(x)-T_z(x)|\leq \frac{\delta}{2} |x-z|.
\end{equation}
Define 
\begin{equation*}
\iB=\{B((z,f(z)),r): z\in D,\, 0<r<\delta(z)\}.    
\end{equation*}
By the Rademacher Theorem $\lambda^d(Q\setminus D)=0$. As $f|_Q$ is Lipschitz, we have \begin{equation*}
\iH^d(\{(x,f(x)): x\in Q\setminus D\})=0,    
\end{equation*} 
thus $\iB$ is really a $d$-almost Vitali covering of $K$.
  
Fix $z\in D$ and $B=B((z,f(z)),r)\in \iB$ with some $0<r<\delta(z)$. Let $T=T_z$ and $V=\{(x,T(x)): x\in \R^d\}$ be the tangent plane at $z$. Choose $N\in \N^+$ such that for all $n\geq N$ and $x\in [0,1]^d$ we have
\begin{equation} \label{e:fn}
    |f_n(x)-f(x)|\leq \frac{\delta}{2}r.
\end{equation}
It is enough to prove that 
\begin{equation} \label{e:N}
\iH^d(K_n\cap B)\geq (1-\eps)(\diam B)^d
\end{equation}
for all $n\geq N$. Fix an arbitrary $n\geq N$. By \eqref{e:fn} and \eqref{e:Tz} for all $x\in B(z,r)$ we obtain
\begin{equation} \label{e:dr}
|f_n(x)-T(x)|\leq |f_n(x)-f(x)|+|f(x)-T(x)|\leq \frac{\delta}{2}r + \frac{\delta}{2}r = \delta r. \end{equation}
Let $B'=V\cap B((z,f(z)),r(1-\delta))$ and $B''=V\cap B((z,f(z)),r(1-2\delta))$. Note that if $y\in B'$ then there is a unique $x\in B(z,r)$ such that $y=(x,T(x))$. Consider the continuous map $S\colon B'\to V$ such that for $y=(x,T(x))\in B'$ we have
\begin{equation} \label{e:pr}
S(y)=\pr_V ((x,f_n(x))),
\end{equation}
where $\pr_V$ is the orthogonal projection to $V$. By \eqref{e:dr} if $(x,T(x))\in B'$ then $(x,f_n(x))\in K_n\cap B$. Therefore, since projections cannot increase the Hausdorff measure, we obtain 
\begin{equation} \label{e:SB}
\iH^d(K_n\cap B)\geq \iH^d (S(B')).
\end{equation}
Let $y=(x,T(x))\in B'$, then $x\in B(z,r)$, \eqref{e:dr}, and \eqref{e:pr} imply
\begin{equation} \label{e:Sy}
|S(y)-y|\leq |(x,f_n(x))-(x,T(x))|=|f_n(x)-T(x)|\leq \delta r.    
\end{equation}
By \eqref{e:Sy} we can apply Fact~\ref{f:1} with $\alpha=\delta r$ and $\beta=r(1-\delta)$ for $S$ composed with a translation. This yields $B''\subseteq S(B')$. Thus
\begin{equation} \label{e:SB'}
 \iH^d (S(B'))\geq \iH^d (B'')=(\diam B'')^d
=((1-2\delta)2r)^d \geq (1-\eps)(\diam B)^d.
\end{equation}
Inequalities \eqref{e:SB} and \eqref{e:SB'} imply \eqref{e:N}, and the proof is complete.
\end{proof}

\begin{definition} Let us say that a map $f$ is \emph{somewhere smooth}, if it is smooth on a non-empty open set (which may depend on $f$).
\end{definition}

\begin{proof}[Proof of Theorem~\ref{t:inf}]
By Corollary \ref{c:approx} below, for every positive integer $d$ the set of somewhere smooth homeomorphisms is dense in $\HomeoId$. Note that a somewhere smooth map is Lipschitz on a suitable cube. Hence 
\begin{equation*}
\iF_n=\{f \in \HomeoId : \mathcal{H}^d(\graph(f)) > n\}    
\end{equation*} is dense for every $n \in \mathbb{N}$ by Lemma \ref{l:twist}, consequently $\iF_n$ contains a dense open set for every $n \in \mathbb{N}$ by Lemma \ref{l:lsc}, therefore 
\begin{equation*}
\{f \in \HomeoId : \mathcal{H}^d(\graph(f))=\infty \}=\bigcap_{n=0}^{\infty} \iF_n
\end{equation*} is co-meager.
\end{proof}

Finally, we prove Theorem~\ref{t:ccinf}. 

\begin{proof}[Proof of Theorem~\ref{t:ccinf}] Let $d\geq 2$ and a compact set $\iK\subseteq \HomeoId$ be fixed. We will construct a homeomorphism $\Phi\in \HomeoId$ such that $f\circ \Phi\in \iF$ for all $f\in \iK$. For all $n\in \N$ define
\begin{equation*}
s_n=\min\{0\leq s\leq 1: |f(x)-f(y)|\geq d2^{-n} \text{ whenever } f\in \iK \text{ and } |x-y|\geq s\}.
\end{equation*}
As $\iK$ is compact, it is easy to see that $s_n\downarrow 0$. By Lemma~\ref{l:s_n} there exist a sequence $q_n\downarrow 0$ and a continuous function $\varphi\colon [0,1]\to [0, \frac 14]$ such that for all $n\in \N$ if $I,J$ are intervals of length $2^{-n}$ and $s_n$, respectively, then 
\begin{equation} \label{e:z}
\lambda(\{z\in I: \varphi(z)\in J\})\leq q_n2^{-n}.
\end{equation}
Let $T=[\frac 14,\frac 12]\times [0,1]^{d-1}$. Similarly as in the proof of Lemma~\ref{l:twist} we can define a homeomorphism $\Phi\in \HomeoId$ such that for all $(x_1,\dots,x_d)\in T$ we have 
\begin{equation*}
\Phi(x_1,\dots,x_d)=(x_1+\varphi(x_2),x_2,\dots,x_d). 
\end{equation*}
Fix an arbitrary $f\in \iK$, we will show that 
\begin{equation} \label{e:fP}
\iH^d(\graph((f\circ \Phi)|_{T}))=\infty.
\end{equation}
Let $\mu$ be the pushforward measure $\lambda^d\circ \Psi^{-1}$, where $\Psi\colon T\to T\times [0,1]^d$ is defined as $\Psi(x)=(x,f(\Phi(x)))$. Then $\mu$ is a Borel measure supported on $\graph((f\circ \Phi)|_{T})$, and for any Borel set $A\subseteq [0,1]^{2d}$ we have 
\begin{equation*}
\mu(A)=\lambda^d(\{x\in T: (x,f(\Phi(x)))\in A\}).
\end{equation*}
Fix an arbitrary $n\in \N$ and cubes $Q_1\subseteq T$ and $Q_2\subseteq [0,1]^d$ of edge length $2^{-n}$, it is enough to prove that 
\begin{equation} \label{eq:mu}
\mu(Q_1\times Q_2)\leq q_n 2^{-nd}.
\end{equation}
Indeed, then the Mass Distribution Principle \cite[Theorem~4.19]{MP} implies that with some $c_d>0$ we have
\begin{equation*} 
\iH^d(\graph((f\circ \Phi)|_{T}))\geq c_d \liminf_{n\to \infty} \frac{1}{q_n} \lambda^{d}(T)=\infty,
\end{equation*}
so \eqref{e:fP} holds. In order to prove \eqref{eq:mu}, define the interval $J$ as the orthogonal projection of $f^{-1}(Q_2)$ to the first coordinate axis. Since $\diam Q_2<d2^{-n}$, the definition of $s_n$ implies that $\diam f^{-1}(Q_2)\leq s_n$, so $\diam J\leq s_n$ as well. Let $Q_1=I_1\times \cdots \times I_d$, by \eqref{e:z} for all $x_1\in I_1$ we obtain
\begin{equation} \label{e:x2}
\lambda(\{x_2\in I_2: \varphi(x_2)\in J-x_1\})\leq q_n 2^{-n}.
\end{equation}
It is straightforward that 
\begin{equation} \label{e:sub}
\{x\in Q_1: f(\Phi(x))\in Q_2\}\subseteq \{x\in Q_1: \varphi(x_2)\in J-x_1\}.
\end{equation}
Hence \eqref{e:sub} and \eqref{e:x2} easily yield
\begin{align*}
\mu(Q_1\times Q_2)&=\lambda^d(\{x\in Q_1: f(\Phi(x))\in Q_2\}) \\
&\leq \lambda^d(\{x\in Q_1: \varphi(x_2)\in J-x_1\}) \\
&=2^{-n(d-2)}\int_{I_1} \lambda(\{x_2\in I_2: \varphi(x_2)\in J-x_1\}) \,\mathrm{d} x_1 \\ 
&\leq q_n 2^{-nd}.
\end{align*}
Thus \eqref{eq:mu} holds, and the proof is complete. 
\end{proof}

\section{Concluding remarks and open problems}

In Theorem~\ref{t:inf} we showed that for $d\geq 2$ the $d$-dimensional Hausdorff measure of the graph of the generic $f \in \Homeo([0, 1]^d)$ is infinite. The next question asks weather this can be slightly improved.

\begin{question}
Let $d\geq 2$ be an integer. Is it true that $\iH^d|_{\graph(f)}$ is non-$\sigma$-finite for the generic $f \in \Homeo([0,1]^d)$?
\end{question}

Since $\graph(f) \subseteq [0,1]^{2d}$, it may even be possible that $\dim_H \graph(f)>d$ for the generic $f \in \Homeo([0, 1]^d)$, where $\dim_H$ is the Hausdorff dimension, see e.g.~\cite{Ma} for the definition. In fact, we can show that if $d\neq 4$ and $d\neq 5$ then the generic $f\in \HomeoId$ satisfies $\dim_H \graph(f)=d$, we plan to publish this in a forthcoming paper. As $\{f \in \Homeo([0, 1]^d) : \dim_H \graph(f)=d\}$ is $G_{\delta}$, the real question is the following.

\begin{question}
Let $d=4$ or $d=5$. Is it true that 
\begin{equation*} 
\{f\in \HomeoId: \dim_{H} \graph(f)=d\}
\end{equation*} is dense in $ \HomeoId$? 
\end{question}

For the case of prevalent homeomorphisms, we do not know whether the graph has infinite $d$-dimensional Hausdorff measure. 

\begin{question}
Let $d\ge 2$ be an integer. What is $\mathcal{H}^d(\graph(f))$ for the prevalent $f \in \Homeo([0,1]^d)$? What is $\dim_{H} \graph(f)$ for the prevalent $f \in \Homeo([0, 1]^d)$?
\end{question}

The answer to the following question may easily be known, but we have been unable to find it in the literature. By setting $Q = [0, 1]^d$ in Lemma \ref{l:lsc} one obtains a form of lower semi-continuity result for the $d$-dimensional Hausdorff measure of a graph of a homeomorphism $f \in \Homeo([0, 1]^d)$: if $f$ is Lipschitz with $\iH^d(\graph(f)) > c$ then there exists $\varepsilon > 0$ such that $\iH^d(\graph(g)) > c$ for all $g\in B(f,\eps)$. This observation motivates the following question. Since the case $d = 1$ is straightforward, we assume $d \ge 2$.

\begin{question}
Let $d \ge 2$ be an integer.
Is the $d$-dimensional Hausdorff measure lower semi-continuous restricted to the graphs of homeomorphisms of $[0, 1]^d$, that is, is the set \begin{equation*}
 \{f\in \HomeoId: \mathcal{H}^d(\graph(f)) > c\}
\end{equation*} open for each $c \in \R$?
\end{question}

\begin{remark}
Instead of using Hausdorff measures in the above problems, one may consider the notion of surface area introduced by Lebesgue \cite{L}. The \emph{Lebesgue area} $L(f)$ of a continuous map $f\colon [0,1]^d\to \R^d$ is obtained by taking the infimum of limit inferiors of surface areas of piecewise affine functions converging uniformly to $f$. Then $L$ is lower semi-continuous, and $L(f)=\iH^d(\graph(f))$ whenever $f$ is Lipschitz, see e.g.~\cite{Fe0} for more on this concept. Using our above results concerning the $\iH^d$ measure of the generic graph, it is straightforward to obtain that $L(f)=\infty$ holds for the generic $f\in \HomeoId$.  
\end{remark}

\section{Appendix (an approximation result in differential topology)}

We establish for convenience a known result needed above. Recall that a map $f$ is called somewhere smooth if it is smooth on a non-empty open set (that may depend on $f$).

\begin{theorem}
\label{t:approx}
Let $d \ge 2$ be an integer, $f \in \Homeo((0,1)^d)$ and $\varepsilon \colon (0,1)^d \to (0, \infty)$ be continuous. Then there exists  a somewhere smooth $g \in \Homeo((0,1)^d)$ such that $|f(x) - g(x)| < \varepsilon(x)$ for every $x \in (0,1)^d$.
\end{theorem}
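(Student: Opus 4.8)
The plan is to use the fact that $g$ need only be smooth on \emph{some} nonempty open set, which turns this into a purely local modification of $f$ supported on one small cube, with the approximation error controlled automatically by uniform continuity. First I would fix the center $a$ of $(0,1)^d$ and, using the uniform continuity of $f$ together with the continuity and positivity of $\varepsilon$, choose a small closed cube $Q_1=a+[-r,r]^d\subseteq(0,1)^d$ with $\diam f(Q_1)<\min_{x\in Q_1}\varepsilon(x)$. It then suffices to produce a homeomorphism $h\colon Q_1\to f(Q_1)$ with $h|_{\partial Q_1}=f|_{\partial Q_1}$ that is smooth on some nonempty open subset of $Q_1$: setting $g=f$ on $(0,1)^d\setminus\interior Q_1$ and $g=h$ on $Q_1$ gives a continuous bijection of $(0,1)^d$, because the images of the two pieces are $(0,1)^d\setminus\interior f(Q_1)$ and $f(Q_1)$ (invariance of domain) and they overlap exactly along $f(\partial Q_1)$, where the pieces agree; this $g$ is a homeomorphism by invariance of domain, and $|g(x)-f(x)|\le\diam f(Q_1)<\varepsilon(x)$ for $x\in Q_1$ while $g=f$ off $Q_1$.

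To build $h$, I would fix concentric round cubes $Q_0\subsetneq Q_0'\subsetneq\interior Q_1$ and concentric round boxes $B_0\subsetneq B_0'\subseteq\interior f(Q_1)$ (possible since $\interior f(Q_1)$ is a nonempty open set), and define $h$ in three layers. On $Q_0$ let $h$ be an affine bijection onto $B_0$ of the same orientation type as $f$; this is already smooth on the nonempty open set $\interior Q_0$, which is all the smoothness we need. On the outer shell $Q_1\setminus\interior Q_0'$ take $h$ to be a homeomorphism onto $f(Q_1)\setminus\interior B_0'$ that equals $f$ on $\partial Q_1$, and on the inner shell $Q_0'\setminus\interior Q_0$ a homeomorphism onto $B_0'\setminus\interior B_0$ interpolating between $h|_{\partial Q_0}$ and $h|_{\partial Q_0'}$. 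These three pieces agree on $\partial Q_0$ and on $\partial Q_0'$, so they glue to the required $h$.

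The two shell homeomorphisms are where the real content lies. For the inner shell, both $Q_0'\setminus\interior Q_0$ and $B_0'\setminus\interior B_0$ are standard spherical shells between round boxes, so any two orientation-preserving homeomorphisms of the boundary spheres can be joined across such a shell, using only that $\Homeo^+(S^{d-1})$ is path connected. For the outer shell, note that $f(Q_1)$ is a topological $d$-ball and $B_0'$ is a smoothly, hence bicollared, embedded ball in its interior; the key fact is that then $f(Q_1)\setminus\interior B_0'$ is homeomorphic, rel its outer boundary $\partial f(Q_1)$, to $\partial f(Q_1)\times[0,1]$ with the other end mapped onto $\partial B_0'$. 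Identifying the shell $Q_1\setminus\interior Q_0'$ radially with $\partial Q_1\times[0,1]$ and composing with $(f|_{\partial Q_1})\times\id$ yields the desired outer-shell homeomorphism, equal to $f$ on $\partial Q_1$. I expect this last fact — equivalently, after identifying $f(Q_1)$ with $D^d$ and using the bicollar of $\partial B_0'$, the statement that a tame ball in $\interior D^d$ can be ambient-isotoped onto a round concentric ball rel $\partial D^d$ — to be the main obstacle; together with the path-connectedness of $\Homeo^+(S^{d-1})$ it is a standard consequence of the Annulus Theorem and the stable homeomorphism theorem. This is classical for $d\le 3$, is Kirby's theorem for $d\ge 5$, and is Quinn's theorem for $d=4$, the genuinely hard case, in line with the delicacy of dimension $4$ emphasized above. (For $d\ne 4$ one could alternatively skip the construction entirely and approximate $f$ by a global diffeomorphism, since PL, hence smooth, homeomorphisms are dense in $\Homeo((0,1)^d)$ there.) Assembling these ingredients, $g$ is a somewhere-smooth homeomorphism within $\varepsilon$ of $f$.
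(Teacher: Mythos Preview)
Your argument is correct and rests on the same deep inputs as the paper's, but the packaging differs. The paper splits into cases: for $d\neq 4$ it invokes the \emph{global} density of PL homeomorphisms (Moise for $d=2,3$; Connell--Bing combined with Kirby's stable homeomorphism theorem for $d\ge 5$), obtaining a PL, hence somewhere smooth, $\varepsilon$-approximation outright; for $d=4$ it appeals directly to the Freedman--Quinn smoothing theorem to isotope $f$ rel the complement of a small open set $V$ to something smooth near a point, with the $\varepsilon$-bound coming, as in your argument, from choosing $V$ with $\diam f(V)<\inf_V\varepsilon$. Your parenthetical remark about $d\neq 4$ is exactly the paper's route in those dimensions.

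Your main construction instead gives a single, dimension-uniform local modification: replace $f$ on a small cube $Q_1$ by an affine core glued to $f|_{\partial Q_1}$ across two shells, with the outer gluing supplied by the Annulus Theorem (classical for $d\le 3$, Kirby for $d\ge 5$, Quinn for $d=4$) and the inner gluing by path-connectedness of $\Homeo^+(S^{d-1})$. One point worth making explicit is that $\partial f(Q_1)$ is locally flat in $(0,1)^d$ because it is the image of the locally flat sphere $\partial Q_1$ under the ambient homeomorphism $f$, so the Annulus Theorem applies to the pair $\partial f(Q_1)$, $\partial B_0'$ directly. The trade-off is that the paper's proof is shorter, citing the finished approximation/smoothing theorems, while yours is more self-contained and makes transparent that only the Annulus Theorem (plus elementary isotopy extension on the sphere) is really needed; in dimension $4$ both routes ultimately invoke Quinn.
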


In fact, an easy and elementary argument shows that the statement also holds for $d = 1$, but we will not need this here.

\begin{proof}
First we show that, if $d\ne4$, then the above $g$ can
even be a PL (piecewise affine) homeomorphism
of $(0,1)^d$. 

For $d=2$ see e.g.~\cite[Theorem 8.4]{Moise}, while for $d=3$ see \cite[Theorem 36.1]{Moise}.
For $d \ge 5$, let us fix a PL homeomorphism $\phi \colon  \R^d \to (0, 1)^d$ with 
\begin{equation}
    \label{e:phi is contraction}
    |\phi(x) - \phi(y)| \le |x - y| \text{ for each $x, y \in \R^d$}.
\end{equation}
Let $\widehat{\eps} = \varepsilon \circ \phi$. Then $\widehat{f} = \phi^{-1} \circ f \circ \phi$ is a homeomorphism of $\R^d$. We claim that $\widehat{f}$ can be $\widehat{\eps}$-approximated by a PL homeomorphism $h$, that is, 
\begin{equation}
    \label{e:h approximates f'}
    \left|\widehat{f} (x) - h(x)\right| < \widehat{\eps} (x) \text{ for each $x \in \R^d$}
\end{equation}
for some PL homeomorphism $h \in \Homeo(\R^d)$. It follows from a theorem of Connell and Bing (see e.g.~\cite[Theorem~4.11.1]{Rushing}) that every stable homeomorphism of $\R^d$ can be $\widehat{\eps}$-approximated by a PL homeomorphism, and since every orientation-preserving homeomorphism is stable if $d \ge 5$ by a result of Kirby \cite{Kirby}, we obtain an $\widehat{\eps}$-approximation $\widehat{h}$ of either $\widehat{f} $ or $r \circ \widehat{f}$, where $r$ is a reflection on a hyperplane. Let $h = \widehat{h}$ in the former case, while $h = r \circ \widehat{h}$ in the latter to finish the proof of the claim. Now set $g = \phi \circ h \circ \phi^{-1}$; clearly $g$ is a PL homeomorphism. Using \eqref{e:phi is contraction}, \eqref{e:h approximates f'} and the fact that $f = \phi\circ \widehat{f}  \circ \phi^{-1}$, it is easy to check that $g$ is also an $\varepsilon$-approximation of $f$.

Suppose now that $d=4$. Let $x \in (0, 1)^d$ be arbitrary, and let $V$ be an open neighborhood of $x$ small enough so that 
\begin{equation}
    \label{e:diam of f(V) is small}
    \diam f(V) < \inf\{\varepsilon(y) : y \in V\}.
\end{equation}
By \cite[Theorem 8.1A]{FQ} applied with $M = N = (0,1)^d$ (hence there is no boundary, so the restriction of smoothness on the boundary is vacuous, and the isotopies relative to the boundary are just isotopies), $K =\{x\}$ and $U = V$, we obtain that there exists a somewhere smooth $g \in \Homeo((0,1)^d)$ with $g|_{(0,1)^d\setminus V} = f|_{(0,1)^d\setminus V}$. Therefore $g(V) = f(V)$, and hence $g$ is an $\varepsilon$-approximation of $f$ by \eqref{e:diam of f(V) is small}. \end{proof}

\begin{corollary}
\label{c:approx}
Let $d$ be a positive integer, $f \in \HomeoId$ and $\varepsilon > 0$. Then there exists  a somewhere smooth $g \in \HomeoId$ 
with $g \in B(f, \varepsilon)$ such that $g$ agrees with $f$ on the boundary of $[0,1]^d$.
\end{corollary}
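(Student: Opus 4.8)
The plan is to reduce the closed-cube statement to Theorem~\ref{t:approx} by a soft point-set-topological argument, treating $d=1$ (which Theorem~\ref{t:approx} does not cover) by a trivial separate construction.

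For $d=1$ I would, assuming without loss of generality that $f$ is increasing, choose $0<a<b<1$ with $b-a$ so small that $f(b)-f(a)<\varepsilon$ (possible by uniform continuity of $f$), and let $g$ agree with $f$ on $[0,1]\setminus(a,b)$ and be the affine function joining $(a,f(a))$ to $(b,f(b))$ on $[a,b]$. Then $g$ is an increasing continuous bijection of $[0,1]$, hence a homeomorphism; it is affine, hence smooth, on the non-empty open set $(a,b)$; it agrees with $f$ on $\{0,1\}$; and since both $g$ and $f$ map $[a,b]$ into $[f(a),f(b)]$ we get $\|g-f\|_{\infty}\le f(b)-f(a)<\varepsilon$.

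For $d\ge 2$ I would apply Theorem~\ref{t:approx} to $f|_{(0,1)^d}\in\Homeo((0,1)^d)$ with the weight $\varepsilon'(x)=\min\{\varepsilon/2,\ \dist(x,\partial[0,1]^d)\}$, which is continuous and positive on $(0,1)^d$ and tends to $0$ at the boundary. This produces a somewhere smooth $g_0\in\Homeo((0,1)^d)$, smooth on some non-empty open $W\subseteq(0,1)^d$, with $|g_0(x)-f(x)|<\varepsilon'(x)$ for all $x\in(0,1)^d$. I then set $g=g_0$ on $(0,1)^d$ and $g=f$ on $\partial[0,1]^d$. Continuity of $g$ at a point $x_0\in\partial[0,1]^d$ follows from $|g(x)-f(x_0)|\le\varepsilon'(x)+|f(x)-f(x_0)|\to 0$ as $(0,1)^d\ni x\to x_0$, using $\varepsilon'(x)\to 0$ together with continuity of $f$. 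Since $g_0$ is a self-homeomorphism of $(0,1)^d$ and $f$ maps $\partial[0,1]^d$ bijectively onto itself (invariance of the boundary under homeomorphisms of $[0,1]^d$), the map $g$ sends $(0,1)^d$ bijectively onto $(0,1)^d$ and $\partial[0,1]^d$ bijectively onto $\partial[0,1]^d$; as these sets are disjoint with union $[0,1]^d$, $g$ is a bijection. A continuous bijection of the compact Hausdorff space $[0,1]^d$ is a homeomorphism, so $g\in\HomeoId$; moreover $g$ is smooth on $W$, equals $f$ on $\partial[0,1]^d$, and $\|g-f\|_{\infty}\le\varepsilon/2<\varepsilon$, so $g\in B(f,\varepsilon)$.

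I do not expect a real obstacle here: all the genuine difficulty (including the dependence on hard differential-topology input in dimensions $4$ and $5$) is already contained in Theorem~\ref{t:approx}. The only points needing routine care are that the extension of $g_0$ across the boundary is continuous — which is precisely the reason for letting the approximation error $\varepsilon'$ vanish at $\partial[0,1]^d$ — and that this extension is bijective, which uses that a homeomorphism of $[0,1]^d$ preserves the interior–boundary decomposition.
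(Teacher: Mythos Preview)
Your proposal is correct and follows essentially the same approach as the paper: pick a continuous positive error function on $(0,1)^d$ that tends to $0$ at the boundary, apply Theorem~\ref{t:approx} to $f|_{(0,1)^d}$, and then extend by $f$ on $\partial[0,1]^d$. You supply more detail than the paper (the explicit choice $\varepsilon'(x)=\min\{\varepsilon/2,\dist(x,\partial[0,1]^d)\}$, the continuity and bijectivity checks, and a separate elementary treatment of $d=1$, which the paper only alludes to), but the idea is the same.
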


\begin{proof}
Choose a continuous $\eps \colon (0,1)^d \to (0, \infty)$ with $\varepsilon(x) < \varepsilon$ for every $x \in (0,1)^d$ such that $\varepsilon(x_n) \to 0$ whenever $x_n \to x \in \partial [0,1]^d$. Then obtain a map $g$ by applying Theorem \ref{t:approx} to this function $\varepsilon(\cdot)$ and $f|_{(0, 1)^d}$. Finally, extend this map $g$ to the boundary of $[0,1]^d$ so that $f$ and $g$ agree on the boundary.
\end{proof}

\subsection*{Acknowledgment}
We are indebted to J.~Luukkainen for providing the Appendix. He has actually also shown us that the set of homeomorphisms which are PL on a neighborhood of $\partial [0,1]^4$ is dense in $\Homeo([0,1]^4)$. We are grateful to D.~Nagy for some valuable remarks and helpful suggestions.

\end{document}